\documentclass[12pt,reqno]{amsart}

\usepackage{amsfonts,amsthm,amsmath,amssymb}
\makeatletter
\@ifundefined{subjclassname@2020}{%
  \@namedef{subjclassname@2020}{\textup{2020} Mathematics Subject Classification}%
}{}
\makeatother
\usepackage{hyperref}
\hypersetup{
    pdffitwindow=false,
    pdfstartview={FitH},
    colorlinks=true,
    citecolor=blue,
    linkcolor=red,
    urlcolor=blue
}

\usepackage{indentfirst}
\usepackage{cite}
\usepackage{enumerate}
\usepackage{bm}
\usepackage{txfonts}
\usepackage{booktabs}

\newtheorem{theorem}{Theorem}[section]
\newtheorem{lemma}[theorem]{Lemma}
\newtheorem{proposition}[theorem]{Proposition}

\theoremstyle{definition}
\newtheorem{remark}{Remark}[section]

\DeclareMathOperator{\divg}{div}
\DeclareMathOperator{\supp}{supp}
\DeclareMathOperator{\dist}{dist}
\newcommand{\R}{\mathbb{R}}

\newcommand{\p}{\partial}

\begin{document}
	\title[Stationary Navier-Stokes Solutions with Transverse bound]
	{Rigidity of Stationary Navier-Stokes Solutions with Transverse bound
		in Dimensions Five and Higher}

	\author{Changfeng Gui}
	\address{Department of Mathematics,  Faculty of Science, University of Macau, Taipa, Macao}
	\email{Changfenggui@um.edu.mo}

	\author{Hao Liu}
	\address{Department of Mathematics, Faculty of Science,  University of Macau, Taipa, Macao}
	\email{haoliu@um.edu.mo}

	\author{Yun Wang}
	\address{School of Mathematical Sciences, Soochow University, Suzhou, China}
	\email{ywang3@suda.edu.cn}

	\author{Chunjing Xie}
	\address{School of Mathematical Sciences, 	Ministry of Education Key Laboratory of Scientific and Engineering Computing,
				and CMA-Shanghai, Shanghai Jiao Tong University, 800 Dongchuan Road, Shanghai, China}
	\email{cjxie@sjtu.edu.cn}


\keywords{Navier-Stokes equations, rigidity, transverse bound, removable singularities}
\subjclass[2020]{35B53 (primary); 35B40, 35B65, 35Q30, 76D05 (secondary)}

	\begin{abstract}
For $n\geq5$, let $(u,p)$ be a smooth solution of the stationary
incompressible Navier-Stokes equations in
$\R^n\setminus\{x'=0\}$, where $\{x'=0\}$ is the $x_n$-axis.  We prove
that the scale-invariant transverse bound
$|u(x)|\leq C |x'|^{-1}$ forces $u\equiv0$.  This result relies on a
quantitative local regularity theorem, which shows that the transverse bound
condition yields uniform control of $u$ and $\nabla u$ in a smaller ball and
removes the possible singularity along the axis.
The proof combines weak extension across the axis, approximate Green
functions for an adjoint drift operator, a one-sided bound for the total head
pressure, a localized Frehse-R\r{u}\v{z}i\v{c}ka weighted estimate, and a
finite Stokes-Morrey bootstrap.
Applying the local theorem at arbitrarily large scales yields whole-space
rigidity and removability of line singularities. This together with the
asymptotic expansions on the exterior domains in the  case $|u(x)|\leq C |x|^{-1}$ in \cite{BangGuiLiuWangXie},  yields the same expansion in exterior domains under the condition $|u(x)|\leq C |x'|^{-1}$.
\end{abstract}

    \maketitle


	\section{Introduction and main results}

	\subsection{Background}

	We consider the stationary incompressible Navier-Stokes equations in $\R^n$
	\begin{equation}\label{NS}
		\begin{cases}
			-\Delta u + (u\cdot\nabla)u + \nabla p = 0,\\[2pt]
			\divg u = 0,
		\end{cases}
	\end{equation}
	where $u=(u_1,\dots,u_n)$ is the velocity and $p$ is the pressure.
	System \eqref{NS} possesses a natural \emph{scaling invariance}: if
	$(u,p)$ is a solution, then $(\lambda u(\lambda x),\lambda^2 p(\lambda x))$ is also a solution for
	every $\lambda>0$.  This makes it natural to study scale-invariant solutions or, more generally, solutions satisfying scale-invariant bounds.
	Of particular importance are solutions satisfying the scale-invariant bound
	\begin{equation}\label{isotropic}
		|u(x)|\leq\frac{C}{|x|},\quad x\in\mathbb{R}^n\setminus\{0\},
	\end{equation}
	whose class we denote by $\mathcal{L}^\infty_r$. It contains the 
	rotated self-similar and rotated discretely self-similar solutions in
	$\R^n\setminus\{0\}$; see Tsai~\cite{Tsai} and the references therein for detailed discussions of these types of solutions.

	In dimensions $n=2,3$, there exist nontrivial explicit self-similar solutions.
	In $n=2$, these are Jeffery-Hamel solutions~\cite{Jeffery,Hamel,Sverak}, see
	Guillod-Wittwer~\cite{GuillodWittwer15} for its extension to 	rotated self-similar solutions.
	In $n=3$, Landau~\cite{Landau} discovered $(-1)$-homogeneous axisymmetric solutions
	expressed via the stream function $\psi(r,\theta)=r f(\theta)$ with
	$f(\theta)=\frac{2\sin^2\theta}{a-\cos\theta}$ ($|a|>1$), associated with a point force at the origin;
	\v{S}ver\'{a}k~\cite{Sverak} proved these are the \emph{only} $(-1)$-homogeneous solutions
	in $\R^3\setminus\{0\}$.
	Hence the bound $O(1/|x|)$ is sharp in low dimensions: any rigidity result
	claiming $u\equiv 0$ under~\eqref{isotropic} must fail for $n=2,3$.

	In dimensions $n\geq4$, every smooth $(-1)$-homogeneous solution in
	$\R^n\setminus\{0\}$ is trivial; see
	\v{S}ver\'{a}k~\cite{Sverak} and Tsai~\cite{Tsai}.
The extension of rigidity from self-similar solutions to the full isotropic
class~\eqref{isotropic} was posed by \v{S}ver\'ak~\cite[p.~210]{Sverak}.
Bang, Gui, Liu, Wang, and Xie~\cite{BangGuiLiuWangXie} recently resolved
this isotropic problem for every $n\geq4$. The main result in \cite{BangGuiLiuWangXie}
can be stated as follows.
\begin{theorem}[{\cite[ Theorem 1.1]{BangGuiLiuWangXie}}]\label{thm:BGLWX}
 Let $n\geq4$ and let $u\in C^\infty(\mathbb{R}^n\setminus\{0\})$ be a solution
 to~\eqref{NS} in $\mathbb{R}^n\setminus\{0\}$ satisfying
 $|u(x)|\leq C/|x|$ for some $C>0$. Then $u\equiv0$.
\end{theorem}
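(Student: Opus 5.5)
The plan is to reduce Theorem~\ref{thm:BGLWX} to a scale-invariant local regularity statement near the origin. The bound $|u|\le C/|x|$ places $u\in L^q_{loc}$ for every $q<n$, and in particular $u\in L^2_{loc}$ since $n\ge4>2$. The first step is to extend $(u,p)$ as a weak (distributional) solution across the point $0$. Testing the equation against $\phi\,\eta_\varepsilon$, where $\eta_\varepsilon$ is a cutoff vanishing near $0$, produces error terms of the form
\[
\int |u|^2|\nabla\eta_\varepsilon|\lesssim \varepsilon^{-1}\varepsilon^{n-2}\cdot\varepsilon^{0}\to0
\qquad\text{and}\qquad
\int |u||\nabla^2\eta_\varepsilon|\to0,
\]
because a point has zero capacity when $n\ge3$. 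The pressure is handled the same way: take $p=\sum R_iR_j(u_iu_j)$ locally, modulo harmonic corrections. Together with interior estimates for the Stokes system, this gives $\nabla u\in L^{q}_{loc}$ for $q<n/2$ and $|\nabla u|\lesssim |x|^{-2}$ away from $0$.

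The second step, which is the heart of the argument, is an $\varepsilon$-free local regularity result: a weak solution in $B_2$ with $|u|\le C|x|^{-1}$ is bounded in $B_1$, with a bound depending only on $C$ and $n$. Since $C$ need not be small, the Morrey quantity $r^{2-n}\int_{B_r}|u|^2\sim C^2$ is critical but not small, so a naive bootstrap will not close. I would follow the route announced in the abstract. The first ingredient is the total head pressure $\Pi=\tfrac12|u|^2+p$, which satisfies $-\Delta\Pi+u\cdot\nabla\Pi=-|\omega|^2\le0$; a maximum-principle argument using approximate Green functions of the adjoint drift operator $-\Delta-u\cdot\nabla$ should give a one-sided bound $\Pi\le M$ in $B_{3/2}$. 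The second ingredient is a Frehse--R\r{u}\v{z}i\v{c}ka weighted estimate built on $\int \Pi^{-}$ against the weight $|x-x_0|^{2-n}$, which upgrades this to smallness of $r^{2-n}\int_{B_r(x_0)}|\nabla u|^2$ at some scale. Once that smallness is available, a finite Stokes--Morrey bootstrap (Caffarelli--Kohn--Nirenberg type, here in the stationary setting) yields $u\in L^\infty(B_1)$ together with uniform gradient bounds.

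The third step is a Liouville argument by scaling. For $R>0$ set $u_R(x)=Ru(Rx)$. Each $u_R$ obeys the same bound $|u_R|\le C/|x|$, so the local theorem applied to every $u_R$ gives $|u_R|\le K$ in $B_1$. Hence $u$ is bounded near $0$ and smooth, and the singularity is removed; because the bound is uniform in $R$, we also get $|u(x)|\le K/R$ on $B_R$ for all $R$. Letting $R\to\infty$ forces $u\equiv0$. Alternatively, once $u$ is smooth on all of $\R^n$ one can conclude from the energy identity, using the decay $|u|\le C/|x|$ and $n\ge4$, but the scaling argument is cleaner.

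I expect the main obstacle to be the second step, namely obtaining quantitative regularity from a large critical bound. The one-sided pressure bound needs sign information that is inherited under weak extension across the singular set, and the weighted estimate must make the pressure term $\int \Pi|x|^{2-n}$-type controllable when $n\ge4$, which is precisely where the dimensional restriction enters. My first attempt there would be to take the Green-function weight $|x-x_0|^{2-n}$, truncated at scale $\delta$, as the test function, and to absorb the drift term using $|u||\nabla G|\lesssim C|x|^{-1}|x-x_0|^{1-n}$.
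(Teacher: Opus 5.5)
The paper does not prove this statement; it is quoted from \cite{BangGuiLiuWangXie}. For $n\ge5$ it is a special case of Theorem~\ref{thm:main}, since $|x'|\le|x|$ and $\R^n\setminus\{x'=0\}\subset\R^n\setminus\{0\}$. Your outline (weak extension, approximate Green functions for $-\Delta-u_k\cdot\nabla$, a one-sided head-pressure bound, the Frehse--R\r{u}\v{z}i\v{c}ka weighted estimate, a Stokes--Morrey bootstrap, scaling) is essentially the paper's proof of that theorem, and it only gets easier for a point singularity.

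\textbf{The gap: $n=4$.} Your argument claims this case but does not reach it. In the weighted step the weight is $w_h=(r^2+h^2)^{-(n-4)/2}$, and every coercive term from testing with $\nabla(\eta^2w_h)$ or $\eta^2w_h$ carries the factor $n-4$. Examples are the coefficient $2(n-4)$ of $\int|H|\eta^2(r^2+h^2)^{-(n-2)/2}$ in Lemma~\ref{lem:FR-fixed-scale-H}, and $-\Delta w_h=(n-4)(2r^2+nh^2)(r^2+h^2)^{-n/2}$ in the velocity estimate. At $n=4$ the weight is constant. You then get no bound on $\int|\Pi|\,|x-x_0|^{2-n}$, hence no $L^\infty$ bound for the auxiliary potential and no hole-filling. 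So the threshold of this computation is $n\ge5$, not $n\ge4$ as you assert.

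Your Step~1 is also too weak at $n=4$:
\begin{itemize}
\item $|\nabla u|\lesssim|x|^{-2}$ gives $\nabla u\in L^q$ only for $q<2$;
\item $|u|^2,|\Pi|\lesssim|x|^{-2}$ miss $L^2(B_1)$ by a logarithm.
\end{itemize}
Yet $u\in W^{1,2}\cap L^4$ and $\Pi\in L^2$ are exactly what is used to extend $-\Delta\Pi+u\cdot\nabla\Pi\le0$ across the singularity and to test it with the Green function.

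\textbf{A fix for $n=4$: the energy identity, used twice.} Normalize $p$ so that $|p|\lesssim|x|^{-2}$, by integrating $|\nabla p|\lesssim|x|^{-3}$ along rays to infinity. Test with $u\phi$, $\phi=\eta_R^2\chi_\varepsilon^2$, to get $\int|\nabla u|^2\phi=\frac12\int|u|^2\Delta\phi+\int\Pi\,u\cdot\nabla\phi$. In four dimensions the inner and outer cutoff errors are both $O(1)$, so $\nabla u\in L^2(\R^4)$. The origin is removable for the distributional gradient, since $\int_{A_\varepsilon}|u||\nabla\chi_\varepsilon|\to0$, and $u\to0$ at infinity, so Sobolev gives $u\in L^4(\R^4)$. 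Repeating the identity, the errors on the cutoff annuli $A$ are now bounded by $C(\|u\|_{L^4(A)}^2+\|u\|_{L^4(A)})\to0$. Hence $\nabla u\equiv0$ and $u\equiv0$.

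Your remark that one could ``alternatively'' conclude from the energy identity is therefore correct only for $n=4$. For $n\ge5$ the outer errors grow like $R^{n-4}$, which is why the Green-function machinery is needed there.

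\textbf{Two smaller points.}
\begin{itemize}
\item The critical Morrey quantity for the gradient is $r^{4-n}\int_{B_r}|\nabla u|^2$, not $r^{2-n}\int_{B_r}|\nabla u|^2$.
\item The drift term cannot be absorbed via $|u|\le C|x|^{-1}$, because it is scale-critical with a large constant. The paper instead makes it vanish identically, by testing with functions $\varphi(G)$ and using $\operatorname{div}u_k=0$. This is what gives the $h$-independent bounds in (P3).
\end{itemize}
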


The rigidity problem is closely related to the classical question of
removable singularities for stationary Navier--Stokes flows.
The removability of isolated singularities has been studied extensively;
see, among others, Dyer--Edmunds~\cite{DyerEdmunds70},
Shapiro~\cite{Shapiro74}, and Choe--Kim~\cite{ChoeKim00}.
In particular, Kim and Kozono~\cite{KimKozono06} proved that, for
$n\geq3$, an isolated singularity of a smooth stationary solution is
removable provided
\[
    u\in L^n
    \qquad\text{or}\qquad
    |u(x)|=o(|x|^{-1})
    \quad\text{as }x\to0.
\]
More recently, as an application of Theorem~\ref{thm:BGLWX},
Bang--Gui--Liu--Wang--Xie~\cite{BangGuiLiuWangXie} showed that
in dimensions $n\geq4$ the critical pointwise bound
\[
    |u(x)|=O(|x|^{-1})
\]
already implies removability of an isolated singularity.
Recently, Chen, Wang, Wang, Yang, and Yu~\cite{Chen26}
proved the rigidity of five-dimensional steady Navier--Stokes flows
under the critical Morrey condition
\[
    \sup_{R>0}R^{-2}\int_{B_R}|u|^3\,dx<\infty.
\]
This generalizes the result of
\cite{BangGuiLiuWangXie} in dimension five.
They also obtained corresponding removable-singularity criteria for
isolated singularities under bounded scale-invariant energy or cubic
Morrey control. 

The key quantity used in their approach and in the present work is the
total head pressure
\begin{equation}\label{Hdef}
 H=\frac12|u|^2+p.
\end{equation}
For smooth solutions it satisfies
\begin{equation}\label{Heq}
 -\Delta H+u\cdot\nabla H
 =-\frac12\sum_{i,j}|\partial_i u_j-\partial_j u_i|^2\leq0.
\end{equation}
The one-sided structure of this equation has played a central role
in the regularity theory developed by Frehse-R\r{u}\v{z}i\v{c}ka~\cite{FrehseRuzicka94,
FrehseRuzicka96}, Struwe~\cite{Struwe95}, and Li-Yang~\cite{LiYang22} for the existence of regular solutions.

\subsection{Main results}
	In this paper, we consider solutions that may be singular along a prescribed
axis.  Without loss of generality, we take this axis to be the $x_n$-axis and
consider solutions in
	$\mathbb{R}^n\setminus\{x'=0\}$, where $x=(x',x_n)$ and $\{x'=0\}$ denotes the
	$x_n$-axis.  We impose only the following \emph{transverse} bound condition:
	\begin{equation}\label{transverse}
		|u(x)|\leq\frac{C_1}{|x'|},\quad x\in\mathbb{R}^n\setminus\{x'=0\}.
	\end{equation}
	This is weaker than the $\mathcal L^\infty_r$ bound: it permits
	$|x'|^{-1}$ growth near every point of the axis and imposes no bound
	in the axial direction. This singular geometry is substantially different from the
isolated-point setting discussed above.  Indeed, the transverse bound
alone implies only
\[
    u\in L^q_{\mathrm{loc}}
    \qquad\text{for every }q<n-1,
\]
near the axis. Moreover, the potential singular set is one-dimensional,
rather than an isolated point.

	Our main result is the following.

	\begin{theorem}\label{thm:main}
		Let $n\geq 5$ and let $(u,p)$ be a smooth solution to~\eqref{NS} in
		$\mathbb{R}^n\setminus\{x'=0\}$ satisfying~\eqref{transverse}.  Then $u\equiv 0$.
	\end{theorem}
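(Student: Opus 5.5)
The plan is to reduce Theorem~\ref{thm:main} to a quantitative local regularity statement that is invariant under the scaling $u_\lambda(x)=\lambda u(\lambda x)$, $p_\lambda(x)=\lambda^2p(\lambda x)$, which preserves \eqref{transverse} with the same constant $C_1$. Concretely, I will aim to prove: there are $\varepsilon$-independent constants $M=M(n,C_1)$ such that any smooth solution in $B_2\setminus\{x'=0\}$ with $|u|\le C_1/|x'|$ extends to a smooth solution in $B_1$ with $\sup_{B_{1/2}}(|u|+|\nabla u|)\le M$. Granting this, apply it to $u_R(x)=Ru(Rx)$ on $B_2$. For fixed $x_0$ this gives $R|u(x_0)|=|u_R(x_0/R)|\le M$ and $R^2|\nabla u(x_0)|\le M$ once $R\ge 2|x_0|$, so letting $R\to\infty$ yields $u\equiv0$. (Alternatively, once we know $u$ is smooth across the axis and satisfies $|u(x)|\le M/|x|$ at every point, Theorem~\ref{thm:BGLWX} finishes immediately.)

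For the local theorem, I would proceed in four steps. First, since $n\ge5$, the axis has codimension $n-1\ge4$ and $u\in L^q_{\rm loc}$ for $q<n-1$, in particular $u\in L^3_{\rm loc}$. A cutoff $\eta_\delta(|x'|/\delta)$ argument shows that $(u,p)$ is a weak (indeed suitable) solution across the axis, because the error terms are of size $\delta^{-1}\int_{|x'|<\delta}(|u|^2+|u|^3+|p||u|)\to0$. Here the pressure is recovered from $-\Delta p=\partial_i\partial_j(u_iu_j)$ with $|u|^2\lesssim|x'|^{-2}\in L^{q}$, $q<(n-1)/2$. The second step is a one-sided bound for $H$ from \eqref{Heq}. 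Viewing $H$ as a weak subsolution of $-\Delta H+u\cdot\nabla H\le0$ across the axis, I would test against approximate Green functions $G$ of the adjoint operator $-\Delta G-\divg(uG)=\delta_{x_0}$ (regularized by mollifying $u$), with upper bounds $G\lesssim|x-x_0|^{2-n}$ obtained via the drift bound $|u|\le C_1/|x'|$ in Morrey form. This gives $\sup_{B_{3/2}}H\le C$, with $C$ depending only on averages of $|H|$, which are controlled by $C_1$.

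Third, I would prove a localized Frehse--R\r{u}\v{z}i\v{c}ka weighted estimate. Testing the $H$-inequality with $(C-H)\,|x-x_0|^{2-n}\phi$ uses the upper bound on $H$ to control $\int|\nabla u|^2|x-x_0|^{2-n}$, and hence $|u|^2|x-x_0|^{-n}$, uniformly in $x_0\in B_1$. The result is a Morrey bound $r^{2-n}\int_{B_r(x_0)}|\nabla u|^2\le C$ at all scales. Fourth, I would run a finite bootstrap in Stokes--Morrey spaces: view $u$ as solving Stokes with force $\divg(u\otimes u)$, and improve the Morrey exponent step by step until $u\in L^\infty$. Standard regularity then gives the bounds on $\nabla u$ and smoothness across the axis.

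I expect the main obstacle to be the second step, the one-sided bound for $H$. The drift $u$ is only critical, of size $|x'|^{-1}$, and not small, so Green-function bounds for $-\Delta-\divg(u\,\cdot)$ are delicate. The plan is to exploit that $\divg u=0$ and that $|x'|^{-1}$ lies in a Kato-type class along a line when $n\ge5$, together with a duality argument that avoids needing smallness.
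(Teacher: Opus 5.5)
Your architecture matches the paper's: scaling reduction to a local estimate, weak extension across the axis, adjoint Green functions to bound $H_+$, a Frehse--R\r{u}\v{z}i\v{c}ka weighted estimate, and a Stokes--Morrey bootstrap. Your third step, however, fails as stated.

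Testing the head-pressure relation with $(C-H)|x-x_0|^{2-n}\phi$ cannot control $\nabla u$. Write $K=C-H\ge1$, so that $-\Delta K+u\cdot\nabla K=\frac12|\omega|^2$. Testing with $K\psi$ gives
\[
\frac12\int|\omega|^2K\psi=\int|\nabla H|^2\psi+\frac12\int K^2(-\Delta\psi)-\frac12\int K^2\,u\cdot\nabla\psi .
\]
Bounding the vorticity term therefore requires control of $\int|\nabla H|^2\psi$, of $K(x_0)^2$ (a lower bound on $H$ you do not have), and of $\int K^2|u||x-x_0|^{1-n}$. None of these is available.

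Your target estimates are also wrong. Uniform control of $\int|u|^2|x-x_0|^{-n}$ is false whenever $u(x_0)\neq0$, already for a constant flow on $B_2$. The reason is that $|x-x_0|^{2-n}$ is the Hardy-critical weight. The bound $\int_{B_r}|\nabla u|^2\le Cr^{n-2}$ is two powers better than the critical scaling $r^{n-4}$; it essentially says $u\in BMO$. That is far more than the one-sided bound on $H$ yields directly.

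The paper's mechanism, in the Appendix, runs as follows.
\begin{enumerate}
\item It tests the local energy inequality with $\eta^2w_h$, where $w_h=(r^2+h^2)^{-(n-4)/2}$ is superharmonic for $n\ge5$. This controls $\int|\nabla u|^2r^{4-n}+\int|u|^2r^{2-n}$.
\item It handles the cubic term $\int H\,u\cdot\nabla(\eta^2w_h)$ by writing $H=-\Delta g$ and proving weighted bounds on $\nabla g$.
\item It gets two-sided control of $\int|H|r^{2-n}$ by testing the momentum equation with $\nabla(\eta^2w_h)$. There $D^2w_h$ produces a good-signed $|u\cdot(x-x_0)|^2$ term, and $H_+\in L^q$ with $q>n/2$ enters through $|H|=2H_+-H$.
\item Hole-filling then gives only $\int_{B_\rho(x_0)}|\nabla u|^2|x-x_0|^{4-n}\le C\rho^\beta$.
\end{enumerate}
This small gain $\beta$ over the critical exponent $n-4$ is what drives the finite bootstrap: each step lowers $1/t$ by $\beta/(2(4-\beta))$.

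Your second step rests on a false premise. The function $|x'|^{-1}$ is in no Kato class: for $x$ on the axis, $\int_{B_r(x)}|y'|^{-1}|x-y|^{1-n}\,dy$ diverges logarithmically at $y=x$. The transverse bound is exactly scale-critical with a non-small constant. So pointwise bounds $G\lesssim|x-x_0|^{2-n}$ do not follow perturbatively. You would need the genuinely critical theory of divergence-free drifts (e.g.\ drifts in $BMO^{-1}$), which you do not supply.

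The paper avoids pointwise Green bounds entirely. Testing the weak head-pressure inequality with $G\zeta$ only needs bounds on $G$ and $\nabla G$ on the annulus $B_3\setminus B_2$, uniform in $h,k,x_0$. These come from two ingredients:
\begin{enumerate}
\item the test function $G/(1+G^\alpha)^{1/\alpha}$, which gives uniform $W^{1,q}$ bounds for $q<n/(n-1)$;
\item a Moser iteration on annuli in which, because $\divg u_k=0$, the drift appears only as $\int G^{\lambda+1}\tau\,u_k\cdot\nabla\tau$, so $u_k\in L^q$ with $q>n/2$ suffices.
\end{enumerate}
The $h$-dependent bounds (P2) are used only to let $k\to\infty$.

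Your first step also needs repair for $n=5$. The pointwise bounds make the cutoff errors in the energy inequality only $O(\delta^{n-5})=O(1)$. Moreover, $|u|^2\lesssim|x'|^{-2}\in L^q$ with $q<2$ does not give $p\in L^2$. The paper uses the transverse Hardy inequality to get $u\in L^4$, hence $p,H\in L^2$, which both later steps need. It then obtains the energy identity by density in the weak formulation rather than by cutoff.
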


	\begin{remark}\label{rem:Landau4D}
The dimension restriction in Theorem~\ref{thm:main} is sharp.  Let
$(u_3,p_3)$ be a Landau solution~\cite{Landau} in
$\R^3\setminus\{0\}$.  Since $|x|^{-1}\le |x'|^{-1}$, the Landau
solution itself gives a three-dimensional counterexample after restriction
to the complement of the axis.  Extending it trivially in $x_4$-direction gives a four-dimensional solution with a singular line and with
$|u(x)|\lesssim |x'|^{-1}$.  More explicitly, set
\[
 u(x_1,x_2,x_3,x_4)=(u_3(x_1,x_2,x_3),0),\quad
 p(x_1,x_2,x_3,x_4)=p_3(x_1,x_2,x_3).
\]
Then $(u,p)$ solves~\eqref{NS} in
$\R^4\setminus\{x'=0\}$, where $x'=(x_1,x_2,x_3)$, and
$|u(x)|\lesssim |x'|^{-1}$.  Thus the transverse rigidity statement fails
in dimensions three and four.
\end{remark}

	The  key to prove Theorem~\ref{thm:main} is the following local quantitative regularity.
	We write $B_r=\{x\in\R^n:|x|<r\}$.

	\begin{theorem}[Quantitative local regularity]\label{thm:local-regularity}
		Let $n\geq5$ and let $(u,p)$ be a smooth solution of~\eqref{NS} in
		$B_1\setminus\{x'=0\}$.  If
		\[
		|u(x)|\leq \frac{M}{|x'|},
		\quad x\in B_1\setminus\{x'=0\},
		\]
		then $(u,p)$ extends smoothly across the axis to a solution in
		$B_{1/128}$.  Moreover,
		\begin{equation}\label{eq:local-quantitative-bound}
		\|u\|_{L^\infty(B_{1/128})}
		+\|\nabla u\|_{L^\infty(B_{1/128})}
		\leq C(n,M).
		\end{equation}
	\end{theorem}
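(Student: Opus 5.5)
The plan follows the outline in the abstract. We first extend $(u,p)$ weakly across the axis, then control the total head pressure $H$ from one side, and finally bootstrap to smoothness.

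\emph{Step 1: weak extension.} Since $|u|\le M|x'|^{-1}$ and $n-1\ge4$, we have $u\in L^q(B_{1/2})$ for all $q<n-1$; in particular $u\in L^3$ and $u\otimes u\in L^{q/2}$ with $q/2>3/2$ when $n\ge5$. Interior Stokes/Caccioppoli estimates on Whitney-type cylinders $\{|x'|\sim\rho\}$ give $|\nabla u|\lesssim M(1+M)\rho^{-2}$, and the pressure is recovered from $\Delta p=-\partial_i\partial_j(u_iu_j)$ up to a harmonic part. A line has codimension $n-1\ge4$, so we can use cut-offs $\eta_\epsilon(x')$ equal to $0$ for $|x'|<\epsilon$ and $1$ for $|x'|>2\epsilon$, with $|\nabla\eta_\epsilon|\lesssim\epsilon^{-1}$. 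The error terms are then $\int|u|^2|\nabla\eta_\epsilon|\lesssim\epsilon^{-3}\epsilon^{n-1}\to0$, and similarly for the pressure and viscous terms. Hence $(u,p)$ is a distributional solution in $B_{1/2}$, $\nabla u\in L^2_{loc}$ (since $\int\rho^{-4}\rho^{n-2}d\rho<\infty$ for $n\ge5$), and the local energy equality holds across the axis. This is exactly where $n\ge5$ enters.

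\emph{Step 2: one-sided bound for $H$.} The key is an upper bound $H\le C(n,M)$ in $B_{1/8}$. We would first show that \eqref{Heq} holds weakly across the axis as the inequality $-\Delta H+u\cdot\nabla H\le0$, again by the capacity argument. The natural tool is a mean-value inequality for subsolutions of $-\Delta+u\cdot\nabla$ with a drift that is only in $L^{q}$, $q<n-1$, which is subcritical. To get it, build approximate Green functions $G_\delta$ for the adjoint operator $-\Delta-\divg(u\,\cdot)$ with pole at $x_0$ and mollified drift $u_\delta$. Then test the inequality for $H$ against $G_\delta\phi$. This bounds $H(x_0)$ by $\int|H||\nabla\phi|(|\nabla G|+|u|G)$ over an annulus. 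That integral is controlled because $p\in L^{q/2}$ and $|u|^2\in L^{q/2}$ away from the pole.

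The pointwise Green bounds $G_\delta\lesssim|x-x_0|^{2-n}$ uniformly in $\delta$ must survive a drift that is singular along a line through, or near, $x_0$. I expect this to be the main obstacle. My first attempt would exploit $\divg u=0$, which makes $\int G_\delta$ sign-controlled, together with a Moser iteration in the weighted space $|x'|^{-1}$. Here the anisotropy of the singularity should give $L^{q}$ integrability with $q>n/2$ in the relevant scales because $n\ge5$.

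\emph{Step 3: Frehse--R\r{u}\v{z}i\v{c}ka estimate and bootstrap.} With $H\le C$ we have $-p\ge\frac12|u|^2-C$. Testing the momentum equation against $u\,\phi\,|x-x_0|^{2-n}$-type weights (the localized Frehse--R\r{u}\v{z}i\v{c}ka argument) makes the cubic term sign-favorable. This yields the uniform Morrey bound
\[
\sup_{x_0\in B_{1/16},\,r<1/16} r^{2-n}\int_{B_r(x_0)}|\nabla u|^2+r^{-n}\!\int_{B_r(x_0)}\big(|u|^2+(H-C)_-\big)\le C(n,M),
\]
uniformly in $x_0$, including points on the axis. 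From there, a finite number of Stokes--Morrey iterations on nested balls (halving the radius at most about $\log_2 16$ more times to reach $1/128$) improves the exponents. The iteration goes $M^{2,n-2}\to L^{p}$ with $p>n$ for $u$ $\to$ Hölder $\to$ $C^{1,\alpha}$, via the Stokes resolvent estimates applied to $\divg(u\otimes u)$. Each step gains a fixed amount of integrability because the Morrey exponent is strictly subcritical in a bounded number of steps. The outcome is \eqref{eq:local-quantitative-bound} and smoothness of the weak solution in $B_{1/128}$, which coincides with $(u,p)$ off the axis.
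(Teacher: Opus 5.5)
Your outline follows the same architecture as the paper: extension across the axis, approximate Green functions for $-\Delta-u\cdot\nabla$, testing the head-pressure inequality with $G\phi$ so that only annular terms survive, a Frehse--R\r{u}\v{z}i\v{c}ka weighted estimate, and a Morrey bootstrap. However, the key estimate in Step~2 is never proved, and you aim it at the wrong target. You do not need pointwise bounds $G_\delta\lesssim|x-x_0|^{2-n}$, and you have no mechanism for them. What you need are bounds on $G$ and $\nabla G$ on the annulus where $\nabla\phi\neq0$, uniform in all regularizations. That annulus still meets the axis, so ``away from the pole'' does not mean ``away from the singular drift''.

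The paper gets these bounds without any pointwise information:
\begin{enumerate}
\item Testing with $G(1+G^\alpha)^{-1/\alpha}$ kills the drift term by $\divg u_k=0$. This gives $\int|\nabla G|^2(1+G^\alpha)^{-1-1/\alpha}\le 1$, hence uniform bounds in $L^\beta$ for $\beta<\frac{n}{n-2}$.
\item On the annulus, testing with $G^\lambda\tau^2$ turns the drift term into $\int G^{\lambda+1}\tau\,u_k\cdot\nabla\tau$. So $u_k\in L^q$ for one $q\in(\frac n2,n-1)$ drives a Moser bootstrap up to $L^4$.
\item Calder\'on--Zygmund for $-\Delta G=\divg(u_kG)$ then gives $\nabla G\in L^2$ there.
\end{enumerate}
You also never address the mollification error $\int H(u-u_\delta)\cdot\nabla G_\delta\,\phi$. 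It involves $\nabla G_\delta$ on all of $\supp\phi$, including near the pole and near the axis. The paper uses a smeared source $\delta_{h,x_0}$ of fixed width $h$, for which $\|\nabla G\|_{L^4(B_4)}\le C(h)$ uniformly in $k$. The error is then at most $C(h)\|H\|_{L^2}\|u-u_k\|_{L^4}\to0$ as $k\to\infty$. Only afterwards is $h\to0$, using that $H$ is smooth near $x_0\notin\{x'=0\}$.

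Two further steps are wrong as written.

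\emph{Step~1 at $n=5$.} For $n=5$ the pointwise bound gives only $|\nabla u|^2\lesssim|x'|^{-4}$, and $\int_0\rho^{-4}\rho^{n-2}\,d\rho=\int_0\rho^{-1}\,d\rho$ diverges. Likewise $|u|^4$, $|p|^2$ and $|H|^2$ are not integrable from pointwise bounds alone. So the borderline dimension is exactly where your naive argument fails. The paper proceeds as follows:
\begin{enumerate}
\item $\nabla u\in L^2$ comes from the cut-off energy identity, whose shell errors are $O(\varepsilon^{n-5})$.
\item $u\in L^4$ comes from the transverse Hardy inequality.
\item $p\in L^2$ comes from a local pressure decomposition, hence $H\in L^2$ and $uH\in L^{4/3}$.
\end{enumerate}
These are precisely what let the head-pressure inequality pass to the limit across the axis when $n=5$. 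There $\|\Delta\chi_\varepsilon\|_{L^2}$ and $\|\nabla\chi_\varepsilon\|_{L^4}$ stay bounded without vanishing, so absolute continuity of the integrals is needed.

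\emph{Step~3 scaling.} Your Morrey bound has the wrong scaling. The condition $r^{-n}\int_{B_r(x_0)}|u|^2\le C$ for all small $r$ already forces $u\in L^\infty$ by Lebesgue differentiation, so it is the conclusion itself. The scale-invariant weights are $|x-x_0|^{4-n}$ on $|\nabla u|^2$, and $|x-x_0|^{2-n}$ on $|u|^2$ and $|H|$. The auxiliary potential $-\Delta g=H$ is needed to treat $\int Hu\cdot\nabla\varphi$. Hole-filling then yields only $\int_{B_\rho}|\nabla u|^2\le C\rho^{n-4+\beta}$ for some small $\beta>0$. The bootstrap must therefore start just above criticality, in $L^{t,n-4+\beta}$. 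Each step improves $1/t$ by $\beta/(2(4-\beta))$, until $t>4-\beta$. That gives H\"older continuity and then the $C^1$ bound.
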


\begin{remark}
A related removable-singularity problem for rays was studied by
Li, Li, and Yan~\cite{LiLiYan25}, who established a sharp criterion for local
$(-1)$-homogeneous solutions near a singular ray.
\end{remark}
\begin{remark}[Working-scale form]\label{rem:B8-working-scale}
For the proof it is convenient to work on the larger fixed ball $B_8$.
More precisely, we shall first prove that if $(u,p)$ is a smooth solution
of~\eqref{NS} in $B_8\setminus\{x'=0\}$ satisfying
\[
 |u(x)|\le \frac{M}{|x'|},
 \qquad x\in B_8\setminus\{x'=0\},
\]
then $(u,p)$ extends smoothly to $B_{1/16}$ and
\begin{equation}\label{eq:B8-working-scale}
\|u\|_{L^\infty(B_{1/16})}
+\|\nabla u\|_{L^\infty(B_{1/16})}
\le C(n,M).
\end{equation}
Theorem~\ref{thm:local-regularity} follows immediately from this
working-scale estimate by the fixed Navier--Stokes scaling.
\end{remark}

\subsection{Applications of main results}
The quantitative local theorem is the main technical  result of the paper.
It has two further consequences, proved in Section~\ref{sec:applications}.
In both applications, the constants in the local estimate are uniform
under translations along the axis and the scaling of the Navier-Stokes equations.

First, it gives a removable-singularity theorem along a portion of a line.
If $\Sigma$ is contained in the $x_n$-axis, $(u,p)$ is smooth away from
$\Sigma$, and locally
\[
 |u(x)|\leq \frac{C}{\operatorname{dist}(x,\Sigma)},
\]
then, for $n\geq5$, the pair $(u,p)$ extends smoothly across $\Sigma$.

Second, the same argument applies in exterior domains. If
$\Omega\subset\mathbb R^n$ is an exterior domain and a solution smooth
in $\Omega\setminus\{x'=0\}$ satisfies $|u(x)|\leq C_1|x'|^{-1}$ for sufficiently large $|x|$, the
uniform local estimates upgrade this to the isotropic bound
\[
 |u(x)|\leq \frac{C}{|x|}
 \quad\text{for all sufficiently large }|x|.
\]
The exterior asymptotic expansions in 
Bang-Gui-Liu-Wang-Xie~\cite{BangGuiLiuWangXie} then yield
\[
 u_i(x)=b_jG_{ij}(x)+O(|x|^{1-n}),
 \quad |x|\to\infty,
\]
where $G_{ij}$ is the Stokes Green tensor and $b$ is the conserved flux of
the Navier-Stokes stress tensor through a sufficiently large sphere.
See Theorem~\ref{thm:exterior-optimal-decay} for the detailed statement.

\subsection{Key ideas and outline of the proof}
The central difficulty is that the formal inequality
$-\Delta H+u\cdot\nabla H\leq0$ cannot initially be used across the
possibly singular axis.  The proof is organized around the following steps.

\emph{Step 1: scale-invariant local control.}  The transverse bound gives
pointwise estimates for derivatives of $u$ and for a suitably normalized
pressure.  A  cutoff argument then yields finite local energy, weak
extension of the Navier-Stokes equations across the axis, and the local
energy identity.

\emph{Step 2: total-head estimate.}  Following Frehse and
R\r{u}\v{z}i\v{c}ka~\cite{FrehseRuzicka98}, we construct approximate Green
functions for the adjoint drift operator
$-\Delta-u_k\cdot\nabla$.  Testing the equation of the  total-head pressure with a
localized Green function yields the  bound
\begin{equation}\label{Hbound-intro}
 H_+\in L^\infty(B_1),
\end{equation}
with norm controlled only by $n$ and the constant in the transverse bound.

\emph{Step 3: local regularity.}  A localized Frehse-R\r{u}\v{z}i\v{c}ka
estimate converts \eqref{Hbound-intro} into Morrey bound for $\nabla u$.
A finite Stokes-Morrey bootstrap then yields the working-scale estimate
\eqref{eq:B8-working-scale}. A fixed rescaling gives
\eqref{eq:local-quantitative-bound} and the smooth extension across the
axis, proving Theorem~\ref{thm:local-regularity}.

\emph{Step 4: global and geometric consequences.}  For a whole-space
solution the local theorem can be applied at arbitrarily large scales.  Using
the $B_8$-scale form proved below, for each fixed $x$ one obtains
$|u(x)|\leq C/R$ whenever $R>16|x|$; letting $R\to\infty$ gives $u(x)=0$.
Translating and rescaling the local theorem gives the removable singularity on the axis, while the same estimate near the two poles of large spheres yields
the exterior decay needed for the asymptotic expansion.

\subsection{Organization of the paper}
Section~\ref{sec:estimates} develops the local derivative and pressure
bounds, the weak extension across the axis, and the local energy identity.
The borderline dimension $n=5$ requires particular care in the transverse
Hardy estimate and in working with the equation of the total-head pressure across the axis.
Section~\ref{sec:Green} constructs the adjoint Green function and records the
estimates uniform in the regularization parameters.  Section~\ref{sec:proof-main}
proves the quantitative local theorem.  Section~\ref{sec:proof} then deduces whole-space rigidity
by scaling.  The removable-singularity and exterior results are collected in
Section~\ref{sec:applications}.  The localized weighted estimate used in the
regularity step is proved in Appendix~\ref{sec:appendix}.

\subsection{Notation}
	Throughout the paper, $C$ denotes a generic positive constant that may change from
	line to line. In the local argument it depends only on $n$ and $M$;
	for whole-space and exterior solutions it depends only on $n$ and $C_1$,
	unless explicitly stated otherwise. Constants in asymptotic remainders
	may also depend on the solution on a fixed bounded annulus.
	Repeated indices are summed from $1$ to $n$, and $f_+:=\max\{f,0\}$.
	$B_r(x)$ denotes the open ball of radius $r$ centered at $x$; when the center is
	omitted, $B_r$ means $B_r(0)$.
	We write $x=(x',x_n)\in\mathbb{R}^n$ with $x'\in\mathbb{R}^{n-1}$, so that
	$|x'|=\operatorname{dist}(x,\{x'=0\})$ is the transverse distance to the $x_n$-axis.

	\section{Preliminary local estimates}
\label{sec:estimates}

By Remark~\ref{rem:B8-working-scale}, it suffices to establish the
working-scale estimate \eqref{eq:B8-working-scale}. Accordingly, throughout
the local argument below we work with a solution in
$B_8\setminus\{x'=0\}$ satisfying
\[
 |u(x)|\le M|x'|^{-1}.
\]

    \subsection{Derivative and pressure estimates}
	We first record the  estimates of derivatives and  pressure  needed below. They follow from scale-invariant interior regularity for the
	stationary Navier-Stokes system; see, for example,
	\cite{Galdi,SverakTsai00} for the  interior Stokes estimates.

\begin{lemma}[Local normalization of the pressure]\label{lem:local-pressure}
Let $n\geq3$. Suppose that $(u,p)$ is a smooth solution of \eqref{NS} in $B_8\setminus\{x'=0\}$ and $|u(x)|\le M|x'|^{-1}$. Then there is $c\in\mathbb R$ such that
\begin{equation}\label{eq:local-pressure-bound}
 |p(x)-c|\le C(n,M)|x'|^{-2},\quad x\in B_5\setminus\{x'=0\}.
\end{equation}
Moreover, for every integer $k\ge0$,
\begin{equation}\label{eq:local-derivative-bound}
 |\nabla^k u(x)|\le C(k,n,M)|x'|^{-k-1},\quad
 |\nabla^{k+1}p(x)|\le C(k,n,M)|x'|^{-k-3}
\end{equation}
in $B_6\setminus\{x'=0\}$.
\end{lemma}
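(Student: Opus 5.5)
The plan is a scaling argument on balls adapted to the distance to the axis. Fix $x_0\in B_6\setminus\{x'=0\}$ and set $r=|x_0'|/2$. Then $B_{r}(x_0)\subset B_8\setminus\{x'=0\}$, because $|x_0|<6$ and $r<3$. On this ball we have $|x'|\ge r$, so $|u|\le M/r$. Define the rescaled pair
\[
v(y)=r\,u(x_0+ry),\qquad q(y)=r^2 p(x_0+ry),\qquad y\in B_1.
\]
It solves \eqref{NS} in $B_1$ with $\|v\|_{L^\infty(B_1)}\le M$. We then invoke standard interior regularity for stationary Navier--Stokes with bounded velocity; see \cite{Galdi,SverakTsai00}. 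One writes the system as a Stokes system with right-hand side $\divg(v\otimes v)$ and combines Stokes estimates with a finite bootstrap, first in $W^{1,s}$ and Hölder spaces, then in $C^{k,\alpha}$. This gives $|\nabla^k v|\le C(k,n,M)$ on $B_{1/2}$ for all $k\ge0$. Since the pressure is determined only up to a constant, one also gets $|\nabla^{k+1}q|\le C(k,n,M)$ on $B_{1/2}$. This follows, for instance, from $\nabla q=\Delta v-(v\cdot\nabla)v$ together with the derivative bounds on $v$. Scaling back at $y=0$ gives \eqref{eq:local-derivative-bound}, since $\nabla^k u(x_0)=r^{-k-1}\nabla^k v(0)$ and $\nabla^{k+1}p(x_0)=r^{-k-3}\nabla^{k+1}q(0)$.

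For the pressure normalization \eqref{eq:local-pressure-bound}, we integrate the gradient bound $|\nabla p(x)|\le C|x'|^{-3}$ along paths that stay away from the axis. Choose $c=p(e)$ at a reference point $e\in B_5$ with $|e'|=1$, say $e=e_1$. For $x\in B_5\setminus\{x'=0\}$ we join $x$ to $e$ in two stages, keeping inside $B_6\setminus\{x'=0\}$.

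First we move radially in the transverse direction. The path is $t\mapsto(tx'/|x'|,x_n)$ for $t$ running from $|x'|$ up to about $1$, adjusting so as to stay in $B_6$. Along it $|\nabla p|\le Ct^{-3}$, so the variation of $p$ is at most $C\int_{|x'|}^{1}t^{-3}\,dt\le C|x'|^{-2}$. If $|x'|$ is already $\gtrsim1$, this stage is skipped. Geometrically, when $|x_n|$ is close to $5$ one may need to take a transverse level $\rho\sim 1/2$ rather than $1$ to stay in $B_6$; this is harmless.

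Second, at transverse distance $\rho\in[1/2,1]$, we move within the region $\{|x'|\ge1/2\}\cap B_6$ to $e$. This region is connected because $n-1\ge2$, so the sphere $\{|x'|=\rho\}$ is connected. The path has bounded length and $|\nabla p|\le C$ along it, so this stage contributes $O(1)\le C|x'|^{-2}$, using $|x'|\le5$.

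Summing the two stages gives $|p(x)-c|\le C(n,M)|x'|^{-2}$.

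The only delicate point is the bookkeeping of the geometry: keeping all scaled balls inside $B_8\setminus\{x'=0\}$ and all paths inside $B_6$. The margins $8\to6\to5$ exist precisely to handle this. The interior regularity step is standard, since the scaled velocity is bounded, and it carries no dependence on the solution beyond $n$ and $M$.
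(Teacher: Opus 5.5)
Your argument follows the paper's proof. You rescale on balls of radius comparable to $|x_0'|$ and invoke bounded-velocity interior Stokes regularity. You then integrate $|\nabla p|\lesssim |x'|^{-3}$ along a radial transverse leg followed by a bounded-length leg, using connectedness of $S^{n-2}$ for $n\ge3$. Reading off $\nabla q=\Delta v-(v\cdot\nabla)v$ is a clean way to get the pressure-gradient bounds.

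\emph{The radius choice is wrong as stated.} With $r=|x_0'|/2$, the inclusion $B_r(x_0)\subset B_8\setminus\{x'=0\}$ fails. The facts $|x_0|<6$ and $r<3$ only give $B_r(x_0)\subset B_9$. For example, $x_0=(5.9,0,\dots,0)\in B_6$ gives $r=2.95$, and $B_r(x_0)$ contains points with $|x|\approx 8.85$, where the equation is not assumed to hold. Taking $r=|x_0'|/4$ repairs this, since then $|x_0|+r<7.5$; the paper uses $r=|x_0'|/8$. The constants change only by powers of $2$.

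\emph{The second path stage needs an explicit path.} Connectedness of $\{|x'|\ge 1/2\}\cap B_6$ does not by itself bound path lengths. You need an explicit path of uniformly bounded length. Moreover, when $|x'|>1$ you skip the first stage, so the point is not at a transverse level $\rho\in[1/2,1]$ as your description of the second stage assumes. Use the paper's three legs:
\begin{enumerate}
\item move radially in $x'$ to $|x'|=1$;
\item move along a great-circle arc of length at most $\pi$ in $x'$, at fixed $x_n$, to $(e_1,x_n)$;
\item slide $x_n$ to $0$.
\end{enumerate}
All three legs stay in $B_6\setminus\{x'=0\}$, because the distance to the origin along them is at most $\max\{|x|,\sqrt{1+x_n^2}\}<\sqrt{26}<6$. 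In particular, your adjustment to a transverse level $\rho\sim 1/2$ when $|x_n|$ is near $5$ is unnecessary.
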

\begin{proof}
Fix $x_0\in B_6\setminus\{x'=0\}$ and put $r=|x^{\prime}_0|/8$. Then $B_{2r}(x_0)\Subset B_8\setminus\{x'=0\}$. For $v(y)=r u(x_0+ry)$ and $P(y)=r^2p(x_0+ry)$, the transverse bound gives $\|v\|_{L^\infty(B_2)}\le C M$. Interior Stokes estimates applied first to
$-\Delta v+\nabla P=-\operatorname{div}(v\otimes v)$ give local
$W^{1,q}$ bounds for $v$ and $L^q$ bounds for $P$ modulo constants for
 $1<q<\infty$. Since $v$ is bounded, another application to
$-\Delta v+\nabla P=-(v\cdot\nabla)v$, followed by differentiation
and  bootstrapping, yields
$|\nabla^k v(0)|+|\nabla^{k+1}P(0)|\le C(k,n,M)$.
Scaling back proves \eqref{eq:local-derivative-bound}; in particular,
\begin{equation}\label{eq:local-gradp}
 |\nabla p(x)|\le C(n,M)|x'|^{-3}.
\end{equation}
Let $e_1=(1,0,\ldots,0)\in\mathbb R^{n-1}$ and set $c=p(e_1,0)$. For every $x\in B_5\setminus\{x'=0\}$, let $\rho = |x'|$, $\theta = \frac{x'}{\rho}$. Then $x=(\rho\theta,x_n)$. One can connect $x$ with $e_1=(1,0,\ldots,0)$ through the following paths: first to $(\theta,x_n)$ radially, then to $(e_1,x_n)$ along the circle on $S^{n-2}$, and finally to $(e_1,0)$ along the $x_n$-direction. All three paths lie in $B_6\setminus\{x'=0\}$, along them, the distance to the origin is at most $\max\{|x|,\sqrt{1+x_n^2}\}<\sqrt{26}<6$. Since $n\ge3$, $S^{n-2}$ is connected and the spherical curve can be chosen with length at most $\pi$. By \eqref{eq:local-gradp}, the three pressure variations are bounded respectively by
\[
 C\left|\int_\rho^1s^{-3}\,ds\right|,\quad C\pi,\quad C|x_n|.
\]
Thus $|p(x)-c|\le C(\rho^{-2}+1)\le C\rho^{-2}$ for $0<\rho<5$, which proves \eqref{eq:local-pressure-bound}.
\end{proof}

In all subsequent arguments we subtract the constant  as in
Lemma~\ref{lem:local-pressure}.  The notation
$H=\frac12|u|^2+p$ always refers to this normalized pressure.

\subsection{Weak extension and the local energy identity}

\begin{lemma}[Weak extension across the axis]\label{lem:weak-extension}
Let $n\geq5$ and let $(u,p)$ satisfy the hypotheses of
Lemma~\ref{lem:local-pressure}.  After replacing $p$ by $p-c$, one has
\begin{equation}\label{eq:uniform-local-norms}
 \|u\|_{W^{1,2}(B_4)}+\|u\|_{L^4(B_4)}
 +\|p\|_{L^2(B_4)}+\|p\|_{W^{1,n/(n-1)}(B_4)}
 +\|H\|_{L^2(B_4)}\leq C(n,M).
\end{equation}
For every $1\leq q<n-1$ one also has
\begin{equation}\label{eq:uniform-local-Lq}
 \|u\|_{L^q(B_4)}\leq C(n,M,q).
\end{equation}
Moreover, $(u,p)$ is a distributional solution of~\eqref{NS} in $B_4$.
The local energy identity holds for every $\varphi\in C_c^\infty(B_4)$, namely
\begin{equation}\label{eq:local-energy-equality}
 \int_{B_4}\nabla u:\nabla(u\varphi)\,dx
 =\int_{B_4}H\,u\cdot\nabla\varphi\,dx.
\end{equation}
Consequently, the local Navier-Stokes inequality \eqref{eq:FR-local-NS-inequality} holds for every nonnegative $\varphi\in C_c^\infty(B_4)$.
\end{lemma}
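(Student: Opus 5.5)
The plan is to obtain every bound in \eqref{eq:uniform-local-norms}--\eqref{eq:uniform-local-Lq} by integrating the pointwise estimates of Lemma~\ref{lem:local-pressure} in cylindrical coordinates around the axis. After that, the singular axis is removed by a capacity-type cutoff, and the dimension condition is used exactly where it makes the cutoff errors vanish.

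\textbf{Step 1: integrability.} On $B_4$ we have
\[
\int_{B_4}|x'|^{-s}\,dx\le C\int_{-4}^{4}\int_0^4 \rho^{\,n-2-s}\,d\rho\,dx_n<\infty
\quad\text{iff}\quad s<n-1 .
\]
Combined with Lemma~\ref{lem:local-pressure}, this gives the following bounds.
\begin{itemize}
\item $|u|\le M|x'|^{-1}$ gives \eqref{eq:uniform-local-Lq} for $q<n-1$, and also $L^4$ since $n\ge5$.
\item $|\nabla u|\le C|x'|^{-2}$ lies in $L^2$ because $4<n-1$.
\item $|p|\le C|x'|^{-2}$ lies in $L^2$ for the same reason.
\item $|\nabla p|\le C|x'|^{-3}$ lies in $L^{n/(n-1)}$ because $3n/(n-1)<n-1$ for $n\ge5$.
\item $H$ is bounded by $C|x'|^{-2}$, hence lies in $L^2$.
\end{itemize}
These are bounds for the pointwise functions on $B_4\setminus\{x'=0\}$. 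Since the axis has measure zero, we regard them as functions on $B_4$. It remains to identify the pointwise gradients with the distributional ones across the axis. We do this with the cutoff $\eta_\epsilon(x)=\eta(|x'|/\epsilon)$, where $\eta=0$ on $[0,1]$ and $\eta=1$ on $[2,\infty)$. For a test function $\varphi$,
\[
\int u\,\partial_i(\varphi\eta_\epsilon)=-\int \partial_iu\,\varphi\eta_\epsilon .
\]
The error term is controlled by $\epsilon^{-1}\int_{\epsilon<|x'|<2\epsilon}|u|\lesssim \epsilon^{-2}\epsilon^{n-1}\to0$. The same argument works for $p$: its error is $\lesssim\epsilon^{-3}\epsilon^{n-1}\to0$.

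\textbf{Step 2: the distributional equations.} We test \eqref{NS} against $\psi\eta_\epsilon$ with $\psi\in C_c^\infty(B_4)$ and let $\epsilon\to0$. The commutator terms are bounded as follows.
\begin{itemize}
\item $\int|\nabla u||\nabla\eta_\epsilon|\lesssim \epsilon^{-3}\epsilon^{n-1}$.
\item $\int|u|^2|\nabla\eta_\epsilon|\lesssim\epsilon^{-3}\epsilon^{n-1}$, for the convective term written in divergence form $\divg(u\otimes u)$.
\item $\int|p||\nabla\eta_\epsilon|\lesssim \epsilon^{-3}\epsilon^{n-1}$.
\item $\int u\cdot\nabla\eta_\epsilon$, for $\divg u=0$, which is even smaller.
\end{itemize}
All of these vanish since $n\ge5$. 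The remaining terms converge by dominated convergence, using the $L^1_{\rm loc}$ bounds from Step 1.

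\textbf{Step 3: the energy identity.} Away from the axis, smoothness gives the pointwise identity
\[
\nabla u:\nabla(u\phi)=\divg(\cdots)+H\,u\cdot\nabla\phi ,
\]
obtained by multiplying the momentum equation by $u\phi$ and using $(u\cdot\nabla)u\cdot u=u\cdot\nabla\tfrac12|u|^2$ together with $\divg u=0$. We apply it with $\phi=\varphi\eta_\epsilon$. This gives \eqref{eq:local-energy-equality} up to the error terms
\[
\int |\nabla u||u||\nabla\eta_\epsilon|+\int|H||u||\nabla\eta_\epsilon|\lesssim \epsilon^{-1}\epsilon^{-3}\epsilon^{n-1}=\epsilon^{n-5}.
\]
\textbf{This is the main obstacle.} For $n>5$ the error tends to zero. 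For $n=5$ it is only $O(1)$, so a finer argument is needed there, matching the paper's remark about the borderline case.

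The first thing to try for $n=5$ is a logarithmic cutoff $\eta_\epsilon(x)=\zeta(\log|x'|/\log\epsilon)$. Its gradient is bounded by $C/(|x'|\,|\log\epsilon|)$ and is supported on $\epsilon^2<|x'|<\epsilon$. The error then becomes
\[
|\log\epsilon|^{-1}\int_{\epsilon^2<|x'|<\epsilon}|x'|^{-4}\,dx\lesssim |\log\epsilon|^{-1}\int_{\epsilon^2}^{\epsilon}\rho^{-1}\,d\rho\lesssim|\log\epsilon|^{-1}\cdot|\log \epsilon|=O(1),
\]
which is still borderline. To close the argument, the plan is to exploit cancellation in the flux term rather than absolute values. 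Writing the error as
\[
\int\big(\nabla u\cdot u-Hu\big)\cdot\nabla\eta_\epsilon\,\varphi ,
\]
we average over the transverse sphere. The quantity $\int_{|x'|=\rho}(Hu\cdot\tfrac{x'}{\rho}-\partial_\rho u\cdot u)\,d\sigma$ is the flux of a divergence-free vector field (up to the source terms $|\nabla u|^2$ and the $\partial_n$ terms, which are integrable). So on slices it equals an $L^1$-in-$\rho$ quantity plus something that tends to zero, and integrability of $|\nabla u|^2\in L^1$ then forces the averaged error to vanish along a subsequence.

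Finally, the local Navier--Stokes inequality for nonnegative $\varphi$ follows directly from the energy identity \eqref{eq:local-energy-equality}.
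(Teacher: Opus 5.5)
\textbf{There is a genuine gap in the borderline dimension $n=5$, and it lies in Step~1, not in Step~3.}

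By your own criterion, $\int_{B_4}|x'|^{-s}\,dx<\infty$ iff $s<n-1$. Each of the claims $\nabla u\in L^2$, $u\in L^4$, $p\in L^2$, $H\in L^2$ needs $s=4$. For $n=5$ one has $n-1=4$, so these integrals diverge logarithmically ($\int_0\rho^{-4}\rho^{3}\,d\rho=\infty$). Hence the pointwise bounds of Lemma~\ref{lem:local-pressure} give none of these four parts of \eqref{eq:uniform-local-norms}. What survives for $n=5$ is the $L^q$ bound for $q<n-1$, the $L^{n/(n-1)}$ bound for $\nabla p$, the identification of weak derivatives, and Step~2, since these need only $L^1_{\rm loc}$.

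The obstacle you flag in Step~3 is a symptom of the same problem. If $\nabla u,H\in L^2$ and $u\in L^4$ were known, the cutoff error would be at most
$\bigl(\|\nabla u\|_{L^2(A_\varepsilon)}+\|H\|_{L^2(A_\varepsilon)}\bigr)\|u\|_{L^4(A_\varepsilon)}\|\nabla\eta_\varepsilon\|_{L^4}$,
where $A_\varepsilon$ is the shell and $\|\nabla\eta_\varepsilon\|_{L^4}\lesssim\varepsilon^{(n-5)/4}$ stays bounded. That error would then vanish by absolute continuity of the integral, with no cancellation needed.

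Your flux argument does not close as stated. The identity $\operatorname{div}\bigl(\nabla\tfrac12|u|^2-Hu\bigr)=|\nabla u|^2$ together with $|\nabla u|^2\in L^1$ only shows that the cylinder flux $\Phi(\rho)$ has a limit as $\rho\to0$. The cutoff error converges to exactly that limit, and the energy identity is precisely the statement that this limit is $0$. That needs an extra input, e.g.\ $\int_0^1|\Phi(\rho)|\,\rho^{-1}\,d\rho<\infty$, which again requires $\nabla u,H\in L^2$ and a Hardy bound on $u/|x'|$. Besides, $|\nabla u|^2\in L^1$ is itself what you have not proved for $n=5$.

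\textbf{What is missing is a structural, not pointwise, route to the $L^2$-level bounds; this is what the paper does.}

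First, read your Step~3 computation as an inequality rather than a limit. Take $\phi=\varphi\eta_\varepsilon$ with $\varphi\ge0$, $\varphi=1$ on $B_{9/2}$, $\operatorname{supp}\varphi\subset B_5$. The terms carrying $\nabla\varphi$ are integrable pointwise, since $|u|(|\nabla u|+|H|)\lesssim|x'|^{-3}$. The shell error is $O(\varepsilon^{n-5})$, which is merely bounded for $n=5$, but a bound is all you need. This gives $\sup_\varepsilon\int|\nabla u|^2\varphi\eta_\varepsilon\le C(n,M)$, hence $\nabla u\in L^2$ by monotone convergence.

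Second, for $n=5$ write $|u|^4\le M^2|u|^2|x'|^{-2}$. Apply the Hardy inequality in the $n-1$ transverse variables to a Sobolev extension of $u$; this gives $u\in L^4$.

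Third, obtain $p\in L^2$ from $-\Delta p=\partial_i\partial_j(u_iu_j)$ with $u\otimes u\in L^2$. Decompose $p=p_0+h$ with $\|p_0\|_{L^2}\lesssim\|u\otimes u\|_{L^2}$ and $h$ harmonic. Control $h$ through the $L^1$ bound on $p$ coming from $|p|\lesssim|x'|^{-2}$. Then $H\in L^2$.

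The energy identity now follows by density. The weak momentum equation extends to the test field $u\varphi\in W^{1,2}_0$ because $\nabla u$, $u\otimes u$, $p\in L^2$. The weak divergence condition tested with $|u|^2\varphi\in W^{1,4/3}_0$ handles the convective term.

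For $n\ge6$ your argument is correct as written.
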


\begin{proof}
Let $r=|x'|$. Choose
$\eta\in C_c^\infty(B_5)$ with $0\leq \eta \leq1$ and $\eta\equiv1$ on $B_{9/2}$, and let
$\chi_\varepsilon(r)$ satisfy
\[
0\leq\chi_\varepsilon\leq1,\quad
\chi_\varepsilon=0\ \text{on }[0,\varepsilon],\quad
\chi_\varepsilon=1\ \text{on }[2\varepsilon,\infty),\quad
|\nabla\chi_\varepsilon|\lesssim\varepsilon^{-1},\quad
|\Delta\chi_\varepsilon|\lesssim\varepsilon^{-2}.
\]
Set
\[
 \phi_\varepsilon:=\eta^2\chi_\varepsilon^2,\quad
 A_\varepsilon:=\{\varepsilon<r<2\varepsilon\}\cap B_5.
\]
We test the classical equation on $B_5\cap \{r>\varepsilon\}$ with
$u\phi_\varepsilon$.  Using integrations
by parts twice and  $\operatorname{div}u=0$, give
\begin{equation}\label{eq:weak-extension-energy-cutoff}
 \int_{B_5\cap \{r>\varepsilon\}} |\nabla u|^2\phi_\varepsilon\,dx
 =\frac12\int_{B_5\cap \{r>\varepsilon\}} |u|^2\Delta\phi_\varepsilon\,dx
   +\int_{B_5\cap \{r>\varepsilon\}} H\,u\cdot\nabla\phi_\varepsilon\,dx .
\end{equation}

We have
\[
 |\nabla\phi_\varepsilon|
 \leq C\bigl(|\nabla\eta|\chi_\varepsilon^2
              +|\nabla\chi_\varepsilon|\chi_\varepsilon\bigr),
\]
and
\[
 |\Delta\phi_\varepsilon|
 \leq C\bigl((1+|\nabla\eta|^2)\chi_\varepsilon^2
              +|\Delta\chi_\varepsilon|+|\nabla\chi_\varepsilon|^2\bigr).
\]
The pointwise estimates from Lemma~\ref{lem:local-pressure} give that
$|u|\lesssim r^{-1}$ and $|H|+|p|\lesssim r^{-2}$.  The terms containing
$\nabla\eta$ are integrable since, in the  polar coordinate for $\mathbb{R}^{n-1}$,
\[
 \int_0^1 r^{-2}r^{n-2}\,dr<\infty,
 \quad
 \int_0^1 r^{-3}r^{n-2}\,dr<\infty
 \quad (n\geq5).
\]
On $A_\varepsilon$, the  volume estimate
$|A_\varepsilon|\lesssim\varepsilon^{n-1}$ gives
\begin{equation}\label{eq:weak-extension-shell}
\begin{aligned}
&\int_{A_\varepsilon}|u|^2
 \bigl(|\Delta\chi_\varepsilon|+|\nabla\chi_\varepsilon|^2\bigr)\,dx
 +\int_{A_\varepsilon}|H|\,|u|\,|\nabla\chi_\varepsilon|\,dx
 \leq C(n,M)\varepsilon^{n-5}.
\end{aligned}
\end{equation}
Consequently, \eqref{eq:weak-extension-energy-cutoff} implies
\[
 \sup_{0<\varepsilon<1}
 \int_{B_{9/2}\cap\{r>2\varepsilon\}}|\nabla u|^2\,dx
 \leq C(n,M).
\]
The monotone
convergence theorem therefore gives
$\nabla u\in L^2(B_{9/2})$.  The pointwise bound gives
$u\in L^2(B_{9/2})$ as well.  To identify these classical derivatives
with the distributional derivatives across the axis, integrate by parts
against $\psi\chi_\varepsilon$, where $\psi\in C_c^\infty(B_{9/2})$.
The additional term is bounded by
\[
 \|\psi\|_{L^\infty}\int_{A_\varepsilon}
 |u|\,|\nabla\chi_\varepsilon|\,dx
 \leq C\|\psi\|_{L^\infty}\varepsilon^{n-3}\longrightarrow0.
\]
Thus $u\in W^{1,2}(B_{9/2})$, with norm bounded by $C(n,M)$.

For $n\geq6$, $|u|\leq M/r$ directly gives
$u\in L^4(B_{9/2})$. For $n=5$, let $Eu$ be a compactly supported
Sobolev extension to $\mathbb R^n$ satisfying
$\|Eu\|_{W^{1,2}(\mathbb R^n)}\leq C\|u\|_{W^{1,2}(B_{9/2})}$.
Applying the Hardy inequality in the $n-1$ transverse variables and
integrating in $x_n$ gives
\[
 \begin{aligned}
 \int_{B_{9/2}}\frac{|u|^2}{r^2}\,dx
 &\leq \int_{\mathbb R^n}\frac{|Eu|^2}{r^2}\,dx
 \leq \frac{4}{(n-3)^2}
        \int_{\mathbb R^n}|\nabla_{x'}Eu|^2\,dx \leq C\|u\|_{W^{1,2}(B_{9/2})}^2.
 \end{aligned}
\]
Hence
\[
 \int_{B_{9/2}}|u|^4\,dx
 \leq M^2\int_{B_{9/2}}\frac{|u|^2}{r^2}\,dx
 \leq C(n,M),
\]
and in every dimension $n\geq5$ we have $u\in L^4(B_{9/2})$.

We next extend the equations across the axis.  Let
$\psi\in C_c^\infty(B_{9/2})$ and test \eqref{NS} with
$\psi\chi_\varepsilon$.  After integration by parts, the terms that are
new compared with the desired weak formulation are supported in
$A_\varepsilon$ and satisfy
\[
 \int_{A_\varepsilon}
 \bigl(|\nabla u|+|u|^2+|p|\bigr)|\nabla\chi_\varepsilon|\,dx
 \leq C(n,M)\varepsilon^{n-4}\longrightarrow0,
\]
whereas the error in the divergence equation is
$O(\varepsilon^{n-3})$. Thus
$(u,p)$ satisfies the distributional Navier-Stokes equations in
$B_{9/2}$, and in particular in $B_4$.

Taking the divergence of the extended momentum equation gives
\[
 -\Delta p=\partial_i\partial_j(u_i u_j)
 \quad\text{in }\mathcal D'(B_{9/2}).
\]
Since $u\in L^4(B_{9/2})$, the tensor $u\otimes u$ belongs to
$L^2(B_{9/2})$.  A standard local pressure decomposition on $B_{9/2}$
gives, after restriction to $B_{17/4}$,
\[
 p=p_0+h,
\]
where $p_0\in L^2(B_{17/4})$, $h$ is harmonic there, and
\[
 \|p_0\|_{L^2(B_{17/4})}
 \le C\|u\otimes u\|_{L^2(B_{9/2})}
 \le C(n,M).
\]
By the normalized pointwise pressure bound,
$\|p\|_{L^1(B_{9/2})}\le C(n,M)$. Hence
$\|h\|_{L^1(B_{17/4})}\le C(n,M)$, and the interior estimate for harmonic
functions yields
\[
 \|h\|_{L^2(B_4)}\le C\|h\|_{L^1(B_{17/4})}\le C(n,M).
\]
Consequently
\[
 \|p\|_{L^2(B_4)}\le C(n,M).
\]
Therefore
$H=\frac12|u|^2+p\in L^2(B_4)$.

Moreover, the pointwise estimate
\[
|\nabla p(x)|\leq C(n,M)|x'|^{-3}
\]
implies that, for every $s<(n-1)/3$,
\[
\int_{B_4\setminus\{x'=0\}}|\nabla p|^s\,dx
\lesssim
\int_0^1 r^{n-2-3s}\,dr+1
\leq C(n,M,s).
\]
In particular, since $n\geq5$,
\[
\frac n4<\frac{n-1}{3},
\]
and hence
\begin{equation}\label{eq:grad-p-Ln4}
\|\nabla p\|_{L^{n/4}(B_4\setminus\{x'=0\})}
\leq C(n,M).
\end{equation}

We next verify that the classical derivatives of $p$ away from the axis
are also its distributional derivatives in the whole ball $B_4$.
Let $\psi\in C_c^\infty(B_4)$, and let
$\chi_\varepsilon=\chi_\varepsilon(|x'|)$ be the transverse cutoff used
above.
For each $i=1,\ldots,n$, the function
$\psi\chi_\varepsilon$ is compactly supported away from the axis, and
therefore  integration by parts gives
\[
\int_{B_4}
p\,\partial_i(\psi\chi_\varepsilon)\,dx
=
-\int_{B_4}
(\partial_i p)\,\psi\chi_\varepsilon\,dx.
\]
Equivalently,
\begin{equation}\label{eq:pressure-weak-derivative-cutoff}
\int_{B_4}
p\,\chi_\varepsilon\,\partial_i\psi\,dx
=
-\int_{B_4}
(\partial_i p)\,\psi\chi_\varepsilon\,dx
-
\int_{B_4}
p\,\psi\,\partial_i\chi_\varepsilon\,dx.
\end{equation}

The last term vanishes as $\varepsilon\downarrow0$.  Indeed, for
$i=n$ it is identically zero, since $\chi_\varepsilon$ depends only on
$x'$.  For $1\leq i\leq n-1$, the normalized pressure estimate
$|p(x)|\leq C(n,M)|x'|^{-2}$ gives
\[
\begin{aligned}
\left|
\int_{B_4}
p\,\psi\,\partial_i\chi_\varepsilon\,dx
\right|
\leq
C\|\psi\|_{L^\infty}
\varepsilon^{-1}
\int_{A_\varepsilon}|x'|^{-2}\,dx
\leq
C(n,M,\psi)\,
\varepsilon^{-1}\varepsilon^{-2}
|A_\varepsilon|
\leq
C(n,M,\psi)\varepsilon^{n-4}
\longrightarrow0.
\end{aligned}
\]

Since $p\in L^2(B_4)$ and
$\partial_i p\in L^{n/4}(B_4\setminus\{x'=0\})$, while
$\chi_\varepsilon\to1$ almost everywhere, we may let
$\varepsilon\downarrow0$ in
\eqref{eq:pressure-weak-derivative-cutoff} to obtain
\[
\int_{B_4}p\,\partial_i\psi\,dx
=
-\int_{B_4}(\partial_i p)\,\psi\,dx.
\]
Thus the classical gradient of $p$ away from the axis extends to the
distributional gradient of $p$ in $B_4$. As
\(
\frac n4\geq\frac{n}{n-1}, (n\geq5),
\)
\eqref{eq:grad-p-Ln4} therefore implies
\[
\|\nabla p\|_{L^{n/(n-1)}(B_4)}
\leq C(n,M).
\]
Together with the already established $L^2$ bound for $p$, this yields
\begin{equation}\label{eq:pressure-W1q}
\|p\|_{W^{1,n/(n-1)}(B_4)}
\leq C(n,M).
\end{equation}

Finally, $u\otimes u,p\in L^2(B_4)$, so the test functions in the  weak form of  momentum
equation extends continuously to compactly supported $W^{1,2}$ 
fields. Thus $u\varphi$ is admissible. Moreover,
$|u|^2\varphi\in W^{1,4/3}_0(B_4)$ and $u\in L^4(B_4)$, so the
divergence condition in weak form gives
$\int u\cdot\nabla(|u|^2\varphi)=0$ by density.
Combining these two identities gives, for every
$\varphi\in C_c^\infty(B_4)$,
\[
 \int_{B_4}\nabla u:\nabla(u\varphi)\,dx
 =\int_{B_4}H\,u\cdot\nabla\varphi\,dx.
\]
This is the asserted local energy identity, and it implies the local
Navier-Stokes inequality for nonnegative $\varphi$.
\end{proof}

\section{Green function for the adjoint drift operator}\label{sec:Green}

	For Sections~\ref{sec:Green}-\ref{sec:proof-main}, fix
	a pair $(u,p)$ satisfying the hypotheses of
	Lemma~\ref{lem:local-pressure} with $n\geq5$,  and normalize $p$ by that lemma.
	We now construct the approximate Green functions used to obtain pointwise
	control of the total head pressure.
	We follow the methods of Frehse and R\r{u}\v{z}i\v{c}ka~\cite{FrehseRuzicka96,FrehseRuzicka98}.

	For each $x_0\in B_{1}\setminus\{x'=0\}$, consider the Dirichlet problem
	\begin{equation}\label{eq:Green2}
		\begin{cases}
			-\Delta G-u_k\cdot\nabla G=\delta_{h,x_0}(x),&\text{in }B_4,\\[4pt]
			G=0,&\text{on }\p B_4,
		\end{cases}
	\end{equation}
where $u_k$ and $\delta_{h,x_0}$ are defined as follows:
	\begin{itemize}
		\item By Lemma~\ref{lem:weak-extension}, $u$ is divergence free in
		$B_{9/2}$ in the sense of distributions.  Standard mollification in
		$B_{9/2}$ therefore gives a sequence
		$\{u_k\}\subset C^\infty(\overline{B_4})$ of divergence-free vector
		fields such that
		\[
		u_k\to u\quad\text{in }W^{1,2}(B_4)\cap L^4(B_4)
		\cap L^q(B_4)\quad\text{for every }1\le q<n-1.
		\]

		\item Fix $\rho\in C_c^\infty(B_1)$ with $\rho\ge0$ and
		$\int_{B_1}\rho=1$, and set
		\[
		\delta_{h,x_0}(x)
		=h^{-n}\rho\!\left(\frac{x-x_0}{h}\right),
		\quad 0<h<h_0<\frac12.
		\]
		Thus $\delta_{h,x_0}\ge0$, $\int\delta_{h,x_0}=1$,
		$\operatorname{supp}\delta_{h,x_0}\subset B_h(x_0)$, and
		$\|\delta_{h,x_0}\|_{L^\infty}\le C h^{-n}$.
	\end{itemize}

	For fixed $h,k,x_0$, the Dirichlet problem~\eqref{eq:Green2} has a unique weak solution in $W_0^{1,2}(B_4)$. Standard elliptic regularity then gives smoothness.  We denote the solution by $G_{h,k,x_0}$ (or simply $G$).

	\begin{proposition}\label{Greenfunction}
    The solution $G_{h,k,x_0}$ to \eqref{eq:Green2} satisfies the following three properties.

	\begin{enumerate}
		\item[(P1)]\textbf{Regularity and sign.}
		$G_{h,k,x_0}\in C^\infty(\overline{B_4})$ and $G_{h,k,x_0}\geq 0$.

		\item[(P2)]
        \textbf{Bounds for a fixed source scale.}
		For every fixed $h\in(0,h_0)$, there is a constant $C(h)$,
		independent of $k$ and $x_0$, such that
		\begin{equation*}
		\|G_{h,k,x_0}\|_{L^\infty(B_4)}
		+\|G_{h,k,x_0}\|_{W^{1,4}(B_4)}\leq C(h).
		\end{equation*}

		\item[(P3)]\textbf{Uniform annular bounds.}
		For every $1\le q<\frac{n}{n-1}$, the family $\{G_{h,k,x_0}\}$ is uniformly bounded in
		$W^{1,q}(B_4 )$, and it is uniformly bounded in
		\[
		L^4(B_{3}\setminus B_{2})\cap
		W^{1,2}(B_{3}\setminus B_{2}).
		\]
		These bounds are uniform with respect to all parameters $h$, $k$, and $x_0$.
	 \end{enumerate}
	\end{proposition}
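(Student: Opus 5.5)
The plan is to exploit the divergence-free structure of the drift: since $\operatorname{div}u_k=0$, the operator $L_k G:=-\Delta G-u_k\cdot\nabla G$ satisfies $\int (u_k\cdot\nabla G)G=0$ for $G\in W^{1,2}_0$. This makes the bilinear form coercive, so existence and uniqueness in $W^{1,2}_0(B_4)$ follow from Lax--Milgram. Smoothness up to the boundary then follows from Schauder theory, because $u_k$ is smooth on $\overline{B_4}$ and $\delta_{h,x_0}$ is smooth. For the sign in (P1), test with $G_-=\min\{G,0\}$. The drift term again vanishes: $\int u_k\cdot\nabla G\,G_-=\frac12\int u_k\cdot\nabla (G_-^2)=0$. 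This leaves $\int|\nabla G_-|^2=\int\delta_{h,x_0}G_-\le0$, hence $G_-=0$.

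For (P2), fix $h$; the source then lies in $L^\infty$ with norm $Ch^{-n}$. Testing with $G$ and using Poincaré gives $\|\nabla G\|_{L^2}\le C(h)$, independent of $k$. For the $L^\infty$ bound I would run De Giorgi/Moser iteration on the truncations $(G-\kappa)_+$. Testing with $(G-\kappa)_+$ kills the drift term by the same divergence identity. The resulting level-set inequality is therefore exactly that of the pure Laplacian with bounded right-hand side, and it yields $\|G\|_{L^\infty}\le C(h)$ uniformly in $k$ and $x_0$. An alternative is to compare $G$, via the maximum principle, with the solution of $-\Delta w=\delta_{h,x_0}$, but the drift obstructs a direct comparison, so I prefer De Giorgi.

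For $W^{1,4}$, rewrite $-\Delta G=\delta_{h,x_0}+\operatorname{div}(u_kG)$. This uses $\operatorname{div}u_k=0$. Since $\|u_k\|_{L^4}$ is bounded uniformly in $k$ and $G$ is bounded, we have $u_kG\in L^4$ uniformly. The $L^4$ Calderón--Zygmund estimate for the Dirichlet Laplacian on $B_4$ then gives $\|\nabla G\|_{L^4}\le C(h)$.

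(P3) is the main obstacle, since the bounds must now be independent of $h$. The interior estimate follows Stampacchia / Littman--Stampacchia--Weinberger duality, combined with the truncation trick of Frehse--Růžička. Testing with $T_\kappa(G)=\min\{G,\kappa\}$ kills the drift term once more, since $u_k\cdot\nabla G\,T_\kappa(G)$ is a divergence. Because $\int\delta_{h,x_0}=1$, this gives $\int_{\{G<\kappa\}}|\nabla G|^2\le\kappa$. Testing with $T_{\kappa+1}(G)-T_\kappa(G)$ gives $\int_{\{\kappa<G<\kappa+1\}}|\nabla G|^2\le1$. Combining these with the Sobolev inequality, in the standard Bénilan--Boccardo--Gallouët way, yields $G\in L^{n/(n-2),\infty}$ and $\nabla G\in L^{n/(n-1),\infty}$, with bounds independent of $h,k,x_0$. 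This in turn gives uniform $W^{1,q}$ bounds for all $q<n/(n-1)$. These estimates never use the size of $u_k$, which is what makes them uniform.

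For the annulus $B_3\setminus B_2$, note that the source sits in $B_{3/2}$ (as $x_0\in B_1$, $h<1/2$), so $L_kG=0$ on $B_4\setminus \overline{B_{3/2}}$. I would take a cutoff $\zeta$ supported in $B_4\setminus B_{7/4}$ with $\zeta\equiv1$ on $B_3\setminus B_2$, and test with $\zeta^2 G$; this is admissible since $G=0$ on $\partial B_4$. The drift term becomes $\int u_k\cdot\nabla\zeta\,\zeta G^2$. It is controlled by $\|u_k\|_{L^4}\|\zeta G\|_{L^{8/3}}\|G\nabla\zeta\|_{L^{8/3}}$, which can be absorbed using the $L^{n/(n-2),\infty}$ bound interpolated against the energy. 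This interpolation is delicate when $n/(n-2)<8/3$. If it fails, the fallback is a Moser iteration on the annular region, which gives $\sup_{B_{7/2}\setminus B_{15/8}} G\le C$; from this both the $W^{1,2}$ bound and the $L^4$ bound on $B_3\setminus B_2$ follow immediately. Here the subsolution property of $G$ and the $L^4$ (even $L^q$, $q<n-1$) integrability of $u_k$, with $n-1\ge4>n/2$ impossible in general, must be checked. This is the point where I expect the real difficulty: the drift is only in $L^q$, $q<n-1$, so the Moser step requires $q>n/2$, i.e.\ $n-1>n/2$, which holds for $n\ge3$. One uses $u_k\to u$ in such an $L^q$, which is uniform in $k$.
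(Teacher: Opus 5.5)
Your proposal is correct, and for (P1) and (P2) it is essentially the paper's argument. You use De Giorgi truncations $(G-\kappa)_+$ where the paper iterates with the powers $G^s$; in both, the only input is that $\operatorname{div}u_k=0$ annihilates the drift.

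The genuine difference is the $h$-independent global bound in (P3). The paper tests once with $\varphi(G)=G(1+G^\alpha)^{-1/\alpha}$ to get $\int|\nabla G|^2(1+G^\alpha)^{-1-1/\alpha}\le 1$. It then closes strong $L^\beta$ and $L^r$ bounds for each fixed $\beta<\frac n{n-2}$, $r<\frac n{n-1}$ via H\"older, Sobolev, Young and the identity $\frac{(1+\alpha)r}{2-r}=\beta$. Your B\'enilan--Boccardo--Gallou\"et truncation with $T_\kappa(G)$ instead yields the endpoint Marcinkiewicz bounds $G\in L^{n/(n-2),\infty}$ and $\nabla G\in L^{n/(n-1),\infty}$ directly, and is arguably more transparent. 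The paper's version avoids weak-type spaces and level-set bookkeeping altogether. Both give the same subcritical strong bounds using only $\operatorname{div}u_k=0$ and $\int\delta_{h,x_0}=1$.

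For the annulus, your first attempt does not close as written. The factor $G\nabla\zeta$ lives where $\zeta<1$, so the energy does not control it. The a priori bound $G\in L^\beta$ with $\beta<\frac n{n-2}\le\frac53$ is well below $L^{8/3}$. Moreover, even a successful single Caccioppoli step only gives $G\in L^{2n/(n-2)}$ locally, and $\frac{2n}{n-2}<4$ for $n\ge5$.

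So the Moser iteration over nested annuli is not a fallback but the argument itself, and it is exactly the paper's Step 3. You start from $\|G\|_{L^\beta}$ with $q'<\beta<\frac n{n-2}$; the paper takes $(\lambda_0+1)q'=\beta$. This start is possible precisely because $q>\frac n2$, and each step multiplies the exponent by $\frac{n}{n-2}\cdot\frac{q-1}{q}>1$. Since $q=4$ works only for $n\le7$, in general you must take $q\in(\frac n2,n-1)$, as you eventually do.

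Two harmless variations remain. You run the iteration up to $L^\infty$ on a slightly smaller annulus rather than stopping at $L^4$. You then get the $W^{1,2}$ bound from Caccioppoli, whereas the paper uses the interior estimate for $-\Delta G=\operatorname{div}(u_kG)$.
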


Unless indicated
otherwise, the constants in this section depend only on $n$ and $M$ (and on the fixed
exponent in (P3)).
	(P1) follows from the maximum principle for the operator
	$-\Delta-u_k\cdot\nabla$ (note that $u_k$ is smooth and
	$\delta_{h,x_0}\geq 0$).

	Next, we prove Properties~(P2) and~(P3).
	Fix $h>0$ and consider the equation
	\begin{equation}\label{eq:Green3}
		-\Delta G-u_k\cdot\nabla G=\delta_{h,x_0}\quad\text{in }B_4,\quad
		G=0\ \text{on }\partial B_4.
	\end{equation}

\begin{proof}[Proof of (P2) of Proposition \ref{Greenfunction}.]
	We first obtain an $L^{2n/(n-2)}$ estimate, then perform a Moser
	iteration to obtain the $L^\infty$ bound, and finally use
	Calder\'on-Zygmund theory to obtain the $W^{1,4}$ estimate.

	{\it Step 1: Energy estimate.}
	Test~\eqref{eq:Green3} with $G$ itself (recall $G\geq 0$ by (P1)).
	Since $u_k$ is divergence-free,
	\[
	\int_{B_4 } u_k\!\cdot\!\nabla G\; G\,dx
	=\frac12\int_{B_4 } u_k\!\cdot\!\nabla(G^2)\,dx
	=-\frac12\int_{B_4 }(\divg u_k)G^2\,dx=0 .
	\]
	Hence
	\begin{equation}\label{eq:seed}
		\int_{B_4 }|\nabla G|^2\,dx
		=\int_{B_4 }\delta_{h,x_0}\,G\,dx .
	\end{equation}
	  By H\"older's inequality and the Sobolev embedding
	$W^{1,2}_0(B_4 )\hookrightarrow L^{\frac{2n}{n-2}}(B_4)$,
	\[
	\int_{B_4 }\delta_{h,x_0}G\,dx
	\leq \|\delta_{h,x_0}\|_{L^{\frac{2n}{n+2}}(B_4 )}\,
	\|G\|_{L^{\frac{2n}{n-2}}(B_4 )}
	\leq C(h)\|\nabla G\|_{L^2(B_4 )},
	\]
	where $C(h)$ depends on $h$
	but not on $k$ or $x_0$.  Substituting into~\eqref{eq:seed} yields
	$\|\nabla G\|_{L^2(B_4 )}\leq C(h)$, and applying the Sobolev
	inequality once more gives
	\begin{equation}\label{eq:seedLq}
		\|G\|_{L^{\frac{2n}{n-2}}(B_4 )}\leq C(h),\ \ \ \text{uniformly in}\ k\, \text{and}\,  x_0.
	\end{equation}

{\it Step 2: Moser iteration and the $L^\infty$ bound.}
	For $s\ge1$, test \eqref{eq:Green3} with $G^s$. Integration by parts and using the divergence free property of $u_k$
	give
	\begin{equation}\label{eq:Lqs}
		\frac{4s}{(s+1)^2}\int_{B_4 }|\nabla G^{\frac{s+1}{2}}|^2\,dx
		=\int_{B_4 }\delta_{h,x_0}G^s\,dx .
	\end{equation}
	Since $G\geq 0$ and $\delta_{h,x_0}\leq C_0 h^{-n}$ pointwise,
	\[
	\int_{B_4 }\delta_{h,x_0}G^s\,dx
	\leq C_0 h^{-n}\int_{B_h(x_0)}G^s\,dx
	\leq C_0 h^{-n}\int_{B_4 }G^s\,dx
	= C_0 h^{-n}\,\|G\|_{L^s(B_4 )}^s .
	\]
	Applying the Sobolev inequality to $G^{\frac{s+1}{2}}\in W^{1,2}_0(B_4 )$
	and using~\eqref{eq:Lqs}, yield
	\begin{equation}\label{eq:moserstep}
		\|G\|_{L^{\frac{(s+1)n}{n-2}}(B_4 )}^{s+1}
		\leq C\int_{B_4 }|\nabla G^{\frac{s+1}{2}}|^2\,dx
		\leq C\,\frac{(s+1)^2}{s}\,h^{-n}\,
		\|G\|_{L^s(B_4 )}^s ,
	\end{equation}
	where $C$ denotes the Sobolev constant  and is independent of
	$s,h,k,x_0$.

	Now define the iteration sequence
	\[
	s_0=\frac{2n}{n-2},\quad
	s_{j+1}= \frac{(s_j+1)n}{n-2}\quad(j\geq 0).
	\]
	For $n\geq 3$ one has $\frac{n}{n-2}>1$, hence $s_j\nearrow\infty$ as
	$j\to\infty$.   Choose $s=s_j$ in \eqref{eq:moserstep} to  get
	\[
	\|G\|_{L^{s_{j+1}}(B_4 )}
	\leq \bigl(C\,h^{-n}\,(s_j+1)\bigr)^{\frac{1}{s_j+1}}
	\,\|G\|_{L^{s_j}(B_4 )}^{\frac{s_j}{s_j+1}} .
	\]
	Put $A_j=\max\{1,\|G\|_{L^{s_j}(B_4)}\}$.  The preceding
	inequality gives
	\begin{equation}\label{eq:moser-recursion}
	 A_{j+1}\leq
	 \bigl(C h^{-n}(s_j+1)\bigr)^{\frac1{s_j+1}}
	 A_j^{\frac{s_j}{s_j+1}} .
	\end{equation}
	Since $s_{j+1}\geq \frac{n}{n-2}s_j$ and $n\geq5$,
	\[
	 \sum_{j=0}^{\infty}\frac{1+\log(s_j+1)}{s_j+1}<\infty.
	\]
	Iterating \eqref{eq:moser-recursion} therefore gives
	\[
	 \sup_{j\geq0}A_j\leq C(h),
	\]
	where $C(h)$ is independent of $k$ and $x_0$.  Since
	$s_j\to\infty$, this implies
	\begin{equation}\label{eq:Green-Linfty}
	 \|G_{h,k,x_0}\|_{L^\infty(B_4)}\leq C(h).
	\end{equation}

	{\it Step 3: $W^{1,4}$ estimate.}
	Rewrite~\eqref{eq:Green3} as
	\[
	-\Delta G = \divg(u_k G) + \delta_{h,x_0}.
	\]
	By Lemma~\ref{lem:weak-extension}, $\{u_k\}$ is uniformly bounded in
$L^4(B_4)$ for every $n\geq5$.  Combining this with
\eqref{eq:Green-Linfty} gives
\[
\|u_kG\|_{L^4(B_4)}
\leq \|u_k\|_{L^4(B_4)}\|G\|_{L^\infty(B_4)}
\leq C(h).
\]
Moreover,
	$\|\delta_{h,x_0}\|_{L^\infty(B_4 )}\leq C h^{-n}$.
	The standard $W^{1,4}$ estimate for the Poisson equation with
	right-hand side in divergence form yields
	\[
	\|G\|_{W^{1,4}(B_4)}
	\leq C\bigl(
	\|u_kG\|_{L^4(B_4)}
	+\|\delta_{h,x_0}\|_{L^4(B_4)}
	+\|G\|_{L^4(B_4)}
	\bigr)
	\leq C(h).
	\]
	This completes the proof of (P2).
    \end{proof}

\begin{proof}[Proof of (P3) of Proposition \ref{Greenfunction}.]
	We must show that, for every $q<\frac{n}{n-1}$, the family $\{G\}$ is uniformly bounded in
	$W^{1,q}(B_4 )$, and that it is uniformly bounded in
	$L^4(B_{3}\setminus B_{2})\cap W^{1,2}(B_{3}\setminus B_{2})$
	with respect to $h$, $k$, and $x_0$.
	The essential point is that these bounds are \emph{independent of $h$},
	unlike the bounds in (P2).

{\it Step~1: A weighted gradient estimate independent of $h$.}
	For a small parameter $\alpha\in(0,1)$ to be fixed later, define
	\[
	\varphi(t)=\frac{t}{(1+t^\alpha)^{1/\alpha}},\quad t\geq 0.
	\]
	One checks that $\varphi(0)=0$, $0\leq\varphi(t)\leq 1$ for all $t\geq0$,
	and $\varphi'(t)=1/(1+t^\alpha)^{1+1/\alpha}$.
	Since $G\geq 0$ (P1), we may test~\eqref{eq:Green3} with
	$\varphi(G)\in W^{1,2}_0(B_4 )$.
	Because $\divg u_k=0$, the drift term vanishes:
	\[
	\int_{B_4 } u_k\!\cdot\!\nabla G\;\varphi(G)\,dx
	=\int_{B_4 } u_k\!\cdot\!\nabla\bigl(\Phi(G)\bigr)\,dx=0,
	\]
	where $\Phi'=\varphi$.  The remaining terms give
	\begin{equation}\label{eq:weighted}
		\int_{B_4 }\frac{|\nabla G|^2}{(1+G^\alpha)^{1+1/\alpha}}\,dx
		=\int_{B_4}\delta_{h,x_0}\,\frac{G}{(1+G^\alpha)^{1/\alpha}}\,dx
		\leq\int_{B_4}\delta_{h,x_0}\,dx=1.
	\end{equation}
	The bound~\eqref{eq:weighted} is independent of $h,k,x_0$.

	{\it Step~2: $L^\beta$ and $W^{1,q}$ bounds ($\beta<\frac{n}{n-2}$,
		$q<\frac{n}{n-1}$).}
	Set
	\begin{equation}\label{eq:beta}
		\beta=\frac{n-n\alpha}{n-2},\quad
		r=\frac{n\beta}{n+\beta}.
	\end{equation}
	Note that $\beta\nearrow\frac{n}{n-2}$ and $r \nearrow\frac{n}{n-1}$
	as $\alpha\to0+$.  By the Gagliardo-Nirenberg-Sobolev inequality
	on $B_4 $ (where $G$ vanishes on the boundary),
	\[
	\|G\|_{L^\beta(B_4)}
	\leq C\,\|\nabla G\|_{L^r(B_4)},
	\]
	with $C$ depending on $n$ and $r$, but not on $h,k,x_0$.
	To estimate the right-hand side,
	set $W=(1+G^\alpha)^{1+1/\alpha}$ and write
	\[
	 |\nabla G|^r=(|\nabla G|^2/W)^{r/2}W^{r/2}.
	\]
	H\"older's inequality with exponents
	$2/r$ and $2/(2-r)$ and \eqref{eq:weighted} yield
	\[
	\int_{B_4}|\nabla G|^r\,dx
	\leq\Bigl(\int_{B_4}\frac{|\nabla G|^2}{W}\,dx\Bigr)^{\!r/2}
	\Bigl(\int_{B_4}W^{\frac{r}{2-r}}\,dx\Bigr)^{\!\frac{2-r}{2}}
	\leq\Bigl(\int_{B_4}W^{\frac{r}{2-r}}\,dx\Bigr)^{\!\frac{2-r}{2}}.
	\]
	  Now
	\begin{equation}\label{W}
	W^{\frac{r}{2-r}}=(1+G^\alpha)^{\frac{(1+1/\alpha)r}{2-r}}
	\leq C(\alpha, r)\bigl(1+G^{\frac{(\alpha+1)r}{2-r}}\bigr).
	\end{equation}
	A direct computation using~\eqref{eq:beta} shows that
	\[
	\frac{(\alpha+1)r}{2-r}=\beta .
	\]
	Therefore
	\[
	\int_{B_4 }W^{\frac{r}{2-r}}\,dx
	\leq C(\beta)\Bigl(|B_4|+\|G\|_{L^\beta(B_4)}^\beta\Bigr).
	\]
	Combining the above estimates, one has
	\[
	\|G\|_{L^\beta(B_4)}
	\leq C(\beta) \Bigl(1+\|G\|_{L^\beta(B_4 )}^\beta\Bigr)^{\!\frac{2-r}{2r}} .
	\]
	Using~\eqref{eq:beta} again,
	$\frac{2-r}{2r}=\frac{1+\alpha}{2\beta}$, so the exponent on
	$\|G\|_{L^\beta(B_4)}$ is $\frac{1+\alpha}{2}<1$ for $\alpha<1$.
	 Young's inequality now gives
	$\|G\|_{L^\beta(B_4)}\leq C$ uniformly in $h,k,x_0$.
	Substituting this bound into~\eqref{W} and the preceding H\"older estimate
	yields the corresponding uniform $L^r$ bound for $\nabla G$.
	Thus, for any prescribed $1\le q<\frac{n}{n-1}$, we may choose
	$\alpha>0$ sufficiently small so that $q<r<\frac{n}{n-1}$ (and hence
	$1<r<2$).  The uniform $L^r$ bounds for $G$ and $\nabla G$, together
	with the finite measure of $B_4$, then imply the required uniform
	$W^{1,q}(B_4)$ bound.

	{\it Step~3: Annular $L^4$ bound via bootstrap.}
Let $\tau_\rho$ be a smooth cut-off function with
\[
\tau_\rho=1\ \text{on }B_{3+\rho}\setminus B_{2-\rho},\quad
\tau_\rho=0\ \text{outside }B_{3+2\rho}\setminus B_{2-2\rho},
\]
where $0<\rho<(1-h_0)/2$ is fixed.  Set
$D_\rho=B_{3+\rho}\setminus B_{2-\rho}$ and
$D_{2\rho}=B_{3+2\rho}\setminus B_{2-2\rho}$.

For $\lambda>0$, test~\eqref{eq:Green3} with $G^\lambda\tau_\rho^2$. Since $G>0$ in the interior by the strong maximum principle, this is justified first for fixed $h,k$; approximation by smooth powers then gives the stated uniform estimates.
On $D_{2\rho}$ the source term vanishes: indeed,
$\supp\delta_{h,x_0}\subset B_{1+h_0}$, while
$D_{2\rho}\subset\R^n\setminus B_{2-2\rho}$ and
$2-2\rho>1+h_0$.
Integration by parts gives
\begin{equation}\label{eq:tau2-corrected}
    \begin{aligned}
        \frac{4\lambda}{(\lambda+1)^2}\int_{B_4}|\nabla G^{\frac{\lambda+1}{2}}|^2\tau_\rho^2\,dx
        &=\frac{2}{\lambda+1}\int_{B_4} G^{\lambda+1}\bigl(|\nabla\tau_\rho|^2
        +\tau_\rho\Delta\tau_\rho\bigr)\,dx\\
        &\quad-\frac{2}{\lambda+1}\int_{B_4} G^{\lambda+1}\tau_\rho\,u_k\!\cdot\!\nabla\tau_\rho\,dx.
    \end{aligned}
\end{equation}

Since $\nabla\tau_\rho$ and $\Delta\tau_\rho$ are supported in
$D_{2\rho}\setminus D_\rho$ and are bounded by $C(\rho)$, the right-hand side
of~\eqref{eq:tau2-corrected} is controlled by
\[
C(\rho)\Bigl(\|G\|_{L^{\lambda+1}(D_{2\rho})}^{\lambda+1}
+\|u_k\|_{L^q(B_4)}\|G\|_{L^{\frac{(\lambda+1)q}{q-1}}(D_{2\rho})}^{\lambda+1}\Bigr).
\]

Now restrict $G^{\frac{\lambda+1}{2}}\tau_\rho$ to $D_\rho$ (where $\tau_\rho=1$)
and apply the Sobolev inequality.  Since $G^{\frac{\lambda+1}{2}}\tau_\rho=0$ on
$\partial B_4$, we have
\[
\|G\|_{L^{\frac{(\lambda+1)n}{n-2}}(D_\rho)}^{\lambda+1}
\leq \|G^{\frac{\lambda+1}{2}}\tau_\rho\|_{L^{\frac{2n}{n-2}}(B_4)}^2
\leq C\int_{B_4}|\nabla(G^{\frac{\lambda+1}{2}}\tau_\rho)|^2\,dx .
\]
Expanding the gradient, we have
\[
\int_{B_4}|\nabla(G^{\frac{\lambda+1}{2}}\tau_\rho)|^2\,dx
\leq 2\int_{B_4}|\nabla G^{\frac{\lambda+1}{2}}|^2\tau_\rho^2\,dx
+C(\rho)\int_{D_{2\rho}}G^{\lambda+1}\,dx .
\]
Combining this with~\eqref{eq:tau2-corrected} and the bound on the right-hand side,
we obtain, for every $\lambda>0$,
\begin{equation}\label{eq:bootstrap-corrected}
    \|G\|_{L^{\frac{(\lambda+1)n}{n-2}}(D_\rho)}^{\lambda+1}
    \leq C(n,\lambda,\rho)\Bigl(
    \|G\|_{L^{\lambda+1}(D_{2\rho})}^{\lambda+1}
    +\|u_k\|_{L^q}\,\|G\|_{L^{\frac{(\lambda+1)q}{q-1}}(D_{2\rho})}^{\lambda+1}
    \Bigr).
\end{equation}
Here the constant may depend on $\lambda$, but is independent of $h$, $k$,
and $x_0$.

By Lemma~\ref{lem:weak-extension} and $n\geq 5$, the sequence $\{u_k\}$ is uniformly
bounded in $L^q(B_4)$ for every $q<n-1$; in particular we may fix
$q\in(\frac n2,n-1)$.  For such $q$ we have $\frac{q}{q-1}<\frac{n}{n-2}$,
which means that the exponent $\frac{(\lambda+1)q}{q-1}$ in the second term
of~\eqref{eq:bootstrap-corrected} is strictly smaller than 
$\frac{(\lambda+1)n}{n-2}$.  Since
\(q/(q-1)<n/(n-2)\), we may choose \(\alpha>0\)
sufficiently small so that
\[
\beta=\frac{n(1-\alpha)}{n-2}>\frac{q}{q-1}.
\]
Then
\[
\lambda_0:=\frac{\beta(q-1)}q-1>0.
\]
Now start the iteration with $\lambda_0$. By its definition,
\[
\frac{(\lambda_0+1)q}{q-1}=\beta,
\]
and Step~2 gives a uniform $L^\beta(B_4)$ bound. Hence both terms on the
right-hand side of~\eqref{eq:bootstrap-corrected} are finite and uniformly bounded at the initial step. We fix $q$ and $\alpha$ as
functions of $n$ alone.
Set $\lambda_{j+1}$ by
\begin{equation}\label{iteration}
\lambda_{j+1}  = (\lambda_j +1 ) \cdot \frac{ n}{n-2 } \cdot \frac{q-1}{q} -1 .
\end{equation}
Equivalently, if $s_j:=\lambda_j+1$, then
\[
 s_{j+1}=\frac{n}{n-2}\frac{q-1}{q}s_j.
\]
Since $\frac{ n}{n-2 }\frac{q-1}{q}>1$, we have
$\lambda_j\nearrow\infty$.
Choose \(N\) and positive numbers
\(\rho_0>\rho_1>\cdots>\rho_N\) such that
\(2\rho_{j+1}\leq\rho_j\).
Then
\[
D_{2\rho_{j+1}}\subset D_{\rho_j}.
\]
Applying~\eqref{eq:bootstrap-corrected} successively, with the exponents defined through 
\eqref{iteration} and with the nested annuli above, yields an $N$ for which
\[
 P_N:=\frac{(\lambda_N+1)n}{n-2}\geq4.
\]
Consequently,
\[
\|G\|_{L^4(B_{3}\setminus B_{2})}\leq C,
\]
uniformly in $h,k,x_0$.

{\it Step~4: Annular $W^{1,2}$ estimate.}
As proved in Step 3, $\|G\|_{L^4(D_\rho)}$ is uniformly bounded for some $\rho>0$.
Fix \(\rho>0\) sufficiently small so that
\[
B_{3}\setminus B_{2}\Subset D_\rho
\]
and
\[
\operatorname{supp}\delta_{h,x_0}\cap D_\rho=\varnothing
\]
for all \(0<h<h_0\) and \(x_0\in B_1\).
 On $D_\rho$,  using $\divg u_k=0$, one has
\begin{equation}\label{eq:div-form}
    -\Delta G = u_k\!\cdot\!\nabla G = \divg(u_k G) \quad\text{in } D_\rho.
\end{equation}
Thus $G$ satisfies a Poisson equation with right-hand side in divergence form.

Since $B_3\setminus B_2 \Subset D_\rho$,
the interior $W^{1,2}$ estimate yields
\begin{equation}\label{eq:CZ}
    \|\nabla G\|_{L^2(B_3 \setminus B_2 )}
    \leq C\Bigl(\|G\|_{L^2(D_\rho)}+\|u_k G\|_{L^2(D_\rho)}\Bigr).
\end{equation}
Both terms on the right are uniformly bounded. Indeed, for the second term, H\"older's inequality gives
\[
\|u_k G\|_{L^2(D_\rho)}
\leq \|u_k\|_{L^4(D_\rho)}\|G\|_{L^4(D_\rho)}\leq C.
\]
Therefore $\|\nabla G\|_{L^2(B_3 \setminus B_2 )}\leq C$ uniformly in
$h,k,x_0$.

Together with the $L^4$ bound from Step~3, this establishes that
$\{G_{h,k,x_0}\}$ is uniformly bounded in
$L^4(B_3\setminus B_2)\cap W^{1,2}(B_3\setminus B_2)$
with respect to all parameters $h$, $k$, and $x_0$.
This completes the proof of (P3).
\end{proof}

    \section{Quantitative local regularity}\label{sec:proof-main}

In this section, we prove the working-scale estimate
\eqref{eq:B8-working-scale}, and thereby complete the proof of
Theorem~\ref{thm:local-regularity}.

  	\subsection{A uniform upper bound for the total head pressure}
\label{sec:Hbound}

  We  use the uniform estimates of Green function in Section  \ref{sec:Green} to obtain a one-sided bound for the
total head pressure.
By Lemma~\ref{lem:weak-extension}, in every dimension $n\geq5$,
\[
u\in W^{1,2}(B_4)\cap L^4(B_4),
\quad
H\in L^2(B_4).
\]
In particular,
\[
uH\in L^{4/3}(B_4).
\]
Since $(u,p)$ is smooth away from the axis, the classical head-pressure
identity
\[
-\Delta H+u\cdot\nabla H
=
-\frac12
\sum_{i,j=1}^n
(\partial_i u_j-\partial_j u_i)^2
\leq0
\]
holds in $B_4\setminus\{x'=0\}$.
We first show that this inequality extends across the axis in the
distributional sense.  This step is slightly delicate in the borderline
dimension $n=5$.

Let $\phi\in C_c^\infty(B_4)$ be nonnegative, and let
$\chi_\varepsilon$ be the transverse cutoff introduced in the proof of
Lemma~\ref{lem:weak-extension}.  It holds that 

\begin{equation}
\label{eq:head-cutoff-norms}
\begin{aligned}
\|\nabla\chi_\varepsilon\|_{L^2(B_4)}
\leq C\varepsilon^{(n-3)/2},\quad
\|\Delta\chi_\varepsilon\|_{L^2(B_4)}
\leq C\varepsilon^{(n-5)/2},\quad
\|\nabla\chi_\varepsilon\|_{L^4(B_4)}
\leq C\varepsilon^{(n-5)/4}.
\end{aligned}
\end{equation}
In particular, when $n=5$ the last two quantities are merely uniformly bounded, rather than converging to zero.

Indeed, $\phi\chi_\varepsilon$ is a nonnegative smooth test function
compactly supported in
$B_4\setminus\{x'=0\}$.  Hence
\[
\int_{B_4}
H\bigl(
-\Delta(\phi\chi_\varepsilon)
-u\cdot\nabla(\phi\chi_\varepsilon)
\bigr)\,dx
\leq0.
\]
Expanding the derivatives gives
\begin{equation}
\label{eq:head-cutoff-expanded}
\begin{aligned}
\int_{B_4}
H\chi_\varepsilon
\bigl(-\Delta\phi-u\cdot\nabla\phi\bigr)\,dx
\leq
2\int_{B_4}
H\,\nabla\phi\cdot\nabla\chi_\varepsilon\,dx
+
\int_{B_4}
H\phi\,\Delta\chi_\varepsilon\,dx
+
\int_{B_4}
Hu\cdot\nabla\chi_\varepsilon\,\phi\,dx .
\end{aligned}
\end{equation}
Set
\[
A_\varepsilon
:=
B_4\cap
\{\varepsilon<|x'|<2\varepsilon\}.
\]
We now let $\varepsilon\downarrow0$.  The first  term on the right-hand-side of \eqref{eq:head-cutoff-expanded} is estimated as
\[
\begin{aligned}
\left|
\int_{B_4}
H\,\nabla\phi\cdot\nabla\chi_\varepsilon\,dx
\right|\leq
C\|H\|_{L^2(A_\varepsilon)}
\|\nabla\chi_\varepsilon\|_{L^2(B_4)}
\leq
C\varepsilon^{(n-3)/2}
\|H\|_{L^2(A_\varepsilon)}
\longrightarrow0.
\end{aligned}
\]
For the second  term on the right-hand-side of \eqref{eq:head-cutoff-expanded}, using~\eqref{eq:head-cutoff-norms}, one has
\[
\begin{aligned}
\left|
\int_{B_4}
H\phi\,\Delta\chi_\varepsilon\,dx
\right|\leq
C\|H\|_{L^2(A_\varepsilon)}
\|\Delta\chi_\varepsilon\|_{L^2(B_4)}
\leq
C\varepsilon^{(n-5)/2}
\|H\|_{L^2(A_\varepsilon)}
\longrightarrow0.
\end{aligned}
\]
Here, in the borderline case $n=5$, the factor
$\varepsilon^{(n-5)/2}$ is equal to one, but
\(
\|H\|_{L^2(A_\varepsilon)}\longrightarrow0
\)
by the absolute continuity of the $L^2$ integral.
Similarly, since $uH\in L^{4/3}(B_4)$,
\[
\begin{aligned}
\left|
\int_{B_4}
Hu\cdot\nabla\chi_\varepsilon\,\phi\,dx
\right|
\leq
C
\|uH\|_{L^{4/3}(A_\varepsilon)}
\|\nabla\chi_\varepsilon\|_{L^4(B_4)}\leq
C\varepsilon^{(n-5)/4}
\|uH\|_{L^{4/3}(A_\varepsilon)}
\longrightarrow0.
\end{aligned}
\]
Since $\chi_\varepsilon\to1$ almost everywhere and
\(
H\bigl(-\Delta\phi-u\cdot\nabla\phi\bigr)\in L^1(B_4),
\)
dominated convergence in the left-hand side of
\eqref{eq:head-cutoff-expanded} therefore yields
\begin{equation}
\label{eq:head-weak}
\int_{B_4}
H(-\Delta\phi-u\cdot\nabla\phi)\,dx
\leq0
\end{equation}
for every nonnegative $\phi\in C_c^\infty(B_4)$.

We now use  the approximate Green function as a test function in \eqref{eq:head-weak}.
Fix
\(
x_0\in B_1\setminus\{x'=0\},
\)
and let
\(
G=G_{h,k,x_0}
\)
be the solution constructed in Section~\ref{sec:Green}.  Choose
$\zeta\in C_c^\infty(B_3)$ such that
\[
0\leq\zeta\leq1,
\quad
\zeta=1\quad\text{on }B_2,
\quad
\operatorname{supp}\nabla\zeta
\subset
A:=B_3\setminus\overline{B_2}.
\]
Since $x_0\in B_1$ and $h<h_0<1/2$,
$\operatorname{supp}\delta_{h,x_0}\subset B_{3/2}\subset B_2$; hence
$\zeta\delta_{h,x_0}=\delta_{h,x_0}$.  Moreover, $G\geq0$ and $G$ is smooth
for fixed $h,k$, so $G\zeta$ is an admissible nonnegative test function in
\eqref{eq:head-weak}.  Therefore
\begin{equation}
\label{eq:head-Gzeta-start}
\int_{B_4}
H\bigl(
-\Delta(G\zeta)-u\cdot\nabla(G\zeta)
\bigr)\,dx
\leq0.
\end{equation}
Using
\[
-\Delta G-u_k\cdot\nabla G
=
\delta_{h,x_0},
\]
we have
\[
-\Delta G-u\cdot\nabla G
=
\delta_{h,x_0}
+
(u_k-u)\cdot\nabla G.
\]
Hence
\begin{align*}
-\Delta(G\zeta)-u\cdot\nabla(G\zeta)
=
\zeta\delta_{h,x_0}
+\zeta(u_k-u)\cdot\nabla G
-2\nabla G\cdot\nabla\zeta
-G\Delta\zeta
-Gu\cdot\nabla\zeta.
\end{align*}
It follows from~\eqref{eq:head-Gzeta-start} that
\begin{align}
\int_{B_4}\delta_{h,x_0}H\,dx
\leq{}&
\left|
\int_{B_4}
H(u-u_k)\cdot\nabla G\,\zeta\,dx
\right|+
2\left|
\int_A
H\nabla G\cdot\nabla\zeta\,dx
\right| \nonumber \\
&+
\left|
\int_A
HG\Delta\zeta\,dx
\right|
+
\left|
\int_A
HGu\cdot\nabla\zeta\,dx
\right|.
\label{eq:head-green-detailed}
\end{align}
The annular terms are bounded by
\begin{align*}
2\left|
\int_AH\nabla G\cdot\nabla\zeta\,dx
\right|
+
\left|
\int_AHG\Delta\zeta\,dx
\right|
\leq
C\|H\|_{L^2(A)}
\left(
\|\nabla G\|_{L^2(A)}
+\|G\|_{L^2(A)}
\right),
\end{align*}
while
\[
\left|
\int_A
HGu\cdot\nabla\zeta\,dx
\right|
\leq
C
\|H\|_{L^2(A)}
\|u\|_{L^4(A)}
\|G\|_{L^4(A)}.
\]
By property~(P3), these quantities are bounded uniformly in
$h,k,x_0$.

For the remaining  terms, property~(P2) gives, for each
fixed $h>0$,
\[
\begin{aligned}
\left|
\int_{B_4}
H(u-u_k)\cdot\nabla G\,\zeta\,dx
\right|
\leq
\|H\|_{L^2(B_4)}
\|u-u_k\|_{L^4(B_4)}
\|\nabla G\|_{L^4(B_4)}
\longrightarrow0
\quad (k\to\infty),
\end{aligned}
\]
uniformly for $x_0\in B_1$.
Letting $k\to\infty$ for each fixed $h$ gives
\[
 \int_{B_4}\delta_{h,x_0}H\,dx\leq C(n,M),
\]
with a constant independent of $h$ and $x_0$. For each fixed
$x_0\in B_1\setminus\{x'=0\}$, the function $H$ is smooth near
$x_0$. Letting $h\downarrow0$ therefore gives
$H(x_0)\leq C(n,M)$ at every such point.
Since the axis $\{x'=0\}$ has Lebesgue measure zero, this gives
\begin{equation}
\label{eq:Hbound}
\|H_+\|_{L^\infty(B_1)}
\leq C(n,M).
\end{equation}

\subsection{From \texorpdfstring{$H_+\in L^\infty$}{bounded positive head pressure} to regularity of \texorpdfstring{$u$}{u}}\label{sec:regularity}
	A crucial step in our argument is the deduction of uniform bounds for $u$
	and $\nabla u$ from the $L^\infty$ estimate for $H_+$.  The following
	localized weighted estimate is the key ingredient underlying the
	regularity criterion of Frehse-R\r{u}\v{z}i\v{c}ka
	\cite[Theorem~1.8]{FrehseRuzicka94}.  We include a localized proof in the
	appendix in the form needed here.

	\begin{proposition}[Interior weighted estimate]
		\label{prop:FR-interior-weighted}
		Let \(n\geq5\), let \(\mathcal O\subset\mathbb R^n\) be open, and suppose that $B_{4R_*}(z)\Subset\mathcal O.$
		Let $u\in W^{1,2}(B_{4R_*}(z);\mathbb R^n),$ $
		p\in W^{1,n/(n-1)}(B_{4R_*}(z))$
		be a weak solution of
		\[
		-\Delta u+(u\cdot\nabla)u+\nabla p=0,
		\quad
		\operatorname{div}u=0
		\quad\text{in }B_{4R_*}(z).
		\]
		Assume, in addition, that
		\[ H\in L^2(B_{4R_*}(z)), \quad
		H_+\in L^q(B_{4R_*}(z))
		\quad\text{for some }q>\frac n2,
		\]
		and that the local Navier-Stokes inequality
		\begin{equation}
			\int_{B_{4R_*}(z)}
			\nabla u:\nabla(u\varphi)\,dx
			\leq
			\int_{B_{4R_*}(z)}
			H\,u\cdot\nabla\varphi\,dx
			\label{eq:FR-local-NS-inequality}
		\end{equation}
		holds for every nonnegative
		$
		\varphi\in C_c^\infty(B_{4R_*}(z)).
		$

		Then there exist constants
		$
		\beta\in(0,1),\, C<\infty,
		$
		depending only on
		\[
		n,\quad q,\quad R_*,
		\quad
		\|u\|_{W^{1,2}(B_{4R_*}(z))},
		\quad
		\|p\|_{W^{1,n/(n-1)}(B_{4R_*}(z))},
		\quad
		\|H_+\|_{L^q(B_{4R_*}(z))},
		\]
		such that
		\begin{equation}
			\int_{B_\rho(x_0)}
			\frac{|\nabla u(x)|^2}
			{|x-x_0|^{n-4}}\,dx
			\leq C\rho^\beta
			\label{eq:FR-interior-weighted-decay}
		\end{equation}
		for every $
		x_0\in B_{R_*}(z), \
		0<\rho<\frac{R_*}{4}.
		$
	\end{proposition}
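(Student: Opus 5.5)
We follow the Frehse--R\r{u}\v{z}i\v{c}ka strategy. The plan is to test the local Navier--Stokes inequality \eqref{eq:FR-local-NS-inequality} with a regularized singular weight, to control the pressure-type term by the one-sided bound on $H$, and then to iterate in $\rho$ to get the power decay \eqref{eq:FR-interior-weighted-decay}.

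\emph{Weighted inequality.} Fix $x_0\in B_{R_*}(z)$ and, for $\epsilon>0$, set $g_\epsilon(x)=(|x-x_0|^2+\epsilon^2)^{-(n-4)/2}$. Take $\varphi=g_\epsilon\eta$, where $\eta$ is a cutoff equal to $1$ on $B_{2\rho}(x_0)$ and supported in $B_{4\rho}(x_0)$. Writing $\nabla u:\nabla(u\varphi)=|\nabla u|^2\varphi+\frac12\nabla|u|^2\cdot\nabla\varphi$ and absorbing $\frac12|u|^2$ into $H$, we expect
\[
\int|\nabla u|^2 g_\epsilon\eta
\le \int \tfrac12|u|^2\Delta(g_\epsilon\eta)+\int H\,u\cdot\nabla(g_\epsilon\eta).
\]
Since $n\ge5$, $-\Delta g_\epsilon\ge 0$ up to a harmless correction: $g_\epsilon$ is a power of a regularized distance with exponent $n-4<n-2$, so $\Delta g_\epsilon\le 0$ and it behaves like $-c|x-x_0|^{-(n-2)}$. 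The interior term $\frac12\int|u|^2\Delta g_\epsilon\,\eta$ is therefore nonpositive and yields a good term $c\int |u|^2|x-x_0|^{2-n}$. The terms carrying derivatives of $\eta$ live on the annulus $B_{4\rho}\setminus B_{2\rho}$.

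\emph{Main obstacle: the $H u\cdot\nabla g_\epsilon$ term.} We split $H=H_+-H_-$. For the part of $\int H u\cdot\nabla g_\epsilon$ coming from $H_-$, the Frehse--R\r{u}\v{z}i\v{c}ka trick is to use $\operatorname{div}u=0$: we have $\int Hu\cdot\nabla g = -\int g\,u\cdot\nabla H$, and $u\cdot\nabla H=u\cdot\nabla p+\frac12 u\cdot\nabla|u|^2$. Rather than argue pointwise, we would instead use the radial structure: $u\cdot\nabla g_\epsilon=-(n-4)\,\frac{(x-x_0)\cdot u}{(|x-x_0|^2+\epsilon^2)}\,g_\epsilon$. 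The sign-indefinite product $H(x-x_0)\cdot u$ is then bounded, via Young, by $\delta|u|^2|x-x_0|^{2-n}+C_\delta H^2|x-x_0|^{4-n}$. Since we only control $H_+\in L^q$ with $q>n/2$ and not $H_-$ pointwise, the $H_-$ part has to be handled through the equation of $H$, i.e.\ the pressure equation $-\Delta p=\partial_i\partial_j(u_iu_j)$. We would try Calder\'on--Zygmund estimates on $p$ in weighted Morrey spaces, bounding $p$ locally by $|u|^2$ plus a harmonic remainder. This expresses $\int |H||u||x-x_0|^{3-n}$ in terms of $\int|u|^3|x-x_0|^{3-n}$, which Hardy/Sobolev in the weight bounds by the good terms with a small constant when $\rho$ is small, plus quantities controlled by $\|u\|_{W^{1,2}}$ and $\|p\|_{W^{1,n/(n-1)}}$. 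The $H_+$ part is estimated by H\"older: $q>n/2$ gives $H_+|x-x_0|^{4-n}\in L^1$ on $B_\rho$, with norm at most $C\rho^{\gamma}$, $\gamma=4-2n/q'\dots>0$, which supplies a positive power of $\rho$.

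\emph{Hole filling and iteration.} Let $\Phi(\rho)=\int_{B_\rho}|\nabla u|^2|x-x_0|^{4-n}+\int_{B_\rho}|u|^2|x-x_0|^{2-n}$, after letting $\epsilon\to0$ by monotone convergence. The above gives
\[
\Phi(2\rho)\le C\bigl(\Phi(4\rho)-\Phi(2\rho)\bigr)+C\rho^{\gamma},
\]
where the annular terms are controlled by $\Phi(4\rho)-\Phi(2\rho)$ because $|x-x_0|\sim\rho$ there. Widman hole filling then gives $\Phi(2\rho)\le\theta\Phi(4\rho)+C\rho^\gamma$ with $\theta=C/(C+1)<1$, and iteration yields $\Phi(\rho)\le C\rho^\beta$. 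The initial value $\Phi(R_*/4)$ is finite and controlled by the listed norms, using $u\in W^{1,2}$, Hardy's inequality, and $n\ge5$ for local integrability of the weights. The delicate step is the $H_-$/pressure bound, which must produce a small constant times $\Phi$ and not merely a bounded quantity.
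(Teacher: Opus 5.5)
\textbf{The gap is in your treatment of $H_-$.} After the Calder\'on--Zygmund step you are left with a cubic quantity such as $\int_{B_{4\rho}}|u|^3|x-x_0|^{3-n}\,dx$ (or, from your Young step, the quartic $\int H^2|x-x_0|^{4-n}\,dx$). Your hole-filling inequality requires absorbing it into $\Phi$ with a small constant, and no such smallness is available.

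Under the Navier--Stokes scaling $u\mapsto\lambda u(\lambda x)$, both this quantity and $\Phi(\rho)$ are invariant, so taking $\rho$ small gains nothing. A cubic term can be dominated by the quadratic $\Phi$ with a small factor only if some scale-invariant norm of $u$ is small. The data do not provide this: for $n\ge5$ the energy is supercritical and $W^{1,2}$ does not embed in $L^n$. At best, weighted Caffarelli--Kohn--Nirenberg inequalities give a superlinear bound like $\Phi^{3/2}$, and not even that for the quartic term. Such a bound yields only an $\varepsilon$-regularity statement at points where $\Phi$ is already small, not a bound with constants depending only on the listed norms.

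As a sanity check, Calder\'on--Zygmund does not see the sign of $H$. If your step worked, it would handle $H_+$ as well and the hypothesis $H_+\in L^q$ would be superfluous. You would then get \eqref{eq:FR-interior-weighted-decay}, hence full regularity via the Morrey bootstrap, for every weak solution satisfying the local energy inequality in dimension $n\ge5$, which is not known. So the one-sided bound must be used to control $H_-$ itself, and the cubic term must be avoided rather than absorbed.

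The paper does both. First, it tests the momentum equation with $\nabla(\eta^2w_h)$, $w_h=(r^2+h^2)^{-(n-4)/2}$; that is, it uses $\int p\,\Delta\varphi+\int u_iu_j\,\partial_{ij}\varphi=0$ with $\varphi=\eta^2w_h$. The terms in which both derivatives fall on $w_h$ combine exactly into
\[
-2(n-4)\int\frac{H\eta^2}{(r^2+h^2)^{\frac{n-2}{2}}}+(n-4)(n-2)\int\frac{|u\cdot(x-x_0)|^2\eta^2}{(r^2+h^2)^{\frac{n}{2}}}-(n-4)(n-2)\int\frac{p\,\eta^2h^2}{(r^2+h^2)^{\frac{n}{2}}}.
\]
Substitute $-H=|H|-2H_+$ and $-p=|H|-2H_++\frac12|u|^2$. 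Every term is then nonnegative except those containing $H_+$, while the $\nabla\eta$ terms are annular. Hence $\int_{B_R}|H||x-x_0|^{2-n}$ is bounded by annular terms plus $CR^{2-n/q}$ (Lemma~\ref{lem:FR-fixed-scale-H} and Step~5 of the appendix).

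Second, the term $\int Hu\cdot\nabla(\eta^2w_h)$ in the energy inequality is never estimated through $|H||u|$. Write $H=-\Delta g$ with $g\in W^{1,2}_0(B_{3R_*}(z))$. The weighted $|H|$ bound gives $\|g\|_{L^\infty}\le C$ via the Green function, and then $\sup_{x_0}\int_{B_{R_*}(x_0)}|\nabla g|^2|x-x_0|^{2-n}\le C$. One integration by parts turns the term into $\int\partial_kg\,\partial_ku_i\,\partial_i\psi+\int\partial_kg\,u_i\,\partial_{ki}\psi$ with $\psi=\eta^2w_h$. This is quadratic and is handled by Young's inequality, using $|\nabla w_h|^2/w_h+|D^2w_h|\lesssim(r^2+h^2)^{-(n-2)/2}$.

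The hole filling is then run on a combined functional that also contains $\int|\nabla g|^2r^{2-n}$, $\int|H|r^{2-n}$ and $\int|u\cdot(x-x_0)|^2r^{-n}$, because the three weighted estimates feed into one another. This machinery is also what makes $\Phi$ finite at the starting scale. Your appeal to $u\in W^{1,2}$ and Hardy does not give that: the weight $|x-x_0|^{4-n}$ is unbounded, and Hardy bounds $\int|u|^2r^{2-n}$ by $\int|\nabla u|^2r^{4-n}$, not the latter by $\|u\|_{W^{1,2}}$.
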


	The proof follows the weighted argument in
	\cite[Sections~2-3]{FrehseRuzicka94}; the details are given in
	Appendix~\ref{sec:appendix}.
	We now apply Proposition~\ref{prop:FR-interior-weighted} with
	$z=0$, $R_*=\frac14$, and $q=n$. Lemma~\ref{lem:weak-extension}
	provides all the required Sobolev bounds and the local energy inequality,
	while \eqref{eq:Hbound} gives
	\[
	 H_+\in L^\infty(B_1),
	 \quad \|H_+\|_{L^\infty(B_1)}\le C(n,M).
	\]
	Hence there exist $\beta\in(0,1)$ and $C=C(n,M)$ such that
	\begin{equation}\label{eq:grad-morrey-decay}
	 \int_{B_\rho(x_0)}|\nabla u|^2\,dx
	 \le C\rho^{n-4+\beta}
	\end{equation}
	for every $x_0\in B_{1/4}$ and $0<\rho<1/16$.

	Set
	\begin{equation}\label{eq:morrey-lambda-d}
	 \lambda:=n-4+\beta,
	 \quad d:=n-\lambda=4-\beta\in(3,4).
	\end{equation}
	For a ball $D\Subset B_{1/4}$, we use the standard Morrey norm
	\[
	 \|f\|_{L^{t,\lambda}(D)}^t
	 :=\sup_{x\in D,\,0<\rho\le \operatorname{diam}D}
	 \rho^{-\lambda}\int_{B_\rho(x)\cap D}|f|^t\,dx.
	\]
	Together with the global $L^2$ bound from
	Lemma~\ref{lem:weak-extension}, \eqref{eq:grad-morrey-decay} yields
	\begin{equation}\label{eq:grad-morrey-start}
	 \nabla u\in L^{2,\lambda}_{\rm loc}(B_{1/4}),
	 \quad
	 \|\nabla u\|_{L^{2,\lambda}(B_{7/32})}\le C(n,M).
	\end{equation}

	We shall use two standard local estimates in Morrey spaces.  First,
	the proof of Theorem~A.2 of Li-Yang~\cite{LiYang22}, after scaling and
	localization, yields the following form on nested balls $D'\Subset D$:
	if $f\in L^1(D)$ and $\nabla f\in L^{a,\lambda}(D)$ with
	$1<a<\infty$, then
	\begin{equation}\label{eq:morrey-sobolev-local}
	 \|f\|_{L^{a^*,\lambda}(D')}
	 \le C\Bigl(
	 \|\nabla f\|_{L^{a,\lambda}(D)}+
	 \|f\|_{L^1(D)}\Bigr),
	\end{equation}
	where, with $d=n-\lambda$,
	\begin{equation}\label{eq:morrey-sobolev-exponent}
	 \frac1{a^*}=\frac1a-\frac1d
	 \quad\text{if }1<a<d,
	\end{equation}
	while if $a\ge d$, the exponent $a^*$ may be chosen arbitrarily large
	but finite.  We shall also use the following  version of the
	local Stokes-Morrey estimate.

	\begin{lemma}[Local Stokes-Morrey estimate]\label{lem:weak-stokes-morrey}
	Let $D'\Subset D$ be balls, $1<r<\infty$, and $0\le\lambda<n$.
	Suppose that, in the sense of distributions on $D$,
	\[
	 -\Delta v+\nabla\pi=F,\quad \operatorname{div}v=0,
	\]
	where $F\in L^{r,\lambda}(D)$ and $v\in L^1(D)$.  Then
	\begin{equation}\label{eq:weak-stokes-morrey}
	 \|\nabla^2v\|_{L^{r,\lambda}(D')}
	 +\|\nabla\pi\|_{L^{r,\lambda}(D')}
	 \le C\Bigl(
	 \|F\|_{L^{r,\lambda}(D)}+\|v\|_{L^1(D)}\Bigr),
	\end{equation}
	where $C$ depends only on $n,r,\lambda$ and the two nested balls;
	the pressure is understood modulo an additive constant.
	\end{lemma}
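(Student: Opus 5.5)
The plan is to reduce Lemma~\ref{lem:weak-stokes-morrey} to the classical interior Stokes estimates in $L^r$ and to prove the Morrey version by a Campanato-type decomposition on small balls. The first task is to upgrade regularity: since $v\in L^1(D)$ and $F\in L^{r,\lambda}(D)\subset L^r(D)$, we mollify and use the standard local theory (interior Cattabriga/Galdi estimates for the Stokes system, together with the harmonicity of the vorticity-free part when $F=0$). This gives $v\in W^{2,r}_{\rm loc}(D)$ and $\nabla\pi\in L^r_{\rm loc}(D)$, with
\[
\|\nabla^2 v\|_{L^r(D'')}+\|\nabla\pi\|_{L^r(D'')}\le C\bigl(\|F\|_{L^r(D)}+\|v\|_{L^1(D)}\bigr)
\]
on an intermediate ball $D'\Subset D''\Subset D$. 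The $L^1$ norm of $v$ controls the lower-order terms because any distributional solution of the homogeneous Stokes system is smooth, with all derivatives bounded on compact subsets by its $L^1$ norm.

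Next we fix $x\in D'$ and $0<\rho\le\rho_0=\tfrac12\operatorname{dist}(D',\partial D'')$. On $B_{2\rho}(x)$ we split $v=w+z$ and $\pi=\pi_1+\pi_2$. Here $(w,\pi_1)$ solves the Stokes system with forcing $F\mathbf 1_{B_{2\rho}(x)}$ in the whole space; we construct it via the Stokes fundamental solution (Newtonian potential composed with the Leray projection). Then $(z,\pi_2)$ solves the homogeneous Stokes system in $B_{2\rho}(x)$. Calder\'on--Zygmund theory gives
\[
\|\nabla^2w\|_{L^r}+\|\nabla\pi_1\|_{L^r}\le C\|F\|_{L^r(B_{2\rho}(x))}\le C\rho^{\lambda/r}\|F\|_{L^{r,\lambda}(D)}.
\]
For $(z,\pi_2)$, $\nabla^2 z$ and $\nabla\pi_2$ are themselves solutions of the homogeneous Stokes system, being derivatives of one. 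The standard Campanato decay inequality therefore applies: for $0<\sigma\le\rho$,
\[
\int_{B_\sigma}\bigl(|\nabla^2 z|^r+|\nabla\pi_2|^r\bigr)\le C\Bigl(\frac\sigma\rho\Bigr)^n\int_{B_\rho}\bigl(|\nabla^2 z|^r+|\nabla\pi_2|^r\bigr).
\]
Combining the two pieces, $\Phi(\sigma):=\int_{B_\sigma(x)}(|\nabla^2v|^r+|\nabla\pi|^r)$ satisfies
\[
\Phi(\sigma)\le C\bigl(\tfrac\sigma\rho\bigr)^n\Phi(\rho)+C\rho^\lambda\|F\|^r_{L^{r,\lambda}(D)}.
\]
Since $\lambda<n$, the standard iteration lemma (Giaquinta) converts this into
\[
\Phi(\sigma)\le C\sigma^\lambda\bigl(\rho_0^{-\lambda}\Phi(\rho_0)+\|F\|^r_{L^{r,\lambda}(D)}\bigr).
\]
The term $\Phi(\rho_0)$ is bounded by the $L^r$ estimate from the first step. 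Radii between $\rho_0$ and $\operatorname{diam}D'$ are handled trivially by the same $L^r$ bound.

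The main obstacle is the Campanato decay with the exponent $n$ for the homogeneous Stokes system. We handle it by the mean-value/$L^\infty$ interior bound $\sup_{B_{\rho/2}}|\nabla^2 z|\le C\rho^{-n/r}\|\nabla^2 z\|_{L^r(B_\rho)}$. We obtain this bound from the smoothness estimates for Stokes solutions, applied to $z$ minus a suitable affine and quadratic correction, with the pressure normalized by its mean. A second technical point is that $\pi$ is determined only up to a constant, so only $\nabla\pi$ enters. Because $v$ is a priori only $L^1$, the decomposition must also be justified through mollification. The constants are uniform because the mollified solutions satisfy the same equation with mollified $F$, whose Morrey norm is controlled on slightly smaller balls.
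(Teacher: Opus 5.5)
Your proof is correct, but it takes a genuinely different route from the paper's.

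\textbf{The paper's route.} The paper does not prove the Morrey estimate itself. It mollifies the distributional system on intermediate balls $D'\Subset D_1\Subset D_2\Subset D$ and uses that convolution is bounded in Morrey spaces on interior subdomains. After covering and rescaling, it applies the ready-made estimate \cite[Theorem~A.3]{LiYang22} to the smooth mollified pairs. It then passes to the limit by weak compactness in $L^r_{\rm loc}$ and lower semicontinuity.

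\textbf{Your route.} You derive the estimate from first principles:
\begin{enumerate}
\item interior $L^r$ Calder\'on--Zygmund bounds;
\item a splitting on each $B_{2\rho}(x)$ into the whole-space potential with forcing $F\mathbf 1_{B_{2\rho}(x)}$ plus a homogeneous Stokes remainder;
\item the $(\sigma/\rho)^n$ decay for the remainder;
\item Giaquinta's iteration lemma, which is exactly where $\lambda<n$ enters.
\end{enumerate}

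\textbf{What each buys.} Your approach is self-contained and shows transparently why $1<r<\infty$ and $\lambda<n$ are needed. It also works directly for distributional solutions with $v\in L^1$. The paper's approach is shorter, but it outsources the core estimate to an external theorem stated for smooth solutions, which is why it needs the mollification and compactness step.

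\textbf{Two optional simplifications.}
\begin{itemize}
\item The decay for the remainder needs no polynomial corrections. A homogeneous Stokes velocity is biharmonic ($\Delta P=0$, hence $\Delta^2Z=\nabla\Delta P=0$), and $(\partial_{ij}z,\partial_{ij}\pi_2)$ is again such a pair. So $\sup_{B_{\rho/2}}|\nabla^2 z|\le C\rho^{-n/r}\|\nabla^2 z\|_{L^r(B_\rho)}$ follows directly. Your correction argument is also valid: the matched quadratic has affine divergence with zero mean and zero gradient, so its divergence vanishes.
\item Mollification is not really needed. Subtracting the whole-space potential of $F\mathbf 1_D$, whose $L^1(D)$ norm is at most $C\|F\|_{L^1(D)}$, leaves a homogeneous distributional Stokes solution. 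By Weyl's lemma this is smooth, with interior bounds in terms of its velocity's $L^1$ norm.
\end{itemize}
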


	\begin{proof}
	Choose intermediate balls $D'\Subset D_1\Subset D_2\Subset D$ and
	mollify the distributional Stokes system inside $D_2$.  The mollified
	pair $(v_\varepsilon,\pi_\varepsilon)$ satisfies the smooth Stokes system
	with right-hand side $F_\varepsilon$ on $D_1$.  Convolution is bounded in
	Morrey spaces on interior subdomains, so
	\[
	 \|F_\varepsilon\|_{L^{r,\lambda}(D_1)}
	 \le C\|F\|_{L^{r,\lambda}(D)},
	 \quad
	 \|v_\varepsilon\|_{L^1(D_1)}\le C\|v\|_{L^1(D)}.
	\]
	After covering $D'$ by finitely many balls and rescaling
	Theorem~A.3 of Li-Yang~\cite{LiYang22}, we obtain
	\eqref{eq:weak-stokes-morrey} for the mollified pair with a constant
	independent of $\varepsilon$.  Since $r>1$, weak compactness in
	$L^r_{\rm loc}$, identification of the distributional derivatives, and
	lower semicontinuity on each ball give \eqref{eq:weak-stokes-morrey} in
	the limit $\varepsilon\downarrow0$.
	\end{proof}

\begin{proof}[Proof of Theorem \ref{thm:local-regularity}]
We complete the proof by a finite bootstrap in Morrey spaces.
Assume that, on a ball
	$D\Subset B_{1/4}$,
	\begin{equation}\label{eq:morrey-induction-hyp}
	 \nabla u\in L^{t,\lambda}(D),
	 \quad 2\le t<d,
	\end{equation}
	with quantitative control.  Applying
	\eqref{eq:morrey-sobolev-local} to $u$ on a slightly smaller ball gives
	\begin{equation}\label{eq:morrey-u-s}
	 u\in L^{s,\lambda},
	 \quad
	 \frac1s=\frac1t-\frac1d.
	\end{equation}
	At each step we combine this improved integrability of $u$ with the
	fixed starting estimate
	$\nabla u\in L^{2,\lambda}$ from
	\eqref{eq:grad-morrey-start}.  H\"older's inequality on each ball then
	gives that
	\begin{equation}\label{eq:morrey-force-r}
	 (u\cdot\nabla)u\in L^{r,\lambda},
	 \quad
	 \frac1r=\frac1s+\frac12
	 =\frac1t+\frac12-\frac1d.
	\end{equation}
	Indeed, the powers of the radius match because
	$r/s+r/2=1$.  Since $t<d$ and $d>3$, we have $1<r<2<d$.
	By H\"older's inequality on balls, the  estimate
	$\nabla u\in L^{2,\lambda}$ also implies
	$\nabla u\in L^{r,\lambda}$ locally for every $r<2$; hence the
	Morrey-Sobolev estimate \eqref{eq:morrey-sobolev-local} may be applied to $\nabla u$ after the use of the Stokes
	estimate below.  Applying Lemma~\ref{lem:weak-stokes-morrey} to
	\[
	 -\Delta u+\nabla p=-(u\cdot\nabla)u
	\]
	and then applying \eqref{eq:morrey-sobolev-local} to $\nabla u$ yields
	\begin{equation}\label{eq:morrey-improvement}
	 \nabla u\in L^{t_+,\lambda},
	 \quad
	 \frac1{t_+}=\frac1r-\frac1d
	 =\frac1t+\frac12-\frac2d
	 =\frac1t-\frac{\beta}{2(4-\beta)}.
	\end{equation}
	Thus every subcritical step decreases the reciprocal exponent by the same
	positive amount
	\[
	 \delta:=\frac{\beta}{2(4-\beta)}.
	\]

	Starting from $t_0=2$, define, as long as $t_j<d$,
	\[
	 \frac1{t_{j+1}}=\frac1{t_j}-\delta.
	\]
	After at most $\lceil(2-\beta)/\beta\rceil$ subcritical steps,
	we reach $t_j\geq d$. Indeed,
	$1/t_j=1/2-j\delta$ as long as the preceding exponent is below $d$.
	The new reciprocal remains positive at the final step because
	$1/d-\delta=(2-\beta)/(2d)>0$.  To keep the constants
	quantitative, fix in advance a number $N=N(\beta)$ large enough to
	accommodate all the Morrey-Sobolev and Stokes localizations in this finite
	iteration, and choose nested concentric balls
	\[
	 B_{5/32}\Subset D_N\Subset D_{N-1}\Subset\cdots\Subset D_0
	 \Subset B_{7/32}
	\]
	with equal positive gaps between consecutive radii.  All localization
	constants then depend only on $N$ and hence, through $\beta$, only on the
	data of the local theorem.

	If some $t_j>d$, the desired supercritical Morrey exponent has already
	been reached.  If instead $t_j=d$, the critical case of
	\eqref{eq:morrey-sobolev-local} gives
	$u\in L^{s,\lambda}_{\rm loc}$ for every finite $s$.  Choose
	\[
	 s>\frac{2d}{4-d}=\frac{2d}{\beta},
	 \quad
	 \frac1r=\frac1s+\frac12.
	\]
	Then $1<r<2<d$ and
	\[
	 \frac1r-\frac1d<\frac1d.
	\]
	One final application of Lemma~\ref{lem:weak-stokes-morrey} and
	\eqref{eq:morrey-sobolev-local} therefore gives, in either case,
	\begin{equation}\label{eq:morrey-supercritical}
	 \nabla u\in L^{t_*,\lambda}(B_{5/32})
	 \quad\text{for some }t_*>d=n-\lambda,
	\end{equation}
	with norm bounded by a constant depending only on $n$ and $M$.

	Since $t_*+\lambda>n$, the Morrey-Campanato embedding applied to
	\eqref{eq:morrey-supercritical} gives
	\begin{equation}\label{eq:u-holder-bound}
	 \|u\|_{C^{0,\alpha_*}(B_{1/8})}
	 \le C(n,M),
	 \quad
	 \alpha_*=1-\frac{d}{t_*}>0.
	\end{equation}
	In particular, $u$ is uniformly bounded on $B_{1/8}$.

	It remains to apply standard local Stokes regularity.  First write
	\[
	 -\Delta u+\nabla p=-\operatorname{div}(u\otimes u).
	\]
	Since $u\in L^\infty(B_{1/8})$, one has
	$u\otimes u\in L^q(B_{1/8})$ for every finite $q$.  The interior Stokes
	estimate in divergence form gives, for every finite $q$,
	\[
	 \|\nabla u\|_{L^q(B_{3/32})}
	 +\|p-(p)_{B_{3/32}}\|_{L^q(B_{3/32})}
	 \le C(q,n,M).
	\]
	Consequently $(u\cdot\nabla)u\in L^q(B_{3/32})$.  Applying the
	non-divergence-form Stokes estimate on $B_{1/16}$ gives
	\[
	 \|u\|_{W^{2,q}(B_{1/16})}
	 +\|p-(p)_{B_{1/16}}\|_{W^{1,q}(B_{1/16})}
	 \le C(q,n,M).
	\]
	Choose $q>n$.  Sobolev embedding then yields
	\begin{equation}\label{Bound}
	 \|u\|_{L^\infty(B_{1/16})}
	 +\|\nabla u\|_{L^\infty(B_{1/16})}
	 \le C_2(n,M).
	\end{equation}
Further applications of interior Stokes estimates imply that
$u,p\in C^\infty(B_{1/16})$. Thus the singularity on the axis is removable
in $B_{1/16}$. This proves the working-scale estimate
\eqref{eq:B8-working-scale}. Theorem~\ref{thm:local-regularity} now follows
from the fixed scaling described in Remark~\ref{rem:B8-working-scale}.	
\end{proof}

	\section{Global rigidity}\label{sec:proof}
	\begin{proof}[Proof of Theorem~\ref{thm:main}]

	Let $x\in\R^n\setminus\{x'=0\}$ be arbitrary, and let
	$R>16|x|$.  Define the rescaled functions
	\[
	v(y)=Ru(Ry),\quad p_v(y)=R^2p(Ry),\quad
	y\in\R^n\setminus\{y'=0\}.
	\]
	One verifies directly that $(v,p_v)$ satisfies~\eqref{NS}
	in $\R^n\setminus\{y'=0\}$ with
	\[
	|v(y)|=R\,|u(Ry)|\leq\frac{C_1R}{|Ry'|}=\frac{C_1}{|y'|}.
	\]
	Thus $v$ satisfies the same hypotheses as $u$.
	By the working-scale estimate \eqref{eq:B8-working-scale},
\[
 |v(y)|\leq C_2(n,C_1),
 \qquad y\in B_{1/16}.
\]
	In particular, taking $y=x/R\in B_{1/16}$,
	\[
	|u(x)|=R^{-1}|v(x/R)|\leq \frac{C_2}{R}.
	\]
	Since $R$ may be arbitrarily large, letting $R\to\infty$ gives
	$u(x)=0$.  As $x$ was arbitrary, $u\equiv0$ in
	$\R^n\setminus\{x'=0\}$.
	\end{proof}


	\section{Applications of the Main theorem}\label{sec:applications}
	\subsection{Removable singularities along a line}
	Theorem~\ref{thm:local-regularity} immediately gives the following local
	removability result.

	\begin{theorem}\label{thm:removable}
		Let $n\geq5$, let $\Omega\subset\R^n$ be open, and let
		$\Sigma\subset\Omega\cap\{x'=0\}$ be relatively closed in $\Omega$.
		Suppose that $(u,p)$ is a smooth solution of~\eqref{NS} in
		$\Omega\setminus\Sigma$.  Assume that every point $x_0\in\Sigma$ has a
		neighbourhood $U\Subset\Omega$ and a constant $C_U$ such that
		\begin{equation}\label{eq:removablecond}
			|u(x)|\leq \frac{C_U}{\dist(x,\Sigma)},
			\quad x\in U\setminus\Sigma.
		\end{equation}
		Then $(u,p)$ extends across $\Sigma$ to a smooth solution of
		\eqref{NS} in $\Omega$.
	\end{theorem}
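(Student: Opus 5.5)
The plan is to reduce Theorem~\ref{thm:removable} to Theorem~\ref{thm:local-regularity} by a translation along the axis and a Navier--Stokes rescaling around each point of $\Sigma$. Fix $x_0=(0,a)\in\Sigma$ and take the neighbourhood $U$ and constant $C_U$ from \eqref{eq:removablecond}. Choose $r>0$ with $B_r(x_0)\Subset U$. There is an issue here: $(u,p)$ is only known to be smooth off $\Sigma$, not off the whole axis, whereas Theorem~\ref{thm:local-regularity} assumes smoothness on $B_1\setminus\{x'=0\}$.

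This is harmless, for two reasons. First, since $\Sigma\subset\{x'=0\}$, we have $\dist(x,\Sigma)\ge |x'-0|=|x'|$, but with the wrong direction for our purposes. The useful inequality goes the other way, and it holds only locally. For $x\in B_{r/2}(x_0)$ whose nearest axis point $(0,x_n)$ lies in $\Sigma$, we do get $\dist(x,\Sigma)=|x'|$. In general, though, $\Sigma\cap B_r(x_0)$ may have gaps.

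To handle this, I note that the proof of Theorem~\ref{thm:local-regularity} (via Lemma~\ref{lem:local-pressure}, Lemma~\ref{lem:weak-extension}, Sections~\ref{sec:Green}--\ref{sec:proof-main}) only uses the pointwise bound $|u|\le M/|x'|$ together with smoothness off the axis. Points of the axis not in $\Sigma$ are smooth points, so smoothness off the axis certainly holds. The needed bound $|u|\le M|x'|^{-1}$ near $x_0$ I would obtain as follows. Near axis points $y\notin\Sigma$, $u$ is smooth, hence bounded on compact sets away from $\Sigma$. Away from $\Sigma$ we also have $\dist(x,\Sigma)\ge c>0$.

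I would make this uniform by covering. Split $\overline{B_{r/2}(x_0)}$ into the set $K_\delta$ of points within $\delta$ of $\Sigma$ and its complement. On the complement, $u$ is continuous on a compact set avoiding $\Sigma$, so $|u|\le C_\delta\le C_\delta\cdot 1/|x'|$ (as $|x'|\le r$). On $K_\delta$ the honest issue is points $x$ with $|x'|\ll\dist(x,\Sigma)$, i.e.\ near a gap of $\Sigma$; these also have $\dist(x,\Sigma)\ge |x'|$, so $|u|\le C_U/\dist(x,\Sigma)\le C_U/|x'|$. So in every case $|u(x)|\le C_U/\dist(x,\Sigma)\le C_U/|x'|$ on $U\setminus\Sigma$, using only $\Sigma\subset$ axis. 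The bound extends to axis points off $\Sigma$ trivially. Thus in $B_r(x_0)\setminus\{x'=0\}$ we have $|u|\le C_U|x'|^{-1}$.

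With this bound, define $v(y)=r\,u(x_0+ry)$ and $q(y)=r^2p(x_0+ry)$ on $B_1\setminus\{y'=0\}$. Translation in $x_n$ preserves the axis and $|x'|$, and scaling preserves the class, so $|v(y)|\le C_U/|y'|$. Theorem~\ref{thm:local-regularity} then extends $(v,q)$ smoothly to $B_{1/128}$. Scaling back, $(u,p)$ extends smoothly to $B_{r/128}(x_0)$, and the extension coincides with the original on the dense set where both are defined, so the extensions are consistent. These balls cover $\Sigma$, giving a smooth solution on $\Omega$; the equations hold across $\Sigma$ by continuity, since $\Sigma$ has measure zero.

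The main point to check is the direction of the comparison $\dist(x,\Sigma)\ge|x'|$, which is exactly what converts hypothesis \eqref{eq:removablecond} into the transverse bound. The only other care needed is confirming that Theorem~\ref{thm:local-regularity} tolerates the axis points that are actually smooth, which it does because its hypothesis only requires smoothness off the axis.
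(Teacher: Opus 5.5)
Your proposal is correct and follows essentially the paper's argument: because $\Sigma\subset\{x'=0\}$ gives $\dist(x,\Sigma)\ge|x'|$, the hypothesis yields $|u|\le C_U|x'|^{-1}$ on a small ball around $x_0$ on which $(u,p)$ is smooth off the axis. Translation along the axis, rescaling, the local theorem, and agreement of the extensions on overlaps then finish the proof. Your opening remark calling $\dist(x,\Sigma)\ge|x'|$ ``the wrong direction'' and the subsequent $K_\delta$ covering are unnecessary; that inequality is exactly the right direction and is all the paper uses.
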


	\begin{proof}
		Fix $x_0\in\Sigma$ and translate the coordinates so that $x_0=0$.
		Choose $R>0$ so small that $B_{8R}\Subset U$.  Since
		$\Sigma\subset\{x'=0\}$, for every $x\in B_{8R}\setminus\{x'=0\}$,
		\[
		 \dist(x,\Sigma)\ge |x'|.
		\]
		Hence \eqref{eq:removablecond} implies the transverse estimate
		\[
		 |u(x)|\le \frac{C_U}{|x'|}
		 \quad\text{in }B_{8R}\setminus\{x'=0\}.
		\]
		(The solution is smooth there because
		$B_{8R}\setminus\{x'=0\}\subset\Omega\setminus\Sigma$.)
		Define
		\begin{equation}\label{eq:rescaled}
			\tilde u(y)=R\,u(Ry),\quad
			\tilde p(y)=R^2\,p(Ry),
			\quad y\in B_8\setminus\{y'=0\}.
		\end{equation}
		Then $(\tilde u,\tilde p)$ satisfies \eqref{NS} and
		\begin{equation}\label{eq:rescaledbound}
			|\tilde u(y)|\leq\frac{C_U}{|y'|},
			\quad y\in B_8\setminus\{y'=0\}.
		\end{equation}
		By Remark~\ref{rem:B8-working-scale}, $(\tilde u,\tilde p)$ extends
smoothly to $B_{1/16}$ and satisfies
		\begin{equation}\label{eq:localbounded}
			\|\tilde u\|_{L^\infty(B_{1/16})}
			+\|\nabla\tilde u\|_{L^\infty(B_{1/16})}\le C(n,C_U).
		\end{equation}
		Scaling back, $(u,p)$ extends smoothly to $B_{R/16}(x_0)$.
		The local extensions agree on overlaps: they agree with the original
		pair off the axis, and hence everywhere by continuity. Since
		$x_0\in\Sigma$ was arbitrary, the singularity on   $\Sigma$ is removable.
	\end{proof}

\subsection{Asymptotic behavior in exterior domains}
\label{sec:exterior}

We next apply the local estimate established above to stationary
Navier-Stokes flows in exterior domains.  The main point is that the
transverse critical bound can be upgraded to
the isotropic critical bound.

We use the optimal exterior decay theorem of
Bang-Gui-Liu-Wang-Xie~\cite{BangGuiLiuWangXie} and record it here for convenience. 
\begin{theorem}[Exterior asymptotics {\cite[Theorem 1.3]{BangGuiLiuWangXie}}]\label{thm:BGLWX_decay}
	For $n\geq 5$,    let $u$ be a smooth solution to the Navier-Stokes equations \eqref{NS} in $\mathbb{R}^n \setminus \overline{B}_1$. Assume $u$  satisfies
	\begin{equation}\label{corollary1-condition_2}
		|u(x)|\leq \frac{C}{|x|} \quad \text{for }\,\, x\in \mathbb{R}^n\setminus \overline{B}_1
	\end{equation}
	for some constant $C>0$.
	Then  as  $|x|\to \infty$, we have
	\begin{equation}\label{eq:BGLWX-expansion}
		u_i (x) =
		b_j G_{ij}(x)
		+O\left(\frac{1}{|x|^{n-1}}\right).
	\end{equation}
	In \eqref{eq:BGLWX-expansion},
	$G_{ij}$ is the Green tensor of the  Stokes equations given by
	\begin{equation}\label{eq:Greentensor}
		G_{ij}(x)= \frac{1}{2n\omega_n} \left[\frac{\delta_{ij}}{(n-2)|x|^{n-2}} +\frac{ x_i x_j}{|x|^n}\right],
	\end{equation}
	and $b=(b_1,b_2,\cdots,b_n)$ is defined by
	\begin{equation}\label{defb}
		b_i= \int_{\partial B_{R}} T_{ij}(u,p)\nu_j\,d\sigma,
		\quad i=1,2,\ldots,n, \quad R>1.
	\end{equation}
	Here $\omega_n$ is the volume of the unit ball in $\mathbb{R}^n$,
	\begin{equation}
		T_{ij}(u, p) = p\delta_{ij} + u_i u_j - \left( \frac{\partial u_i}{\partial x_j} + \frac{\partial u_j}{\partial x_i}\right)
	\end{equation}
	is the momentum flux density tensor of the fluid, and $\nu_j$ denotes the $j$-th component of the unit outer normal vector $\nu$ on $\partial B_{R}$.
	Here $\partial_jT_{ij}=0$ in the exterior region, so the flux is
	independent of $R$.
\end{theorem}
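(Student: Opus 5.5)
The plan is to represent $u$ through the Stokes Green tensor and to treat the convective term as a forcing. The difficulty is that the bound $|u|\le C/|x|$ is exactly critical: convolving $\nabla G$ with $|u|^2\lesssim|x|^{-2}$ only gives back $|x|^{-1}$, with no gain. So the first task is to produce a smallness (or decay) gain, and I would obtain it from the Liouville theorem, Theorem~\ref{thm:BGLWX}, via a blow-down argument.

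\emph{Step 1: scale-invariant bounds.} Apply the interior rescaling argument of Lemma~\ref{lem:local-pressure}, now with $r=|x_0|/8$ and balls centred at points of $\R^n\setminus \overline{B}_2$. This gives
\[
|\nabla^k u(x)|\le C_k|x|^{-1-k},\qquad |p(x)-p_\infty|\le C|x|^{-2}
\]
for $|x|\ge 2$. The limit $p_\infty$ exists because $|\nabla p|\lesssim |x|^{-3}$ is integrable along rays and across spheres. Normalize $p_\infty=0$.

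\emph{Step 2: $|x||u(x)|\to0$.} Suppose instead that $|x_m|\,|u(x_m)|\ge\varepsilon_0$ along some sequence $|x_m|\to\infty$. Set
\[
u_m(y)=R_mu(R_my),\qquad R_m=|x_m|,\qquad p_m(y)=R_m^2p(R_my).
\]
These satisfy $|u_m(y)|\le C/|y|$ on $|y|>1/R_m$, and by Step~1 they have uniform $C^k$ bounds on compact subsets of $\R^n\setminus\{0\}$. By Arzel\`a--Ascoli a subsequence converges smoothly on compacta of $\R^n\setminus\{0\}$. The limit is a smooth solution there with $|u_\infty|\le C/|x|$, and Theorem~\ref{thm:BGLWX} gives $u_\infty\equiv0$. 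This contradicts $|u_m(x_m/R_m)|\ge\varepsilon_0$, since $|x_m/R_m|=1$. Hence $|u(x)|\le \varepsilon(R)/|x|$ for $|x|\ge R$, with $\varepsilon(R)\to0$. I expect this blow-down step to be the conceptual heart of the proof; the remaining steps are perturbative.

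\emph{Step 3: weighted bootstrap.} Fix a cutoff $\zeta$ equal to $1$ outside $B_{2R}$ and vanishing on $B_R$. Then $(\zeta u,\zeta p)$ solves in $\R^n$
\[
-\Delta(\zeta u)+\nabla(\zeta p)=-\divg(\zeta\,u\otimes u)+g,\qquad \divg(\zeta u)=u\cdot\nabla\zeta,
\]
where $g$ and $u\cdot\nabla\zeta$ are smooth and supported in $B_{2R}$. The representation formula gives
\[
\zeta u_i=G_{ij}*g_j+\partial_kG_{ij}*(\zeta u_ku_j)+(\text{compactly supported divergence correction}).
\]
The compactly supported pieces are $O(|x|^{2-n})$. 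Consider the weighted norm $\sup_x |x|^{a}|u(x)|$ with $1\le a\le n-2$. Because $|u|\le\varepsilon/|x|$ outside $B_R$, the nonlinear term obeys
\[
|\nabla G|*\bigl(|x|^{-1-a}\varepsilon\bigr)\lesssim \varepsilon\,|x|^{-a}\quad\text{for } a<n-2,
\]
with a logarithm at the endpoint $a=n-2$. Taking $R$ large, so that $\varepsilon$ is small, closes a contraction-type estimate. Increasing $a$ step by step yields $|u|\lesssim |x|^{2-n}$, and hence $|u\otimes u|\lesssim|x|^{4-2n}$. An a priori finiteness issue arises in the weighted norm at each step; I would handle it by working on truncated suprema over $B_\rho$ and letting $\rho\to\infty$.

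\emph{Step 4: expansion.} For $n\ge5$ we have $2n-4>n$, so $|y|\,|u\otimes u|(y)$ is integrable. Write
\[
\partial_kG_{ij}*f(x)=\partial_kG_{ij}(x)\!\int f+\int\bigl[\partial_kG_{ij}(x-y)-\partial_kG_{ij}(x)\bigr]f(y)\,dy,
\]
where $\partial_kG_{ij}(x)=O(|x|^{1-n})$. Split the second integral into $|y|<|x|/2$, where a Taylor bound applies, and the complementary region, where the pointwise decay of $f$ suffices. Both parts are $O(|x|^{1-n})$. In the same way,
\[
G*g=G(x)\!\int g+O(|x|^{1-n}),
\]
and the divergence correction contributes $O(|x|^{1-n})$. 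Collecting terms gives
\[
u_i=b_jG_{ij}(x)+O(|x|^{1-n}).
\]
The coefficient $b$ is $\int g$ plus the divergence correction. Integrating $\partial_jT_{ij}=0$ (with the cutoff) by parts identifies it with the flux \eqref{defb}, which is independent of $R$ because $T$ is divergence free in the exterior region.
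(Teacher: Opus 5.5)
The paper gives no proof of this statement. It quotes it from Bang--Gui--Liu--Wang--Xie and uses it as a black box in Theorem~\ref{thm:exterior-optimal-decay}, so your argument has to stand on its own.

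\textbf{What works.} The architecture is sound: scale-invariant bounds, then a blow-down plus Theorem~\ref{thm:BGLWX} to get $\varepsilon(R):=\sup_{|x|\ge R}|x|\,|u(x)|\to0$, then perturbative decay, then the expansion. Steps~1, 2 and~4 are essentially correct. The identification of $b$ also works:
\[
\int g_i\,dx=\int\partial_k\zeta\,(p\delta_{ik}+u_iu_k-\partial_ku_i)\,dx=\int_{\partial B_\rho}(p\delta_{ik}+u_iu_k-\partial_ku_i)\nu_k\,d\sigma .
\]
The missing symmetric part vanishes because $\int\partial_k\zeta\,\partial_iu_k\,dx=\int\partial_i\zeta\,\operatorname{div}u\,dx=0$.

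\textbf{The gap.} It is in Step~3, exactly at the point you flag. For $a>1$ you do not know a priori that $\sup|x|^a|u|<\infty$, and restricting the supremum to $R\le|x|\le\rho$ does not close. The convolution at a point with $|x|=\rho$ still sees $|y|>\rho$, where only $|u(y)|\le\varepsilon/|y|$ is available. The region $|y-x|<\rho/2$ alone contributes a term whose only available bound is
\[
 \varepsilon^2\int_{|y-x|<\rho/2}|x-y|^{1-n}\rho^{-2}\,dy\simeq\varepsilon^2\rho^{-1}.
\]
After multiplying by $|x|^a$ this is $\varepsilon^2\rho^{a-1}$, which is unbounded as $\rho\to\infty$. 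Cutting $u$ off at scale $\rho$ instead produces commutator terms of the same critical size. Since $\varepsilon(R)\to0$ comes with no rate, naive iteration never reaches an exponent $a>1$: the bound $|u|^2\le\varepsilon|x|^{-1}|u|$ only reproduces $|x|^{-a}$. So this absorption is the heart of Step~3 and needs a different justification.

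\textbf{A repair.} Use positivity of the kernel instead of a weighted norm. Set $v=\zeta u$ and
\[
Kf(x):=\int_{|y|\ge R}|x-y|^{1-n}|y|^{-1}f(y)\,dy .
\]
Your representation gives pointwise $|v|\le|F|+C\varepsilon(R)\,K|v|$, with $|F|\le C_R|x|^{2-n}$ on $\{|x|\ge R\}$. The Riesz bound $K(|y|^{-b})\le C_b|x|^{-b}$ holds for $0<b<n-1$. Iterating $m$ times gives
\[
 |v|\le C_R|x|^{2-n}\sum_{j\ge0}\bigl(CC_{n-2}\varepsilon\bigr)^j+\bigl(CC_1\varepsilon\bigr)^m\frac{\varepsilon}{|x|}.
\]
For $R$ large, letting $m\to\infty$ yields $|u|\lesssim|x|^{2-n}$ in one step, with no finiteness assumption. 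Alternatively, use the capped weight $\min\{|x|^{n-2},\rho^{n-3}|x|\}$. It is finite a priori because $|u|\le C/|x|$, and by scaling the convolution bound for it is uniform in $\rho$.

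\textbf{Smaller corrections.}
\begin{enumerate}
\item The logarithm in the Riesz bound occurs at $a=n-1$, not at $a=n-2$.
\item In Step~4, $|y|\,|u\otimes u|\lesssim|y|^{5-2n}$ is \emph{not} integrable when $n=5$. You do not need it: $f=\zeta u\otimes u\in L^1$ with $|f|\lesssim|y|^{4-2n}$ already gives $\partial_kG_{ij}*f=O(|x|^{1-n})$ directly.
\item The divergence correction (the gradient of the Newtonian potential of $u\cdot\nabla\zeta$) is $O(|x|^{1-n})$ and contributes nothing to $b$, which is simply $\int g$.
\end{enumerate}
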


\begin{theorem}[Isotropic decay and asymptotics at infinity]
	\label{thm:exterior-optimal-decay}
	Let $n\geq5$, and let $\Omega\subset\mathbb R^n$ be an exterior domain;
	that is, $\mathbb R^n\setminus\Omega$ is compact.  Suppose that $(u,p)$ is
	a smooth solution of~\eqref{NS} in $\Omega\setminus \{x'=0\}$.  Assume that there exist
	$R_0>0$ and $C_1>0$ such that
	\begin{equation}\label{eq:exterior-transverse}
		|u(x)|\leq\frac{C_1}{|x'|}
		\quad
		\text{for }x\in\Omega,\quad |x|\geq R_0,\quad x'\neq0.
	\end{equation}
	Then the singularity on the axis is removable outside a sufficiently large ball.
Moreover, there exist $R_1\geq R_0$ and
	$C=C(n,C_1)>0$ such that
	\begin{equation}\label{eq:exterior-isotropic}
		|u(x)|\leq\frac{C}{|x|},
		\quad |x|\geq R_1.
	\end{equation}
	Consequently, the expansion~\eqref{eq:BGLWX-expansion} holds as
	$|x|\to\infty$.  In particular,
	\[
	|u(x)|=O(|x|^{2-n}).
	\]
	If $b=0$, then
	\[
	|u(x)|=O(|x|^{1-n}).
	\]
\end{theorem}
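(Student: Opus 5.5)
The plan is to apply the working-scale estimate of Remark~\ref{rem:B8-working-scale} on balls centered at points of the axis far from the origin, and on balls centered off the axis. This upgrades the transverse bound to an isotropic one; Theorem~\ref{thm:BGLWX_decay} then does the rest.

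\textbf{Choice of scale.} Fix $R_1\ge R_0$ large, say $R_1\ge 4R_0$, with $\mathbb R^n\setminus\Omega\subset B_{R_1/4}$. Take a point $x$ with $|x|\ge R_1$ and $x'\neq0$, and set $R=|x|/32$. The ball $B_{8R}(x)$ has radius $|x|/4$, so it lies in $\{|y|\ge 3|x|/4\}\subset\Omega\cap\{|y|\ge R_0\}$.

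\textbf{Case $|x'|<R/16$.} Center at the axis point $z=(0,x_n)$. Since $|z-x|=|x'|<R/16$ and $|z|\ge|x|-|x'|$, the ball $B_{8R}(z)$ still lies in $\{|y|\ge R_0\}\cap\Omega$. On it $|u(y)|\le C_1/|y'|$, and the transverse distance is translation invariant along $x_n$. Rescale by $\tilde u(w)=Ru(z+Rw)$ and $\tilde p(w)=R^2p(z+Rw)$. Then $|\tilde u(w)|\le C_1/|w'|$ on $B_8\setminus\{w'=0\}$. The working-scale estimate \eqref{eq:B8-working-scale} gives a smooth extension to $B_{1/16}$ with $|\tilde u|\le C(n,C_1)$. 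Scaling back yields $|u|\le C/R = 32C/|x|$ on $B_{R/16}(z)$, which contains $x$. It also yields smoothness across the axis segment $\{y'=0,\ |y_n-x_n|<R/16\}$.

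\textbf{Case $|x'|\ge R/16$.} Here directly $|u(x)|\le C_1/|x'|\le 16C_1/R = 512\,C_1/|x|$.

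The union of the axis neighbourhoods covers the axis outside $B_{R_1}$. The local extensions agree off the axis, hence agree on overlaps, so the singularity is removable in $\{|x|\ge R_1\}$ and \eqref{eq:exterior-isotropic} holds with $C=C(n,C_1)$.

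\textbf{Applying the exterior theorem.} To invoke Theorem~\ref{thm:BGLWX_decay}, rescale by $R_1$: set $v(y)=R_1u(R_1y)$. Then $v$ is smooth in $\mathbb R^n\setminus\overline B_1$, since the axis singularity is removed there. It satisfies $|v(y)|\le C/|y|$ there, because $|u(x)|\le C/|x|$ for $|x|\ge R_1$ is scale-invariant. The theorem gives the expansion for $v$, and scaling back gives \eqref{eq:BGLWX-expansion} for $u$. The flux $b$ is computed on any sphere $\partial B_R$ with $R>R_1$, and it is independent of $R$ because $\partial_jT_{ij}=0$ in the now smooth exterior region. Since $|G_{ij}(x)|\lesssim|x|^{2-n}$, we get $|u|=O(|x|^{2-n})$, and if $b=0$ only the remainder $O(|x|^{1-n})$ survives.

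\textbf{Main obstacle.} The difficult step is the geometric bookkeeping in the near-axis case. The ball must be centered on the axis, yet it has to fit inside both $\Omega$ and the region $|y|\ge R_0$ where \eqref{eq:exterior-transverse} holds. It must also have radius comparable to $|x|$, so that the bound $C/R$ becomes $C/|x|$. Choosing $R$ a small fixed fraction of $|x|$, as above, handles all three requirements at once. The other subtlety is that the constant must be uniform. This holds because \eqref{eq:B8-working-scale} depends only on $n$ and the transverse constant, and the transverse constant is preserved under both translation along $x_n$ and Navier--Stokes scaling.
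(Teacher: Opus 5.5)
Your proposal is correct and follows essentially the same route as the paper. Like the paper, you split into near-axis and away-from-axis points and apply the working-scale estimate of Remark~\ref{rem:B8-working-scale} on axis-centred balls of radius comparable to $|x|$; the paper instead rescales the sphere $|x|=R$ to $\partial B_1$ and uses fixed balls $B_{8\rho_0}(\pm e_n)$ at the poles, and both then invoke Theorem~\ref{thm:BGLWX_decay} after rescaling $\{|x|>R_1\}$ to $\mathbb R^n\setminus\overline B_1$. The only cosmetic difference is that the paper proves removability separately with arbitrarily small balls around each axis point with $|x_0|>R_0$, whereas you obtain it from the same large balls that give the decay.
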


\begin{proof}
	Since $\mathbb R^n\setminus\Omega$ is compact, after increasing $R_0$ if
	necessary, we may assume that
	\begin{equation}\label{eq:exterior-contains-ball-complement}
		\mathbb R^n\setminus \overline{B}_{R_0}\subset\Omega.
	\end{equation}
	We first show the singularity on  the axis is removable in this exterior region.  Fix a point
	$x_0\in\{x'=0\}$ with $|x_0|>R_0$.  Choose $r=r(x_0)>0$ so small
	that $B_{8r}(x_0)\Subset\mathbb R^n\setminus\overline{B}_{R_0}$.
	After translating by $x_0$ and rescaling by $r$, the transverse estimate
\eqref{eq:exterior-transverse} is exactly of the working-scale form in
Remark~\ref{rem:B8-working-scale}.  Hence $(u,p)$ extends smoothly
	across the axis in a neighborhood of $x_0$.  Since $x_0$ was arbitrary,
	the whole axis portion in $\mathbb R^n\setminus\overline{B}_{R_0}$ is
	removable, and $(u,p)$ is a smooth solution throughout that exterior
	region.

	Fix $R\geq 4R_0$ and define
	\begin{equation}\label{eq:exterior-rescaling}
		u_R(y)=R u(Ry),
		\quad
		p_R(y)=R^2p(Ry).
	\end{equation}
	Then $(u_R,p_R)$ solves~\eqref{NS} in $\mathbb{R}^n \setminus \overline{B}_{R_0/R}$.  Let \(
	A:=B_2\setminus\overline{B_{1/2}}.
	\) In view of
	\eqref{eq:exterior-contains-ball-complement}, $A\Subset \mathbb{R}^n \setminus \overline{B}_{R_0/R}$.
The transverse bound is invariant
	under the scaling. Hence,
	\begin{equation}\label{eq:scaled-transverse}
		|u_R(y)|
		=R|u(Ry)|
		\leq\frac{C_1R}{|(Ry)'|}
		=\frac{C_1}{|y'|},
		\quad y\in A,\quad y'\neq0.
	\end{equation}
	We claim that
	\begin{equation}\label{eq:uniform-sphere-bound}
		\|u_R\|_{L^\infty(\partial B_1)}\leq C(n,C_1)
	\end{equation}
	with a constant independent of $R\geq4R_0$.

	Let
	\[
	e_n=(0,\ldots,0,1),
	\quad -e_n=(0,\ldots,0,-1)
	\]
	be the two points at which the axis intersects with $\partial B_1$. Choose a fixed $\rho_0>0$ sufficiently small so that
\(
B_{8\rho_0}(e_n)\cup B_{8\rho_0}(-e_n)
\Subset B_2\setminus\overline{B_{1/2}}.
\)  If $y\in\partial B_1$ lies outside
	$B_{\rho_0/16}(e_n)\cup B_{\rho_0/16}(-e_n)$, then
	$|y'|\geq c\rho_0$.
	Consequently,~\eqref{eq:scaled-transverse} gives
	\begin{equation}\label{eq:away-poles}
		|u_R(y)|\leq \frac{C_1}{c\rho_0}
	\end{equation}
	uniformly in $R$.

	It remains to control $u_R$ near $e_n$ and $-e_n$.  The balls
	$B_{8\rho_0}(\pm e_n)$ are compactly contained in $A$.  After translating by
	either $e_n$ or $-e_n$ and rescaling by the fixed factor $\rho_0$, the pair $(u_R,p_R)$ satisfies the hypotheses of the working-scale estimate in
Remark~\ref{rem:B8-working-scale}. Therefore
	\begin{equation}\label{eq:pole-bounds}
		\|u_R\|_{L^\infty(B_{\rho_0/16}(e_n))}
		+\|u_R\|_{L^\infty(B_{\rho_0/16}(-e_n))}
		\leq C(n,C_1).
	\end{equation}
	The constant is independent of $R$, because both the equations and the
	bound~\eqref{eq:scaled-transverse} are invariant under the
	Navier-Stokes scaling, while $\rho_0$ is fixed.

	Combining~\eqref{eq:away-poles} and~\eqref{eq:pole-bounds} proves
	\eqref{eq:uniform-sphere-bound}.  Hence, for every $x$ with
	$|x|=R\geq4R_0$, taking $y=x/R\in\partial B_1$ gives
	\[
	R|u(x)|=|u_R(y)|\leq C(n,C_1).
	\]
	Since $R$ is arbitrary, we obtain
	\[
	|u(x)|\leq\frac{C(n,C_1)}{|x|}
	\quad\text{for all }|x|\geq4R_0.
	\]
	Thus~\eqref{eq:exterior-isotropic} holds, for instance with
	$R_1=4R_0$.

	Finally, restrict $(u,p)$ to
	$\mathbb R^n\setminus\overline{B}_{R_1}$ and rescale this region to
	$\mathbb R^n\setminus\overline{B}_1$.  Theorem~\ref{thm:BGLWX_decay} applies and
	yields~\eqref{eq:BGLWX-expansion}.  Since
	$G_{ij}(x)=O(|x|^{2-n})$, the expansion implies
	$|u(x)|=O(|x|^{2-n})$; when $b=0$, its remainder gives
	$|u(x)|=O(|x|^{1-n})$.
\end{proof}

	\appendix

	\section{Proof of Proposition  \ref{prop:FR-interior-weighted} }
	\label{sec:appendix}
We first recall the fixed-scale weighted estimate that will be used
	below. This estimate is the local analogue of the weighted estimate
	in \cite{FrehseRuzicka94}. Since the hypotheses in the present
	problem are slightly different from those in that reference, we use
	the following local form.

	\begin{lemma}[Fixed-scale weighted estimate]
		\label{lem:FR-fixed-scale-H}
		Under the assumptions of
		Proposition~\ref{prop:FR-interior-weighted}, there exists a constant
		\(C\), depending only on \(n\), \(R_*\), $q$, and the local norms
		\[
		\|u\|_{W^{1,2}(B_{4R_*}(z))},\quad
		\|p\|_{W^{1,\frac n{n-1}}(B_{4R_*}(z))},\quad
		\|H_+\|_{L^q(B_{4R_*}(z))},
		\]
		such that, with $\mathcal B:=B_{3R_*}(z)$,
		\begin{equation}
			\sup_{x_0\in \mathcal B}
			\int_{\mathcal B}
			\frac{|H(x)|}{|x-x_0|^{n-2}}\,dx
			\leq C.
			\label{eq:FR-fixed-scale-H}
		\end{equation}
	\end{lemma}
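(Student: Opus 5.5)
The estimate will follow from splitting $|H|=2H_+-H$ and bounding the two pieces separately, following Frehse--R\r{u}\v{z}i\v{c}ka. Fix $x_0\in\mathcal B$ and write
\[
\int_{\mathcal B}\frac{|H|}{|x-x_0|^{n-2}}\,dx
=2\int_{\mathcal B}\frac{H_+}{|x-x_0|^{n-2}}\,dx-\int_{\mathcal B}\frac{H}{|x-x_0|^{n-2}}\,dx .
\]
\textbf{The positive part.} Since $q>n/2$, the weight $|x-x_0|^{2-n}$ lies in $L^{q'}(B_{4R_*}(z))$ uniformly in $x_0$, because $(n-2)q'<n$. H\"older's inequality therefore bounds this term by $C\|H_+\|_{L^q}$.

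\textbf{The signed term.} The key step is a \emph{lower} bound
\[
\int\frac{H\,\eta}{|x-x_0|^{n-2}}\,dx\ \ge\ -C ,
\]
where $\eta\in C_c^\infty(B_{4R_*}(z))$ is a cutoff equal to $1$ on $B_{7R_*/2}(z)$. The weight is comparable to $1$ on $B_{4R_*}\setminus\mathcal B$ where $\eta\neq1$, so that part is controlled by $\|H\|_{L^1}$.

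To get the lower bound I will test the weak momentum equation with $\varphi=u\,\eta\,\Gamma_\varepsilon$, where $\Gamma_\varepsilon=(|x-x_0|^2+\varepsilon^2)^{(2-n)/2}$ is a regularized fundamental-solution weight. Equivalently, I will use the local energy identity/inequality \eqref{eq:FR-local-NS-inequality} with the nonnegative test function $\eta\Gamma_\varepsilon$. This gives
\[
\int|\nabla u|^2\eta\Gamma_\varepsilon+\tfrac12\int|u|^2(-\Delta)(\eta\Gamma_\varepsilon)\le\int H\,u\cdot\nabla(\eta\Gamma_\varepsilon).
\]
The useful sign comes from $-\Delta\Gamma_\varepsilon\ge0$, which is a multiple of an approximate delta.

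The pressure enters through the identity $\nabla H=u\times\omega$-type structure, i.e. $\nabla p=\Delta u-(u\cdot\nabla)u$. A cleaner route is to test the equation for $p$ itself: from the weak form, $-\Delta p=\partial_i\partial_j(u_iu_j)$. Integrating against $\eta\Gamma_\varepsilon$ expresses
\[
\int p\,(-\Delta)(\eta\Gamma_\varepsilon)=\int u_iu_j\,\partial_i\partial_j(\eta\Gamma_\varepsilon).
\]
Now $-\Delta\Gamma_\varepsilon=c_n\varepsilon^2(|x-x_0|^2+\varepsilon^2)^{-(n+2)/2}$ is a positive approximate identity. Combined with the Hessian bound $|\partial^2\Gamma_\varepsilon|\le C|x-x_0|^{-n}$, this yields a mean-value/Riesz-potential representation of $p$ near $x_0$, with a quadratic term in $u$ and a harmonic remainder bounded by $\|p\|_{L^1}$. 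Adding $\frac12|u|^2$, the singular quadratic term $\int u_iu_j\,\partial_i\partial_j\Gamma$ is dominated in sign by $\int|u|^2\Delta\Gamma$-type terms only up to the Hessian structure. This is exactly where Frehse--R\r{u}\v{z}i\v{c}ka use the inequality
\[
\partial_i\partial_j\Gamma\,\xi_i\xi_j\ge -(n-1)\,|x-x_0|^{-n}|\xi|^2\cdot c .
\]

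Since this direct route looks delicate, my actual plan is the following.
\begin{itemize}
\item[(i)] Use the $H$-inequality \eqref{eq:head-weak}-type formulation. In the appendix setting it is available from the local NS inequality plus the pressure equation. Test it with $\eta\,G$, where $G$ is an approximate Green function of $-\Delta-u\cdot\nabla$ with pole $x_0$, built exactly as in Section~\ref{sec:Green}.
\item[(ii)] Compare $G$ with $|x-x_0|^{2-n}$ from below and above. Here the expected uniform $W^{1,q}$ and annular bounds (P3) hold with constants depending only on $\|u\|_{W^{1,2}}$ and the Sobolev data.
\end{itemize}

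\textbf{Main obstacle.} The hardest step is obtaining a \emph{two-sided} pointwise comparison $G\simeq|x-x_0|^{2-n}$ for the drift operator with only $u\in W^{1,2}$, $n\ge5$. In general this fails, so I would instead fall back on the Frehse--R\r{u}\v{z}i\v{c}ka trick. Test the local energy inequality with $\eta\Gamma_\varepsilon$ and use $H u\cdot\nabla\Gamma_\varepsilon$ together with $\operatorname{div}(Hu)=u\cdot\nabla H$. Then rewrite $\int Hu\cdot\nabla\Gamma$ via the momentum equation as $\int(\Delta u-\nabla p)\cdot\ldots$ so that $\int p\,\Delta\Gamma$ appears with a favourable sign.

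This produces $\int|H|\,|x-x_0|^{2-n}\le C(1+\int H_+|x-x_0|^{2-n})$ after letting $\varepsilon\to0$ via Fatou. Uniformity in $x_0\in\mathcal B$ comes from all constants depending only on the listed norms.
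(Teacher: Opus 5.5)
\textbf{There is a genuine gap: the lower bound for the signed integral of $H$ is never established.} Your outer reduction matches the paper's. You write $|H|=2H_+-H$, control the $H_+$ piece by H\"older since $(n-2)q'<n$, and reduce to a \emph{lower} bound for the signed weighted integral of $H$. But that lower bound is the entire content of the lemma, and none of your mechanisms produces it.
\begin{itemize}
\item Testing the pressure equation with $\Gamma_\varepsilon\sim|x-x_0|^{2-n}$ uses a weight two powers of $r$ too singular. $-\Delta\Gamma_\varepsilon$ is an approximate delta, so you only recover a mollified value of $p$ at $x_0$. Meanwhile $u_iu_j\partial_{ij}\Gamma_\varepsilon\sim|u|^2r^{-n}$ is not integrable and has no sign.
\item Testing the local energy inequality with $\eta\Gamma_\varepsilon$ leaves $\int Hu\cdot\nabla\Gamma_\varepsilon$, with $|\nabla\Gamma_\varepsilon|\sim r^{1-n}$, uncontrolled. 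It also aims at $\int|\nabla u|^2r^{2-n}\le C$, which is far beyond anything available at this stage. In the paper even the milder term $\int Hu\cdot\nabla(\eta^2w_h)$ is handled only through the potential $g$ with $-\Delta g=H$, and the $L^\infty$ bound on $g$ is deduced \emph{from} this lemma. So that route is circular.
\item Route (i) tests $-\Delta H+u\cdot\nabla H\le0$ against a nonnegative function. That can only bound $H$ from \emph{above}; this is exactly how Section~\ref{sec:Hbound} obtains $H_+\in L^\infty$. That inequality is also not among the hypotheses of Proposition~\ref{prop:FR-interior-weighted}.
\item Route (ii) you concede fails, and the final fallback asserts the needed inequality without deriving it.
\end{itemize}

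\textbf{The missing idea.} Use the weight $w_h=(r^2+h^2)^{-(n-4)/2}$, $r=|x-x_0|$; this is where $n\ge5$ enters. Insert it into the weak momentum equation tested with the gradient field $\nabla\varphi_h$, $\varphi_h=\eta^2w_h$:
\[
\int p\,\Delta\varphi_h\,dx+\int u_iu_j\,\partial_{ij}\varphi_h\,dx=0 .
\]
Set $A_h=(r^2+h^2)^{-(n-2)/2}$ and $B_h=(r^2+h^2)^{-n/2}$. Then $\Delta w_h=-2(n-4)A_h-(n-4)(n-2)h^2B_h$ and $u_iu_j\partial_{ij}w_h=-(n-4)|u|^2A_h+(n-4)(n-2)|u\cdot(x-x_0)|^2B_h$. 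So the pressure and the trace part of the quadratic term combine exactly into $H=\frac12|u|^2+p$:
\[
2(n-4)\int H\eta^2A_h\,dx+(n-4)(n-2)\int H\eta^2h^2B_h\,dx
=(n-4)(n-2)\int\Bigl(|u\cdot(x-x_0)|^2+\tfrac12h^2|u|^2\Bigr)\eta^2B_h\,dx+\mathcal R_h .
\]
Here $\mathcal R_h$ collects the terms containing derivatives of $\eta$. It satisfies $|\mathcal R_h|\le C(R_*)\bigl(\|p\|_{L^1}+\|u\|_{L^2}^2\bigr)$ because $\dist(x_0,\supp\nabla\eta)\ge R_*/2$.

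The integral on the right is nonnegative, which is precisely the lower bound you were missing. Now substitute $H=2H_+-|H|$, apply H\"older to the $H_+$ terms, and let $h\downarrow0$ by Fatou; this gives the lemma. Note that only the weak equation with a gradient test field is used, not the energy or head-pressure inequalities.

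\textbf{A smaller point.} This argument controls $|H|$ on all of $\{\eta=1\}\supset\mathcal B$ at once. Your passage from the $\eta$-weighted integral back to $\mathcal B$ overlooks $\{\eta=1\}\setminus\mathcal B$. There the weight is not bounded, since $x_0$ may lie near $\partial\mathcal B$. This is easily repaired using $H\le H_+$ on that set.
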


	\begin{proof}
		Fix \(x_0\in\mathcal B=B_{3R_*}(z)\). Choose
		$$
		\eta\in C_c^\infty(B_{4R_*}(z)),
		\quad
		0\leq\eta\leq1,
		\quad
		\eta\equiv1\quad\text{on }B_{\frac72R_*}(z),
		$$
		with
		$$
		|\nabla\eta|\leq CR_*^{-1},
		\quad
		|D^2\eta|\leq CR_*^{-2}.
		$$

		For \(0<h<R_*\), let
		$$
		r:=|x-x_0|,
		\quad
		w_h(x):=(r^2+h^2)^{-(n-4)/2},
		\quad
		\varphi_h:=\eta^2w_h.
		$$
		We use \(\nabla\varphi_h\) as a test function in the 
		Navier-Stokes equations. Since \(\operatorname{div}u=0\),
			\begin{equation}\label{eq:fixed-H-test}
			\int_{B_{4R_*}(z)}
			p\Delta\varphi_h\,dx
			+
			\int_{B_{4R_*}(z)}
				u_i u_j\partial_{ij}\varphi_h\,dx
			=0.
		\end{equation}
		A direct computation gives
		$$
		\partial_{ij}w_h
		=
		-(n-4)\delta_{ij}(r^2+h^2)^{-(n-2)/2}
		+
		(n-4)(n-2)
		\frac{(x_i-x_{0,i})(x_j-x_{0,j})}
		{(r^2+h^2)^{n/2}},
		$$
		and
		$$
		\Delta w_h
		=
		-2(n-4)(r^2+h^2)^{-(n-2)/2}
		-(n-4)(n-2)
		\frac{h^2}{(r^2+h^2)^{n/2}}.
		$$
		Set
		\[
		 A_h=(r^2+h^2)^{-(n-2)/2},\quad
		 B_h=(r^2+h^2)^{-n/2}.
		\]
		The terms in~\eqref{eq:fixed-H-test} in which both derivatives fall
		on $w_h$ are
		\begin{align*}
		 &-2(n-4)\int_{B_{4R_*}(z)} H\eta^2A_h\, dx
		 +(n-4)(n-2)\int_{B_{4R_*}(z)} |u\cdot(x-x_0)|^2\eta^2B_h\, dx \\
		 &\hspace{35mm}-(n-4)(n-2)\int_{B_{4R_*}(z)} p\eta^2h^2B_h\, dx.
		\end{align*}
		 Using
		\[
		 |H|=2H_+-H,\quad p=H-\frac12|u|^2,
		\]
		we rewrite the preceding expression as
		\begin{align*}
		 &2(n-4)\int_{B_{4R_*}(z)} |H|\eta^2A_h\, dx
		 +(n-4)(n-2)\int_{B_{4R_*}(z)} |u\cdot(x-x_0)|^2\eta^2B_h\, dx\\
		 &\quad +(n-4)(n-2)\int_{B_{4R_*}(z)} |H|\eta^2h^2B_h \, dx
		 +\frac{(n-4)(n-2)}2\int_{B_{4R_*}(z)} |u|^2\eta^2h^2B_h\, dx\\
		 &\quad -4(n-4)\int_{B_{4R_*}(z)} H_+\eta^2A_h \, dx
		 -2(n-4)(n-2)\int_{B_{4R_*}(z)} H_+\eta^2h^2B_h\, dx.
		\end{align*}
		The remaining terms in \eqref{eq:fixed-H-test} contain  derivatives of $\eta$.  Moving them and
		the terms containing two $H_+$  to the right, and discarding the two nonnegative
		terms containing $h^2$, gives
		\begin{align}
			&c_n\int_{B_{4R_*}(z)}
			\frac{|H|\eta^2}{(r^2+h^2)^{(n-2)/2}}\,dx
			+
			c_n\int_{B_{4R_*}(z)}
			\frac{|u\cdot(x-x_0)|^2\eta^2}
			{(r^2+h^2)^{n/2}}\,dx
			\notag\\
			&\quad\leq
			C\int_{\operatorname{supp}\nabla\eta}
			\frac{|p|+|u|^2}
			{(r^2+h^2)^{(n-2)/2}}\,dx
			+
			C\int_{B_{4R_*}(z)}
			\frac{H_+\eta^2}
			{(r^2+h^2)^{(n-2)/2}}\,dx.
						\label{eq:FR-fixed-scale-identity}
		\end{align}

		Since
		\[
		\operatorname{dist}
		\bigl(x_0,\operatorname{supp}\nabla\eta\bigr)
		\geq \frac{R_*}{2},
		\]
		the terms involving \(\nabla\eta\) or \(D^2\eta\) are bounded by
		\[
		C(R_*)\left(
		\|p\|_{L^1(B_{4R_*}(z))}
		+
		\|u\|_{L^2(B_{4R_*}(z))}^2
		\right),
		\]
		uniformly in \(x_0\in B_{3R_*}(z)\) and \(0<h<R_*\).
Moreover, \(H_+\in L^q(B_{4R_*}(z))\) for some \(q>n/2\),
		H\"older's inequality gives
		\begin{align}
			\int_{B_{4R_*}(z)}
			\frac{H_+}
			{(r^2+h^2)^{(n-2)/2}}\,dx
			&\leq
			\|H_+\|_{L^q(B_{4R_*}(z))}
			\left\|
			(r^2+h^2)^{-(n-2)/2}
			\right\|_{L^{q'}(B_{4R_*}(z))}
			\leq C,
			\label{eq:FR-fixed-scale-Hplus}
		\end{align}
		where
		\(
		q'=\frac{q}{q-1}<\frac{n}{n-2}.
		\)
		Thus the right-hand side of
		\eqref{eq:FR-fixed-scale-identity} is bounded independently of
		\(h\) and \(x_0\). Hence
		\[
		\int_{B_{4R_*}(z)}
		\frac{|H|\eta^2}
		{(r^2+h^2)^{(n-2)/2}}\,dx
		\leq C.
		\]
		Since \(\eta\equiv1\) on \(\mathcal B=B_{3R_*}(z)\),
		Fatou's lemma yields
		\[
		\int_{\mathcal B}
		\frac{|H(x)|}{|x-x_0|^{n-2}}\,dx
		\leq C.
		\]
		Taking the supremum over
		\(x_0\in \mathcal B\) proves
		\eqref{eq:FR-fixed-scale-H}.  
	\end{proof}

We now prove Proposition~\ref{prop:FR-interior-weighted}.

\begin{proof}[Proof of Proposition \ref{prop:FR-interior-weighted}]
The proof is divided into six steps. All integrations involving
$g$ below are first performed at fixed $h>0$. Since
$g\in W^{2,2}\cap L^\infty$, $u\in W^{1,2}$ and $H\in L^2$,
the functions $u_i\partial_i(\eta^2w_h)$ and
$(g-g_R)\eta^2v_h$ belong to $W^{1,2}_0$ on their support.
They are admissible test functions for the Poisson equation by density. Identities
involving $g^2$ follow by approximation in $W^{1,1}$. Thus no
smoothness across the axis is used in these weighted calculations.

{\it Step 1. Construction of the auxiliary function.}

	Set
	\[
	\mathcal B:=B_{3R_*}(z),
	\quad
	H:=\frac12|u|^2+p.
	\]
	By the assumptions of
	Proposition~\ref{prop:FR-interior-weighted},
	\[
	u\in W^{1,2}(B_{4R_*}(z)),
	\quad
	p\in W^{1,\frac n{n-1}}(B_{4R_*}(z)),
	\quad
	H\in L^2(B_{4R_*}(z)), \quad H_{+}\in L^q(B_{4R_*}(z)), \quad q>\frac{n}{2}.
	\]

	We now construct the auxiliary function. Let
	\(g\in W^{1,2}_0(\mathcal B)\) be the unique weak solution of
	\begin{equation}
		\begin{cases}
			-\Delta g=H &\text{in }\mathcal B,\\
			g=0 &\text{on }\partial\mathcal B.
		\end{cases}
		\label{eq:FR-local-g}
	\end{equation}
	The existence and uniqueness of \(g\) follow from the Lax-Milgram
	theorem. Since \(H\in L^2(\mathcal B)\), standard elliptic regularity
	gives
	\(
	g\in W^{2,2}(\mathcal B).
	\)

	Let \(G_{\mathcal B}(x,y)\) denote the Dirichlet Green function of
	\(-\Delta\) in \(\mathcal B\). Since \(\mathcal B\) is a ball,
	\[
	0\leq G_{\mathcal B}(x,y)
	\leq C|x-y|^{2-n},
	\quad
	x,y\in\mathcal B,\quad x\neq y.
	\]
	The Green representation formula gives
	\[
	g(x_0)
	=
	\int_{\mathcal B}
	G_{\mathcal B}(x_0,y)H(y)\,dy
	\]
	for almost every \(x_0\in\mathcal B\). Consequently,
	\[
	|g(x_0)|
	\leq
	C\int_{\mathcal B}
	\frac{|H(y)|}{|x_0-y|^{n-2}}\,dy.
	\]
	Since $x_0\in\mathcal B$, Lemma~\ref{lem:FR-fixed-scale-H} implies
	\begin{equation}
		\|g\|_{L^\infty(\mathcal B)}
		\leq C.
		\label{eq:FR-g-Linfty}
	\end{equation}

	We next prove the weighted estimate for \(\nabla g\). Fix
	\(x_0\in B_{R_*}(z)\). Choose
	\[
	\xi\in C_c^\infty(B_{2R_*}(x_0)),
	\quad
	0\leq\xi\leq1,
	\quad
	\xi\equiv1\text{ on }B_{R_*}(x_0),
	\]
	such that
	\[
	|\nabla\xi|\leq CR_*^{-1},
	\quad
	|D^2\xi|\leq CR_*^{-2}.
	\]
	For \(h>0\), define
	\[
	\Phi_h(x):=(r^2+h^2)^{(2-n)/2},
	\quad
	r=|x-x_0|.
	\]
	A direct calculation gives
	\begin{equation}
		-\Delta\Phi_h
		=
		n(n-2)h^2(r^2+h^2)^{-(n+2)/2}
		\geq0.
		\label{eq:FR-Phi-laplacian}
	\end{equation}

	Using \(\xi^2\Phi_h g\) as a test function in
	\eqref{eq:FR-local-g}, we obtain
	\begin{equation}
		\int_{\mathcal B}
		|\nabla g|^2\xi^2\Phi_h\,dx
		=
		\int_{\mathcal B}
		Hg\xi^2\Phi_h\,dx
		+\frac12
		\int_{\mathcal B}
		g^2\Delta(\xi^2\Phi_h)\,dx.
		\label{eq:FR-g-weighted-test}
	\end{equation}
	Since
	\[
	\Delta(\xi^2\Phi_h)
	=
	\xi^2\Delta\Phi_h
	+2\nabla(\xi^2)\cdot\nabla\Phi_h
	+\Phi_h\Delta(\xi^2),
	\]
	and the first term on the right-hand side is nonpositive, we obtain
	\begin{align}
		\int_{\mathcal B}
		|\nabla g|^2\xi^2\Phi_h\,dx
		\leq
		\int_{\mathcal B}
		|H|\,|g|\,\xi^2\Phi_h\,dx
		+
		\left|
		\int_{\mathcal B}
		g^2\nabla(\xi^2)\cdot\nabla\Phi_h\,dx
		\right|
		+
		\frac12
		\left|
		\int_{\mathcal B}
		g^2\Phi_h\Delta(\xi^2)\,dx
		\right|.
		\label{eq:FR-g-weighted-estimate}
	\end{align}

	By \eqref{eq:FR-g-Linfty} and
	\eqref{eq:FR-fixed-scale-H},
	\[
	\begin{aligned}
		\int_{\mathcal B}
		|H|\,|g|\,\xi^2\Phi_h\,dx
		\leq
		\|g\|_{L^\infty(\mathcal B)}
		\int_{\mathcal B}|H|\Phi_h\,dx
		\leq C.
	\end{aligned}
	\]
	Moreover,
	\[
	\operatorname{supp}\nabla\xi
	\cup
	\operatorname{supp}\Delta\xi
	\subset
	B_{2R_*}(x_0)\setminus B_{R_*}(x_0).
	\]
	Using \eqref{eq:FR-g-Linfty}, we obtain
	\[
	\begin{aligned}
		\left|
		\int_{\mathcal B}
		g^2\nabla(\xi^2)\cdot\nabla\Phi_h\,dx
		\right|
		+
		\left|
		\int_{\mathcal B}
		g^2\Phi_h\Delta(\xi^2)\,dx
		\right|\leq
		C(R_*)\|g\|_{L^\infty(\mathcal B)}^2
		\leq C.
	\end{aligned}
	\]
	Consequently,
	\[
	\int_{\mathcal B}
	|\nabla g|^2\xi^2\Phi_h\,dx
	\leq C,
	\]
	uniformly in \(h\) and \(x_0\). Letting \(h\downarrow0\) and using
	Fatou's lemma gives
	\[
	\int_{B_{R_*}(x_0)}
	\frac{|\nabla g(x)|^2}{|x-x_0|^{n-2}}\,dx
	\leq C.
	\]
	Taking the supremum over \(x_0\in B_{R_*}(z)\), we conclude that
	\begin{equation}
		\|g\|_{L^\infty(B_{2R_*}(z))}
		+
		\sup_{x_0\in B_{R_*}(z)}
		\int_{B_{R_*}(x_0)}
		\frac{|\nabla g(x)|^2}{|x-x_0|^{n-2}}\,dx
		\leq C.
		\label{eq:FR-g-potential}
	\end{equation}

	{\it Step 2. Regularized weights.}
		From now on, fix
		\(
		x_0\in B_{R_*}(z),
		\quad
		0<R<\frac{R_*}{4},
		\) \quad
		and write
		\[
		T_R(x_0):=B_{2R}(x_0)\setminus B_R(x_0), \quad r=|x-x_0|.
		\]
		Choose
		\[
		\eta\in C_c^\infty(B_{2R}(x_0)),
		\quad
		0\leq\eta\leq1,
		\quad
		\eta\equiv1\quad\text{on }B_R(x_0),
		\]
		such that
		\begin{equation}
			|\nabla\eta|\leq\frac{C}{R},
			\quad
			|D^2\eta|\leq\frac{C}{R^2}.
			\label{eq:FR-cutoff-bounds}
		\end{equation}
		For \(0<h<R\), define
		\[
		w_h(x):=(r^2+h^2)^{-(n-4)/2}.
		\]
		A direct calculation gives
		\begin{equation}
		\nabla w_h
		=
		-(n-4)
		\frac{x-x_0}{(r^2+h^2)^{(n-2)/2}}
			\label{eq:FR-weight-gradient}
		\end{equation}
		and
		\begin{equation}
			-\Delta w_h
			=
			(n-4)
			\frac{2r^2+nh^2}{(r^2+h^2)^{n/2}}.
			\label{eq:FR-weight-laplacian}
		\end{equation}
		In particular,
		\begin{equation}
			2(n-4)(r^2+h^2)^{-(n-2)/2}
			\leq-\Delta w_h
			\leq
			n(n-4)(r^2+h^2)^{-(n-2)/2}.
			\label{eq:FR-weight-laplacian-bounds}
		\end{equation}
		We also have
		\begin{equation}
			|\nabla w_h|
			\leq
			C(r^2+h^2)^{-(n-3)/2},
			\quad
			|D^2w_h|
			\leq
			C(r^2+h^2)^{-(n-2)/2},
			\label{eq:FR-weight-derivative-bounds}
		\end{equation}
		and
		\begin{equation}
			\frac{|\nabla w_h|^2}{w_h}
			\leq
			C(r^2+h^2)^{-(n-2)/2}.
			\label{eq:FR-weight-square}
		\end{equation}

		{\it Step 3. The weighted velocity estimate.}
		We use $
		\varphi=\eta^2w_h
		$
		in \eqref{eq:FR-local-NS-inequality}. This gives
		\begin{equation}
			\int_{B_{2R}(x_0)} |\nabla u|^2\eta^2w_h\,dx
			+
			\int_{B_{2R}(x_0)} \partial_j u_i\,u_i
			\partial_j(\eta^2w_h)\,dx
			\leq
			\int_{B_{2R}(x_0)} H\,u\cdot\nabla(\eta^2w_h)\,dx.
			\label{eq:FR-velocity-test-1}
		\end{equation}

		Integration by parts yields
		\[
		\int_{B_{2R}(x_0)} \partial_j u_i\,u_i
		\partial_j(\eta^2w_h)\,dx
		=
		-\frac12
		\int_{B_{2R}(x_0)} |u|^2\Delta(\eta^2w_h)\,dx.
		\]
		Consequently,
		\begin{equation}
			\int_{B_{2R}(x_0)} |\nabla u|^2\eta^2w_h\,dx
			-
			\frac12\int_{B_{2R}(x_0)} |u|^2\Delta(\eta^2w_h)\,dx
			\leq
			\int_{B_{2R}(x_0)} H\,u\cdot\nabla(\eta^2w_h)\,dx.
			\label{eq:FR-velocity-test-2}
		\end{equation}

		We expand
		\[
		-\Delta(\eta^2w_h)
		=
		-\eta^2\Delta w_h
		-2\nabla(\eta^2)\cdot\nabla w_h
		-w_h\Delta(\eta^2).
		\]
		By \eqref{eq:FR-weight-laplacian-bounds},
		\begin{equation}
			-\frac12
			\int_{B_{2R}(x_0)} |u|^2\eta^2\Delta w_h\,dx
			\geq
			c_n
			\int_{B_{2R}(x_0)} |u|^2\eta^2
			(r^2+h^2)^{-(n-2)/2}\,dx.
			\label{eq:FR-u-positive-term}
		\end{equation}
		Using
		\eqref{eq:FR-cutoff-bounds} and
		\eqref{eq:FR-weight-derivative-bounds}, we find
		\begin{equation}
			\begin{aligned}
				\left|
				\int_{B_{2R}(x_0)} |u|^2\nabla(\eta^2)\cdot\nabla w_h\,dx
				\right|
				+
				\left|
				\int_{B_{2R}(x_0)} |u|^2w_h\Delta(\eta^2)\,dx
				\right|
				\leq
				C\int_{T_R(x_0)}
				\frac{|u|^2}{(r^2+h^2)^{(n-2)/2}}\,dx.
			\end{aligned}
			\label{eq:FR-u-annular-errors}
		\end{equation}

		Using \(H=-\Delta g\), the right-hand side of
		\eqref{eq:FR-velocity-test-2} becomes
		\[
		I_h
		:=
		-\int_{B_{2R}(x_0)} \Delta g\,
		u_i\partial_i(\eta^2w_h)\,dx.
		\]
		Integration by parts gives
		\begin{equation}
			\begin{aligned}
				I_h
				=
				\int_{B_{2R}(x_0)} \partial_k g\,\partial_k u_i\,
				\partial_i(\eta^2w_h)\,dx +
				\int_{B_{2R}(x_0)} \partial_k g\,u_i\,
				\partial_{ki}(\eta^2w_h)\,dx
				=:I_{h,1}+I_{h,2}.
			\end{aligned}
			\label{eq:FR-Ih-decomposition}
		\end{equation}

		By Young's inequality and
		\eqref{eq:FR-weight-square},
		\begin{equation}
			\begin{aligned}
				|I_{h,1}|
				\leq
				\varepsilon
				\int_{B_{2R}(x_0)} |\nabla u|^2\eta^2w_h\,dx+
				C_\varepsilon
				\int_{B_{2R}(x_0)}
				\frac{|\nabla g|^2}
				{(r^2+h^2)^{(n-2)/2}}\,dx.
			\end{aligned}
			\label{eq:FR-Ih1}
		\end{equation}

		To estimate \(I_{h,2}\), we expand
		\[
		D^2(\eta^2w_h)
		=
		\eta^2D^2w_h
		+
		\nabla(\eta^2)\otimes\nabla w_h
		+\nabla w_h\otimes\nabla(\eta^2)
		+
		w_hD^2(\eta^2).
		\]
		Using \eqref{eq:FR-cutoff-bounds},
		\eqref{eq:FR-weight-derivative-bounds}, and Young's inequality, we
		obtain
		\begin{equation}
			\begin{aligned}
				|I_{h,2}|
				\leq & \varepsilon
				\int_{B_{2R}(x_0)} |u|^2\eta^2
				(r^2+h^2)^{-(n-2)/2}\,dx
				+
				C_\varepsilon
				\int_{B_{2R}(x_0)}
				\frac{|\nabla g|^2}
				{(r^2+h^2)^{(n-2)/2}}\,dx\\
		&\quad	+
				C_\varepsilon
				\int_{T_R(x_0)}
				\frac{|u|^2+|\nabla g|^2}
				{(r^2+h^2)^{(n-2)/2}}\,dx.
			\end{aligned}
			\label{eq:FR-Ih2}
		\end{equation}

		Choosing \(\varepsilon>0\) sufficiently small and absorbing the
		corresponding terms into the left-hand side of
		\eqref{eq:FR-velocity-test-2}, we conclude that
		\begin{equation}
			\begin{aligned}
				&\int_{B_R(x_0)}
				\frac{|\nabla u|^2}
				{(r^2+h^2)^{(n-4)/2}}\,dx
				+
				\int_{B_R(x_0)}
				\frac{|u|^2}
				{(r^2+h^2)^{(n-2)/2}}\,dx
				\\
				&\quad\leq
				K_1\int_{T_R(x_0)}
				\frac{|u|^2}
				{(r^2+h^2)^{(n-2)/2}}\,dx
				+
				K_1\int_{B_{2R}(x_0)}
				\frac{|\nabla g|^2}
				{(r^2+h^2)^{(n-2)/2}}\,dx.
			\end{aligned}
			\label{eq:FR-velocity-weighted-h}
		\end{equation}
		The right-hand side is uniformly bounded as $h\downarrow0$:
		the annular weight is bounded, and the weighted $\nabla g$ integral
		is controlled by \eqref{eq:FR-g-potential}, since $2R<R_*$.
		Monotone convergence therefore yields
		\begin{equation}
			\begin{aligned}
				&\int_{B_R(x_0)}
				\frac{|\nabla u|^2}{r^{n-4}}\,dx
				+
				\int_{B_R(x_0)}
				\frac{|u|^2}{r^{n-2}}\,dx
				\\
				&\quad\leq
				K_1\int_{T_R(x_0)}
				\frac{|u|^2}{r^{n-2}}\,dx
				+
				K_1\int_{B_{2R}(x_0)}
				\frac{|\nabla g|^2}{r^{n-2}}\,dx.
			\end{aligned}
			\label{eq:FR-velocity-weighted}
		\end{equation}
		Note that $K_1$ is independent of $R$ and $x_0$.

	{\it Step 4. The weighted estimate for the auxiliary function.}
		Set
		\[
		g_R:=\fint_{T_R(x_0)}g(x)\,dx
		\]
		and define
		\[
		v_h(x):=(r^2+h^2)^{-(n-2)/2}.
		\]
		We use
		\[
		\psi_h=(g-g_R)\eta^2v_h
		\]
		as a test function in \eqref{eq:FR-local-g}. We obtain
		\begin{equation}
			\begin{aligned}
				\int_{B_{2R}(x_0)} |\nabla g|^2\eta^2v_h\,dx
				=&
				-2\int_{B_{2R}(x_0)}
				(g-g_R)\eta v_h\nabla g\cdot\nabla\eta\,dx
				\\
				&-
				\int_{B_{2R}(x_0)}
				(g-g_R)\eta^2\nabla g\cdot\nabla v_h\,dx+
				\int_{B_{2R}(x_0)}
				H(g-g_R)\eta^2v_h\,dx.
			\end{aligned}
			\label{eq:FR-g-test}
		\end{equation}

		For the middle term, integration by parts gives
		\begin{equation}
			\begin{aligned}
				-\int_{B_{2R}(x_0)}
				(g-g_R)\eta^2\nabla g\cdot\nabla v_h\,dx
				=
				\frac12\int_{B_{2R}(x_0)}
				|g-g_R|^2\eta^2\Delta v_h\,dx
				+
				\int_{B_{2R}(x_0)}
				|g-g_R|^2\eta\nabla\eta\cdot\nabla v_h\,dx.
			\end{aligned}
			\label{eq:FR-g-middle-term}
		\end{equation}
		A direct computation gives
		\[
		\Delta v_h
		=
		-n(n-2)h^2(r^2+h^2)^{-(n+2)/2}
		\leq0.
		\]
		The first term on the right-hand side of
		\eqref{eq:FR-g-middle-term} is therefore nonpositive and may be
		discarded.

		Using Young's inequality together with the Poincar\'e inequality on
		the annulus,
		\[
		\int_{T_R(x_0)}|g-g_R|^2\,dx
		\leq
		CR^2\int_{T_R(x_0)}|\nabla g|^2\,dx,
		\]
		we obtain
		\begin{equation}
			\begin{aligned}
				&\left|
				-2\int_{B_{2R}(x_0)}
				(g-g_R)\eta v_h\nabla g\cdot\nabla\eta\,dx
				\right|
			+
				\left|
				\int_{B_{2R}(x_0)}
				|g-g_R|^2\eta\nabla\eta\cdot\nabla v_h\,dx
				\right|
				\\
				&\quad\leq
				\frac14
				\int_{B_{2R}(x_0)} |\nabla g|^2\eta^2v_h\,dx
				+
				C\int_{T_R(x_0)}
				\frac{|\nabla g|^2}
				{(r^2+h^2)^{(n-2)/2}}\,dx.
			\end{aligned}
			\label{eq:FR-g-annular}
		\end{equation}
		The last term in \eqref{eq:FR-g-test} is estimated using
		\eqref{eq:FR-g-Linfty} and Lemma~\ref{lem:FR-fixed-scale-H}:
		\begin{equation}
			\left|\int_{B_{2R}(x_0)}
			H(g-g_R)\eta^2v_h\,dx \right|
			\leq
			C \|g\|_{L^\infty(B_{3R_*/2 }(z))} \int_{B_{2R}(x_0)}
			|H| v_h\,dx \leq C\int_{B_{2R}(x_0)}
			|H| v_h\,dx.
			\label{eq:FR-g-H-term}
		\end{equation}
		It follows from
		\eqref{eq:FR-g-test}-\eqref{eq:FR-g-H-term} that
		\begin{equation}
			\begin{aligned}
				\int_{B_R(x_0)}
				\frac{|\nabla g|^2}
				{(r^2+h^2)^{(n-2)/2}}\,dx
				\leq
				K_2\int_{B_{2R}(x_0)}
				\frac{|H|}
				{(r^2+h^2)^{(n-2)/2}}\,dx
				+
				K_2\int_{T_R(x_0)}
				\frac{|\nabla g|^2}
				{(r^2+h^2)^{(n-2)/2}}\,dx.
			\end{aligned}
			\label{eq:FR-g-weighted-h}
		\end{equation}
		Letting \(h\downarrow0\) and using monotone convergence on all
		nonnegative weighted terms, we obtain
		\begin{equation}
			\begin{aligned}
				\int_{B_R(x_0)}
				\frac{|\nabla g|^2}{r^{n-2}}\,dx
				\leq
				K_2\int_{B_{2R}(x_0)}
				\frac{|H|}{r^{n-2}}\,dx
				+
				K_2\int_{T_R(x_0)}
				\frac{|\nabla g|^2}{r^{n-2}}\,dx.
			\end{aligned}
			\label{eq:FR-g-weighted}
		\end{equation}
		Note that $K_2$ is independent of $R$ and $x_0$.

		{\it Step 5. The weighted head-pressure estimate.}
	Let \(\psi = \eta^2 w_h.\)  We use $\nabla\psi$ as a test function for
	the Navier-Stokes equations.  A direct computation using the formulas for
	$D^2w_h$ and $\Delta w_h$ gives
 \begin{equation}
 	\begin{aligned}
 		& -2(n-4) \int_{B_{2R}(x_0)} \frac{H \eta^2}{(r^2+h^2)^{(n-2)/2}}\, dx + (n-4)(n-2) \int_{B_{2R}(x_0)} \frac{|u\cdot(x-x_0)|^2 \eta^2}{(r^2+h^2)^{n/2}} \, dx\\
 		& \quad  - (n-4)(n-2) \int_{B_{2R}(x_0)} \frac{p\eta^2 h^2}{(r^2+h^2)^{n/2}}\, dx \\
 		& \quad = 4(n-4) \int_{B_{2R}(x_0)} \frac{p \eta \nabla \eta \cdot (x-x_0) }{(r^2+h^2)^{(n-2)/2}} \, dx  - 2\int_{B_{2R}(x_0)} \frac{p|\nabla \eta|^2}{(r^2+h^2)^{(n-4)/2}}  \, dx \\
 		& \quad - 2\int_{B_{2R}(x_0)}
 		\frac{p\eta \Delta \eta }{(r^2+h^2)^{(n-4)/2}}\, dx
 	 -\int_{B_{2R}(x_0)} u_i u_j \left[\frac{2\partial_i \eta\partial_j \eta + 2\eta \partial_{ij}\eta }{(r^2+h^2)^{(n-4)/2}} \right]\, dx \\
 		& \quad + 4(n-4)\int_{B_{2R}(x_0)} \frac{u\cdot(x-x_0)  u_j   \partial_j \eta \eta }{ (r^2+h^2)^{(n-2)/2}}\, dx \\
 		&\quad =: J_1 +\cdots + J_5.
 			\end{aligned}
 \end{equation}
	Using $|H|=2H_{+}-H$ and $p=H-\frac12|u|^2$, we obtain the following
	identity.  
	\begin{equation}
		\begin{aligned}
		&2(n-4) \int_{B_{2R}(x_0)} \frac{|H| \eta^2}{(r^2+h^2)^{(n-2)/2}}\, dx + (n-4)(n-2) \int_{B_{2R}(x_0)} \frac{|u\cdot(x-x_0)|^2 \eta^2}{(r^2+h^2)^{n/2}} \, dx\\
		&\quad + \frac{(n-4)(n-2)}{2} \int_{B_{2R}(x_0)} \frac{|u|^2 \eta^2 h^2 }{(r^2 +h^2)^{n/2}} \, dx  + (n-4)(n-2) \int_{B_{2R}(x_0)}\frac{|H| \eta^2 h^2}{(r^2+ h^2)^{n/2}}\, dx 		\\
		&\quad =J_1 + \cdots + J_5 + 4(n-4) \int_{B_{2R}(x_0)} \frac{H_{+} \eta^2}{(r^2+h^2)^{(n-2)/2}} \, dx \\
		&\quad + 2(n-4)(n-2) \int_{B_{2R}(x_0)} \frac{H_{+} \eta^2 h^2}{(r^2 + h^2)^{n/2}}\, dx =: J_1 +\cdots +J_7.
		\end{aligned}
	\end{equation}
	Here $J_1,\ldots,J_5$ denote the five cutoff terms from the preceding
	identity, while $J_6$ and $J_7$ are the two terms containing $H_+$.
	For the integrals on the right-hand side we have
	\begin{equation*}
		|J_1|+|J_2|+|J_3| \leq C\int_{T_R(x_0)} \frac{|p|}{r^{n-2}} \, dx \leq C \int_{T_R(x_0)} \frac{|H|}{r^{n-2}} \, dx + C\int_{T_R(x_0)} \frac{|u|^2}{r^{n-2}}\, dx .
	\end{equation*}
	\begin{equation*}
		|J_4|+|J_5| \leq C \int_{T_R(x_0)} \frac{|u|^2}{r^{n-2}}\, dx .
	\end{equation*}
	\begin{equation*}
		|J_6| +|J_7| \leq C \|H_{+}\|_{L^q(B_{2R}(x_0))} R^{2-\frac{n}{q}} \leq C R^{2-\frac{n}{q}} .
	\end{equation*}
	Consequently,
	\begin{equation}\label{eq:FR-H-weighted-1}
		\begin{aligned}
		&	\int_{B_R(x_0)} \frac{|H|}{(r^2 + h^2)^{(n-2)/2}} \, dx +
			\int_{B_R(x_0)} \frac{|u\cdot (x-x_0)|^2}{(r^2+h^2)^{n/2}} \, dx \\
			&\quad \leq K_3 \int_{T_R(x_0)} \frac{|H|}{r^{n-2}} \, dx + K_3\int_{T_R(x_0)} \frac{|u|^2}{r^{n-2}}\, dx + K_3  R^{2-\frac{n}{q}}.
		\end{aligned}
	\end{equation}
	Letting \(h\downarrow0\), the left-hand side converges monotonically,
	while the terms on the annulus $T_R(x_0)$ are handled by dominated
	convergence (there $r\simeq R$).  We obtain
\begin{equation}
	\begin{aligned}
		&\int_{B_R(x_0)}
		\frac{|H|}{r^{n-2}}\,dx
		+
		\int_{B_R(x_0)}
		\frac{|u\cdot (x-x_0)|^2}{r^{n}}\,dx
		\\
		&\quad\leq
		K_3\int_{T_R(x_0)}
		\frac{|H| }{r^{n-2}}\,dx
		+ K_3 \int_{T_R(x_0)}
		\frac{|u|^2}{r^{n-2}}\,dx +  K_3  R^{2-\frac{n}{q}}.
	\end{aligned}
	\label{eq:FR-H-weighted}
\end{equation}
Note that $K_3$ is independent of $R$ and $x_0$.

	{\it Step 6. The hole-filling inequality.}
	Set
	\[
	\gamma:=2-\frac{n}{q}>0.
	\]
	Introduce the nonnegative quantity
	\begin{equation}
		\begin{aligned}
			\mathcal E(R):={}&
			\int_{B_R(x_0)}
			\frac{|\nabla u|^2}{r^{n-4}}\,dx
			+
			\int_{B_R(x_0)}
			\frac{|u|^2}{r^{n-2}}\,dx
			+
			\int_{B_R(x_0)}
			\frac{|\nabla g|^2}{r^{n-2}}\,dx
			\\
			&+
			\int_{B_R(x_0)}
			\frac{|H|}{r^{n-2}}\,dx
			+
			\int_{B_R(x_0)}
			\frac{|u\cdot(x-x_0)|^2}{r^n}\,dx.
		\end{aligned}
		\label{eq:FR-total-energy}
	\end{equation}
	Before taking differences, note that $\mathcal E(2R)<\infty$. 
	Indeed, \eqref{eq:FR-velocity-weighted} at the fixed radius
	$R_*/8$, together with \eqref{eq:FR-g-potential}, gives finite
	weighted velocity integrals near $x_0$; outside that ball the weights
	are bounded and $u\in W^{1,2}$.  The remaining terms are finite by
	\eqref{eq:FR-fixed-scale-H}, \eqref{eq:FR-g-potential}, and
	$|u\cdot(x-x_0)|^2/r^n\leq |u|^2/r^{n-2}$.

	For clarity, split
	\[
	 \mathcal E(R)=U(R)+G_0(R)+P(R),
	\]
	where $U$ is the sum of the first two terms in
	\eqref{eq:FR-total-energy}, $G_0$ is the third term, and $P$ is the sum
	of the last two terms.  Since every integrand is nonnegative, the three
	weighted estimates imply
	\begin{align}
	 U(R)&\le K_1\bigl(U(2R)-U(R)\bigr)+K_1G_0(2R),
	 \label{eq:FR-hole-U}\\
	 G_0(R)&\le K_2P(2R)+K_2\bigl(G_0(2R)-G_0(R)\bigr),
	 \label{eq:FR-hole-G}\\
	 P(R)&\le K_3\bigl(P(2R)-P(R)\bigr)
	 +K_3\bigl(U(2R)-U(R)\bigr)+K_3R^\gamma.
	 \label{eq:FR-hole-P}
	\end{align}
	Here, for example, the annular $|u|^2$ term in
	\eqref{eq:FR-velocity-weighted} is bounded by
	$U(2R)-U(R)$, and the $|H|$ term over $B_{2R}$ in
	\eqref{eq:FR-g-weighted} is bounded by $P(2R)$.
	Now multiply \eqref{eq:FR-hole-G} by $K_1+1$ and
	\eqref{eq:FR-hole-P} by
	\[
	 (K_1+1)K_2+1,
	\]
	and add the resulting inequalities to \eqref{eq:FR-hole-U}.  On the
	right-hand side write
	$G_0(2R)=G_0(R)+[G_0(2R)-G_0(R)]$ and
	$P(2R)=P(R)+[P(2R)-P(R)]$.  Moving the resulting $K_1G_0(R)$ and
	$(K_1+1)K_2P(R)$ terms to the left leaves exactly
	\[
	 U(R)+G_0(R)+P(R)=\mathcal E(R).
	\]
	All remaining terms are nonnegative multiples of components of
	$\mathcal E(2R)-\mathcal E(R)$, together with the error $CR^\gamma$.
	Thus, after enlarging a constant $K_4\ge1$ independent of $x_0$ and $R$,
	we obtain
	\begin{equation}
	 \mathcal E(R)
	 \le K_4\bigl(\mathcal E(2R)-\mathcal E(R)\bigr)+K_4R^\gamma.
	 \label{eq:FR-hole-filling-1}
	\end{equation}
	Hence
	\begin{equation}
		\mathcal E(R)
		\leq
		\theta\mathcal E(2R)
		+
		CR^\gamma,
		\quad
		\theta:=\frac{K_4}{1+K_4}\in(0,1).
		\label{eq:FR-hole-filling-2}
	\end{equation}

	We first obtain a uniform bound at a fixed intermediate scale.
	Set
	\[
	R_0:=\frac{R_*}{8}.
	\]
	We claim that
	\begin{equation}
		\sup_{\substack{x_0\in B_{R_*}(z)\\
				R_0\leq S<2R_0}}
		\mathcal E(S)\leq C.
		\label{eq:FR-terminal-scale}
	\end{equation}
	Indeed, since \(S<2R_0=R_*/4\), the estimate
	\eqref{eq:FR-velocity-weighted} applies with \(R=S\). Moreover,
	on the annulus \(T_S(x_0)\) we have \(r\geq S\geq R_0\), and hence
	\[
	\int_{T_S(x_0)}
	\frac{|u|^2}{r^{n-2}}\,dx
	\leq
	R_0^{2-n}
	\|u\|_{L^2(B_{4R_*}(z))}^2
	\leq C.
	\]
	By \eqref{eq:FR-g-potential},
	\[
	\int_{B_{2S}(x_0)}
	\frac{|\nabla g|^2}{r^{n-2}}\,dx
	\leq C.
	\]
	Therefore \eqref{eq:FR-velocity-weighted} gives
	\[
	\int_{B_S(x_0)}
	\frac{|\nabla u|^2}{r^{n-4}}\,dx
	+
	\int_{B_S(x_0)}
	\frac{|u|^2}{r^{n-2}}\,dx
	\leq C.
	\]
	The remaining velocity component of $\mathcal E(S)$ is controlled
	directly by the second term above, since
	\[
	 \frac{|u\cdot(x-x_0)|^2}{r^n}
	 \le \frac{|u|^2r^2}{r^n}
	 =\frac{|u|^2}{r^{n-2}}.
	\]
	Hence
	\[
	\int_{B_S(x_0)}
	\frac{|u\cdot (x-x_0)|^2}{r^{n}}\,dx
	\le C.
	\]
	Furthermore, Lemma~\ref{lem:FR-fixed-scale-H} and
	\eqref{eq:FR-g-potential} imply
	\[
	\int_{B_S(x_0)}
	\frac{|H|}{r^{n-2}}\,dx
	+
	\int_{B_S(x_0)}
	\frac{|\nabla g|^2}{r^{n-2}}\,dx
	\leq C.
	\]
	Thus \eqref{eq:FR-terminal-scale} follows.

	Choose
	\[
	0<\beta<
	\min\left\{
	\gamma,-\log_2\theta,1
	\right\}.
	\]
	Let \(0<R<R_0\), and choose the smallest integer \(m\geq1\)
	such that
	\[
	2^mR\geq R_0.
	\]
	Then
	\[
	R_0\leq 2^mR<2R_0=\frac{R_*}{4}.
	\]
	Iterating \eqref{eq:FR-hole-filling-2} at the radii
	\[
	R,\ 2R,\ \ldots,\ 2^{m-1}R
	\]
	gives
	\begin{equation}
		\mathcal E(R)
		\leq
		\theta^m\mathcal E(2^mR)
		+
		C\sum_{j=0}^{m-1}
		\theta^j(2^jR)^\gamma.
		\label{eq:FR-hole-filling-iteration}
	\end{equation}
	Since
	\[
	\beta<-\log_2\theta,
	\]
	we have
	\[
	\theta\,2^\beta<1.
	\]
	Also, by the minimality of \(m\),
	\[
	2^mR\geq R_0,
	\]
	and therefore
	\[
	\theta^m
	\leq 2^{-m\beta}
	\leq
	\left(\frac{R}{R_0}\right)^\beta.
	\]
	Using \eqref{eq:FR-terminal-scale}, we conclude that
	\[
	\theta^m\mathcal E(2^mR)
	\leq
	C\left(\frac{R}{R_0}\right)^\beta.
	\]
	For the error term, since \(2^jR<2R_0\) and
	\(\gamma>\beta\),
	\[
	(2^jR)^\gamma
	=
	(2^jR)^\beta(2^jR)^{\gamma-\beta}
	\leq
	(2R_0)^{\gamma-\beta}
	R^\beta 2^{j\beta}.
	\]
	Hence
	\[
	\sum_{j=0}^{m-1}
	\theta^j(2^jR)^\gamma
	\leq
	C R^\beta
	\sum_{j=0}^{m-1}
	(\theta2^\beta)^j
	\leq
	CR^\beta.
	\]
	It follows from \eqref{eq:FR-hole-filling-iteration} that
	\[
	\mathcal E(R)\leq CR^\beta,
	\quad
	0<R<R_0.
	\]
	For \(R_0\leq R<R_*/4\), the same fixed-scale estimates used in
	\eqref{eq:FR-terminal-scale} give
	\[
	\mathcal E(R)\leq C.
	\]
	Since \(R\geq R_0\), after enlarging the constant if necessary,
	\[
	\mathcal E(R)\leq CR^\beta.
	\]
	Consequently,
	\[
	\mathcal E(R)\leq CR^\beta,
	\quad
	0<R<\frac{R_*}{4},
	\]
	uniformly for \(x_0\in B_{R_*}(z)\).
	In particular,
	\[
	\int_{B_R(x_0)}
	\frac{|\nabla u(x)|^2}
	{|x-x_0|^{n-4}}\,dx
	\leq CR^\beta,
	\]
	which is exactly
	\eqref{eq:FR-interior-weighted-decay}.
	This completes the proof of
	Proposition~\ref{prop:FR-interior-weighted}.
\end{proof}

        \medskip

{\bf Acknowledgements.}
The research of Gui is supported by  NSFC Key Program (Grant No. 12531010),  University of Macau research grants CPG2024-00016-FST, CPG2025-00032-FST, CPG2026-00027-FST, SRG2023-00011-FST, MYRG-GRG2023-00139-FST-UMDF, UMDF Professorial Fellowship of Mathematics, Macao SAR FDCT 0003/2023/RIA1 and  Macao SAR FDCT 0024/2023/RIB1.
The research of Wang is partially supported by NSFC Grant 12671279, the Natural Science Foundation of Jiangsu Province (Grant~BK20240147), and the Jiangsu Provincial Scientific Research Center of Applied Mathematics (No.~BK20233002).
The research of Xie is partially supported by NSFC Grants 12571238 and 12426203.

\end{document}